\tikzset{cross/.style={cross out, draw=red, minimum size=2*(#1-\pgflinewidth), inner sep=0pt, outer sep=0pt},
cross/.default={1pt}}
\lstdefinestyle{mystyle}{
    backgroundcolor=\color{backcolour},   
    commentstyle=\color{codegreen},
    keywordstyle=\color{magenta},
    numberstyle=\tiny\color{codegray},
    stringstyle=\color{codepurple},
    basicstyle=\ttfamily\footnotesize,
    breakatwhitespace=false,         
    breaklines=true,                 
    captionpos=b,                    
    keepspaces=true,                 
    numbers=left,                    
    numbersep=5pt,                  
    showspaces=false,                
    showstringspaces=false,
    showtabs=false,                  
    tabsize=2
}
\newtheorem{theorem}{Theorem}[section]
\newtheorem{lemma}[theorem]{Lemma}
\newtheorem{proposition}[theorem]{Proposition}
\newtheorem{corollary}[theorem]{Corollary}
\theoremstyle{definition}
\theoremstyle{remark}
\numberwithin{equation}{section}
\renewcommand{\assumption}[2]{\textit{Assumption}\hspace{1pt}#1: #2}
\newcommand{\citelink}[2]{\hyperlink{cite.\therefsection @#1}{\textcolor{blue}{#2}}}
\newcommand{\dd}{\mathop{}\mathopen{}\mathrm{d}}
\newcommand{\cercle}{\mathbb{S}^1}
\newcommand{\Inte}{\int_{\cercle}}
\newcommand{\argmin}[1]{\underset{#1}{\mathrm{argmin}}}
\newcommand{\indices}{\lambda\in \Lambda_m}
\newcommand{\philambda}{\varphi_{\lambda}}
\newcommand{\Proba}{\mathbb P}
\newcommand{\Esp}{\mathbb E}
\DeclareMathOperator{\Var}{Var}
\renewcommand{\tocsection}[3]{%
  \indentlabel{\@ifnotempty{#2}{\bfseries\ignorespaces#1 #2\quad}}\bfseries#3}
\renewcommand{\tocsubsection}[3]{%
  \indentlabel{%
    \hbox to 3em{#2\hfil}%
  }#3%
}
\newcommand\@dotsep{4.5}
\def\@tocline#1#2#3#4#5#6#7{\relax
  \ifnum #1>\c@tocdepth 
  \else
    \par \addpenalty\@secpenalty\addvspace{#2}%
    \begingroup \hyphenpenalty\@M
    \@ifempty{#4}{%
      \@tempdima\csname r@tocindent\number#1\endcsname\relax
    }{%
      \@tempdima#4\relax
    }%
    \parindent\z@ \leftskip#3\relax \advance\leftskip\@tempdima\relax
    \rightskip\@pnumwidth plus1em \parfillskip-\@pnumwidth
    #5\leavevmode\hskip-\@tempdima{#6}\nobreak
    \leaders\hbox{$\m@th\mkern \@dotsep mu\hbox{.}\mkern \@dotsep mu$}\hfill
    \nobreak
    \hbox to\@pnumwidth{\@tocpagenum{\ifnum#1=1\bfseries\fi#7}}\par
    \nobreak
    \endgroup
  \fi}
\renewcommand\csname r@tocindent0\endcsname{0pt}
\def\l@subsection{\@tocline{2}{0pt}{2.5pc}{5pc}{}}
\begin{document}

\setcounter{page}{1}

\centerline{}

\centerline{}


\title[Nonparametric estimation for censored circular data]{Nonparametric estimation for censored circular data}

\author[N. Conanec]{Nicolas Conanec}

\address{ LAGA (UMR 7539), Universit\'e Sorbonne Paris Nord, Institut Galil\'ee, 99 avenue Jean-Baptiste Clement, 93430 Villetaneuse, France.}
\email{\textcolor[rgb]{0.00,0.00,0.84}{conanec@math.univ-paris13.fr}}


\begin{abstract}
We study the problem of estimating the probability density function of a circular random variable subject to censoring. To this end, we propose a fully computable quotient estimator that combines a projection estimator on linear sieves with a method-of-moments approach. We derive an upper bound for its mean integrated squared error and establish convergence rates when the underlying density lies in a Sobolev class. The practical performance of the estimator is illustrated through simulated examples.
\newline
\newline
\noindent \textit{Keywords.} Adaptive estimation, Censoring model, Directional data, \\ Nonparametric estimator, Penalized contrast
\newline
\noindent \textit{2020 Mathematics Subject Classification.} Primary 62G07; Secondary 62N01, 62H11
\end{abstract} \maketitle

\tableofcontents

\section{Introduction}\setcounter{equation}{0}

\vphantom{ \cite{Matthews} \cite{Curray} \cite{Anderson-Wu} \cite{MardiaJupp} \cite{Jammalamadaka-Book} \cite{Ley-Verdebout} \cite{Kaplan-Meier} \cite{Turnbull-1974} \cite{Jammalamadaka-2009} \cite{Jammalamadaka-2003} \cite{Efron} \cite{Brunel-Comte-2009} \cite{Talagrand} \cite{Klein-Rio} \cite{Birge-Massart-1998} \cite{Diamond-McDonald-1992} \cite{Wellner-1995} \cite{vanDerLaan-Jewell-1995} \cite{Bhattacharyya-1985} \cite{Sirvanci-Yang-1984} \cite{Parzezn-1962} \cite{Rosenblatt-1956} \cite{Cencov-1962} \cite{Stone-1977} \cite{Kiefer-Wolfowitz-1956} \cite{Efromovich-2021} \cite{Alotaibi-2025} \cite{Tsybakov} \cite{DiMarzio-2012} \cite{Chaubey-2022} \cite{Baudry-2012} }

Directional statistics is the branch of statistics that deals with data represented as directions. Such data arise in a variety of contexts, particularly when measurements involve instruments like gyroscopes for three-dimensional directions or clocks for time. Studying directional data is relevant to understanding natural phenomena such as animal migration (see \citelink{Matthews}{Matthews ('61)}), wind direction (see \citelink{Curray}{Curray ('56)}), daily event occurrence patterns, or automotive flywheel rotation (see \citelink{Anderson-Wu}{Anderson \& Wu ('95)}). In this paper, we focus on circular data, meaning that the phenomenon of interest, represented by the random variable $X$, takes values on the unit circle $\mathbb{S}^1$. Since such data behave differently from data on the real line, specific statistical tools and methodologies are required. The field of directional statistics has been thoroughly studied in several monographs, including \citelink{MardiaJupp}{Mardia \& Jupp (2000)}, \citelink{Jammalamadaka-Book}{Jammalamadaka \& Sengupta (2001)}, and more recently \citelink{Ley-Verdebout}{Ley \& Verdebout (2017)}.

In many applications, full observations are not available due to censoring, where only partial information about the variable of interest is observed. Various censoring models exist, each providing different levels of information. Censored data problems have been widely studied on the real line. For instance, the well-known Kaplan-Meier estimator \citelink{Kaplan-Meier}{Kaplan \& Meier ('58)} provides a nonparametric maximum likelihood estimator (NPMLE) for survival functions for censored data. This estimator has been extended in several directions: \citelink{Turnbull-1974}{Turnbull ('74)} considered doubly censored failure time data, \citelink{Wellner-1995}{Wellner ('95)} studied interval censoring case 2, and \citelink{Diamond-McDonald-1992}{Diamond \& McDonald ('92)} investigated the current status data model. Recently, \citelink{Efromovich-2021}{Efromovich (2021)} proposed a sharp minimax adaptive estimator for the cumulative distribution function in the case of missing data for the current status model, while \citelink{vanDerLaan-Jewell-1995}{van der Laan \& Jewell ('95)} generalized the model to the doubly censored current status setting. In most settings, when we observe censored data we observe the entire sample of data, known as Type I censoring. Alternatively, one may assume that only a fixed number of censored observations are available; this is referred to as Type II censoring (see \citelink{Bhattacharyya-1985}{Bhattacharyya ('85)} or more recently \citelink{Alotaibi-2025}{Alotaibi et al. (2025)}).

When it comes to censored estimation problems on the circle, most censoring models from the real line cannot be transposed in a well-posed way. To the best of our knowledge, the only well-defined model of censored data on the circle is the one introduced by \citelink{Jammalamadaka-2009}{Jammalamadaka \& Mangalam (2009)} in the context of cumulative distribution function estimation. The aim of this work is to fill the gap in the literature on nonparametric estimation for circular data under censoring.

Nonparametric density estimation is a well-studied problem on the real line; see, for example, \citelink{Tsybakov}{Tsybakov (2009)} for a review of classical techniques. On $\mathbb{S}^1$, most contributions to density estimation are parametric, and only a few are nonparametric. Among these, kernel density estimation has been the most investigated approach (see \citelink{DiMarzio-2012}{DiMarzio et al. (2012)} or \citelink{Chaubey-2022}{Chaubey (2022)}).

In this work, we study the estimation of the probability density function of a circular distribution using interval-censored observations. We rely on the model introduced by \citelink{Jammalamadaka-2009}{Jammalamadaka \& Mangalam (2009)}, which is one of the few censorship models that are well-defined on the circle and can be interpreted as an extension of the doubly censored failure time model from the real line to $\mathbb{S}^1$. In their work, Jammalamadaka and Mangalam estimated the cumulative distribution function using a self-consistent estimator, which coincides with the NPMLE. In contrast, we aim to estimate the density function $f$ using a nonparametric method. We construct an explicit quotient estimator $\hat{f} = \frac{(\hat{\psi}_m)_+}{\hat{\sigma}\vee n^{-1/2}}$, where $\hat{\sigma}$ is an estimator of a function $\sigma$ related to the censoring mechanism, and $\hat{\psi}_m$ is an estimator of the function $\psi = f \sigma$. We define $\hat{\psi}_m$ as a projection estimator onto a linear sieve space $S_m$, and estimate $\sigma$ using a method of moments. $n^{-1/2}$ is a threshold used to make sure the quotient is always well-defined. To quantify the performance of our estimator, we use the mean integrated squared error (MISE). We first derive an upper bound for the MISE of $\hat{f}$, which leads to a rate of convergence over Sobolev classes when the smoothness of $\psi$ is known. Since the smoothness of $\psi$ is typically unknown in practice, we propose a data-driven procedure to select the optimal projection level $\hat{m}$ via a penalized criterion. We show that the penalized estimator $\hat{\psi}_{\hat{m}}$ satisfies an oracle inequality, which means that our selection procedure achieves the best possible trade-off between bias and variance. Furthermore, $\hat{\psi}_{\hat{m}}$ retains the same rate of convergence as the previous estimator when estimating univariate Sobolev functions. In this sense, the estimator $\hat{\psi}_{\hat{m}}$ is adaptive, as it does not require prior knowledge of the smoothness of $\psi$. Returning to the estimation of $f$, we define $\hat{f}^* = \frac{(\hat{\psi}_{\hat{m}})_+}{\hat{\sigma}\vee n^{-1/2}}$ and prove that it also satisfies an oracle inequality. Finally, we show that $\hat{f}^*$ can be efficiently computed in practice. We evaluate its numerical performance under different degrees of censoring and compare our results with the simulations from \citelink{Jammalamadaka-2009}{Jammalamadaka \& Mangalam (2009)}. Overall, our estimator performs very satisfactorily.

This article is structured as follows. In Section~\ref{SectionModele}, we discuss specific features of the circular nature of our data, the effect of censoring on the observed information, and the construction of our estimator. Section~\ref{SectionResultats} presents the theoretical results regarding the proposed estimation procedure. In Section~\ref{SectionSimulations}, we provide simulation studies to evaluate the performance of our estimator under various scenarios. Finally, Section~\ref{SectionPreuves} contains the proofs of the main results.

\section{Model and estimators}\setcounter{equation}{0}\label{SectionModele}

\subsection{Circular context}

First, we need to address issues related to the circular nature of our data. In particular, since we are working with an interval censoring model, a natural question arises: how should one define whether a point belongs to an interval on the circle? More generally, can the usual partial order on $\mathbb{R}$ be extended to $\mathbb{S}^1$? Without further assumptions, it is impossible to say, for instance, whether the angle $\frac{\pi}{2}$ is larger than $\frac{3\pi}{2}$ in any meaningful sense. To resolve this, we adopt the following convention: we fix an initial direction, denoted as $0$, and choose an orientation on the circle. Each angle is then represented by its unique equivalent in $[0, 2\pi[$ modulo $2\pi$. Under these conventions, the circle $\cercle$ can be represented as the interval $[0, 2\pi[$, and every angle as a point on the interval. We then use the usual partial order on $[0, 2\pi[$ as our circular order. However, another issue remains: how do we define an interval between two points on $\mathbb{S}^1$? On the real line, two points define a unique interval between them. On the circle, this is no longer the case. Given two points, and depending on the orientation, there are two different arcs connecting them, corresponding to two disjoint sets whose union is the entire circle. These two sets are complementary. Therefore, from now on, whenever we write an interval $[x, y]$ on the circle, we will mean the set of points going from $x$ to $y$ in the chosen orientation. In other words, for any $(x, y)\in\left(\mathbb{S}^1\right)^2, \mathbb{S}^1 = [x, y[\sqcup [y, x[$.

Thus if we have $x > y$ two points of the interval, $[x, y]$ is the set $[0, 2\pi[ \backslash ]y, x[$, i.e $[x, y] = [x, 2\pi[\sqcup [0, y]$, and $]y, x[$ is the classic interval as we know it.\\
We choose for the rest of the article the anticlockwise orientation. In Figure~\ref{fig:intervalle_circulaire_cercle_et_r} we can see two circles with intervals $\left[ \frac{\pi}{3}, \frac{\pi}{6} \right]$ and $\left[\frac{\pi}{6}, \frac{\pi}{3} \right]$ highlighted and the same intervals on the real line.\\
Moreover on the real line we know thanks to the property of the partial order that $\mathds{1}_{\{x\in[L, U]\}} = \mathds{1}_{\{L\leq x\}} - \mathds{1}_{\{U < x\}}$. This equality is false on $\mathbb{S}^1$ since we need to know what interval we are considering. Lemma~\ref{EqCirculaire} (proof in Section~\ref{Circulaire}) tells us that, if $(x, L, U)$ is a triplet of points in $\cercle$ with $L\ne U$, then we have
\begin{align}
\label{eq:Circulaire}
\mathds{1}_{\{x\in[L, U]\}}&= \mathds{1}_{\{L\leq x\}} - \mathds{1}_{\{U < x\}} + \mathds{1}_{\{L\geq U\}}.
\end{align}

\subsection{The circular censor model}
Set $(\Omega, \mathcal{F},\Proba)$ a probability space. We work with functions in $\mathbb{L}^2(\cercle)$, endowed with the usual scalar product  \\$<g,h> = \Inte g(x) h(x) \dd x$ and the associated norm will be written $\|\cdot\|_2$.
Let $(X_1, L_1, U_1), \dots , (X_n, L_n, U_n)$ be a sample of the triplet $(X, L, U)$ where $(L, U)$ are the censor elements and $X$ is the variable of interest, such that the sequences $(L_i, U_i)_{1\leq i \leq n}$ and $(X_i)_{1\leq i \leq n}$ are independent. The way the censorship works is the following. We exactly observe the data $X_i$ if $X_i\in [L_i, U_i]$, thus we say that $[L_i, U_i]$ is the window of observation, otherwise we do not observe $X_i$ thus we only have the information of the couple $(L_i, U_i)$. In that case we will set the observation to an arbitrary point outside of $[0, 2\pi]$, we choose $-\pi$. We suppose that 
\[L \ne U \textit{a.s.} ,\]
 meaning that for each observation the window of observation is a non null set. Notice here that we do not make the hypothesis $L_i \leq U_i$ since the order is important for the definition of the interval $[L_i, U_i]$, their role is not exchangeable.
\begin{figure}[ht]

\begin{center}
\begin{tikzpicture}[scale=1.5]

  \begin{scope}[xshift=-2cm, yshift=1.6cm]
    \draw[red, thick] (0,0) circle(1);
    
    \foreach \angle/\label in {
        0/{$0$},
        90/{$\frac{\pi}{2}$},
        180/{$\pi$},
        270/{$\frac{3\pi}{2}$}
    } {
      \draw (\angle:1.2) node {\label};
      \draw (-1.05,0) -- (-0.95,0);
      \draw (1.05,0) -- (0.95,0);
      \draw (0,-1.05) -- (0,-0.95);
      \draw (0,1.05) -- (0,0.95);
    }

    \node at (0,0) {+};

    \draw[black, thick] (20:1) arc (20:60:1);
    \fill[red] (20:1) circle(0.025);
    \fill[red] (60:1) circle(0.025);
    
    \foreach \a in {70, 80,...,350} {
      \draw[red, thick] (\a:1) node[cross=2.1pt, rotate=\a] {};
    }
    \foreach \a in {0, 10, 20} {
      \draw[red, thick] (\a:1) node[cross=2.1pt, rotate=\a] {};
    }
  \end{scope}

  \begin{scope}[xshift=-2cm, yshift=-1.6cm]
    \draw[black, thick] (0,0) circle(1);
    
    \foreach \angle/\label in {
        0/{$0$},
        90/{$\frac{\pi}{2}$},
        180/{$\pi$},
        270/{$\frac{3\pi}{2}$}
    } {
      \draw (\angle:1.2) node {\label};
      \draw (-1.05,0) -- (-0.95,0);
      \draw (1.05,0) -- (0.95,0);
      \draw (0,-1.05) -- (0,-0.95);
      \draw (0,1.05) -- (0,0.95);
    }

    \node at (0,0) {+};

    \draw[red, thick] (20:1) arc (20:60:1);
    \fill[red] (20:1) circle(0.025);
    \fill[red] (60:1) circle(0.025);
    
    \foreach \a in {30, 40, 50} {
      \draw[red, thick] (\a:1) node[cross=2.1pt, rotate=\a] {};
    }
  \end{scope}
  
  \begin{scope}[xshift=2cm, yshift=1.6cm]
  
  \draw[red, thick] (0,0) -- (3.14,0);
  
   \foreach \angle/\label in {
        {0*3.14/360}/{$0$},
        {90*3.14/360}/{$\frac{\pi}{2}$},
        {180*3.14/360}/{$\pi$},
        {270*3.14/360}/{$\frac{3\pi}{2}$},
        {360*3.14/360}/{$2\pi$}
    } {
      \draw (\angle,0.5) node {\label};
      \draw (\angle,0.05) -- (\angle,-0.05);
    }
    
    
    \draw[black, thick] (20*3.14/360,0) -- (60*3.14/360,0);
    \fill[red] (20*3.14/360,0) circle(0.025);
    \fill[red] (60*3.14/360,0) circle(0.025);
    
    \foreach \a in {70,80,...,360} {
    \pgfmathsetmacro{\x}{\a*3.14/360}
      \draw[red, thick] (\x,0) node[cross=1.7pt] {};
    }
    \foreach \a in {0, 10} {
    \pgfmathsetmacro{\x}{\a*3.14/360}
      \draw[red, thick] (\x,0) node[cross=1.7pt] {};
    }
    
  \end{scope}
  
   \begin{scope}[xshift=2cm, yshift=-1.6cm]
  
  \draw[black, thick] (0,0) -- (3.14,0);
  
   \foreach \angle/\label in {
        {0*3.14/360}/{$0$},
        {90*3.14/360}/{$\frac{\pi}{2}$},
        {180*3.14/360}/{$\pi$},
        {270*3.14/360}/{$\frac{3\pi}{2}$},
        {360*3.14/360}/{$2\pi$}
    } {
      \draw (\angle,0.5) node {\label};
      \draw (\angle,0.05) -- (\angle,-0.05);
    }
    
    
    \draw[red, thick] (20*3.14/360,0) -- (60*3.14/360,0);
    \fill[red] (30*3.14/360,0) circle(0.025);
    \fill[red] (60*3.14/360,0) circle(0.025);
    
    \foreach \a in {30, 40, 50} {
    \pgfmathsetmacro{\x}{\a*3.14/360}
      \draw[red, thick] (\x,0) node[cross=1.7pt] {};
    }
    
  \end{scope}

\end{tikzpicture}
\end{center}
\caption{On the top left is the circular interval $\left[ \frac{\pi}{3}, \frac{\pi}{6} \right]$ on $\cercle$ and on the top right on  $\mathbb{R}$. On the bottom left is the circular interval $\left[ \frac{\pi}{6}, \frac{\pi}{3} \right]$ on $\cercle$ and on the bottom right on $\mathbb{R}$.}
  \label{fig:intervalle_circulaire_cercle_et_r}
\end{figure}
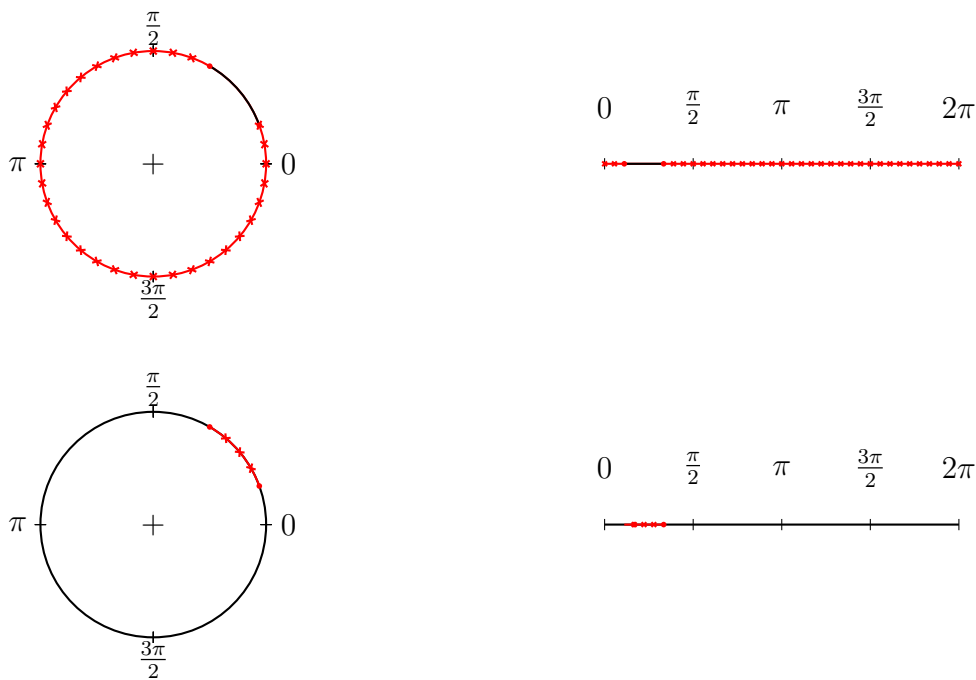\\
We then define
\begin{equation}\nonumber
\left\{
\begin{array}{ll}
X_i'=\left\{
\begin{array}{ll}
        X_i \text{ , if } X_i \in [L_i, U_i] ,\\
        -\pi \text{ , otherwise.}
    \end{array}
\right.\\
\Delta_i= \mathds{1}_{\{X_i\in[L_i, U_i]\}}.
\end{array}
\right.
\end{equation}
So the true observations are the sample of triplets $(X_1', L_1, U_1),\dots, (X_n', L_n, U_n)$.\\

The goal is to estimate the density function $f$ of the circular variable $X$, using a sample of $(X'\hspace{-2pt}, L, U)$. For that purpose we use a quotient estimator.  Consider the following function:
\begin{equation}\label{eq:Phi}
\sigma: x\in\mathbb{S}^1\mapsto \Proba (x\in [L, U]) =\Inte\Inte \mathds{1}_{\{x\in[l, u]\}} \Proba_{(L, U)}(\dd l, \dd u).
\end{equation}
We can show that, for any integrable function $t$, 

\begin{align}
\nonumber
\Esp(\Delta_i t(X_i'))&= \Esp(\Delta_i t(X_i))\\ 
\nonumber
&= \Inte\Inte\Inte \mathds{1}_{\{x\in[l, u]\}} t(x) f(x) \Proba_{(L, U)}(\dd l, \dd u) \dd x \\
\label{eq:Esperance_et_psi}
&= \Inte t(x) f(x)\sigma(x) \dd x= <t, f\sigma> = <t, \psi>,
\end{align}
where we set 
\[ \psi = f\sigma .\]\\
This equation provides a method to build our estimator of $f$. Indeed since $f = \frac{\psi}{\sigma}$ we hope that an estimation of $\psi$ and $\sigma$ will give us an estimation of the density, when this estimation is well defined.

\subsection{Our estimator}\label{2.3}

For the estimation of $\psi$ we use a minimum contrast estimator. So before defining the contrast function we need to define the space we want the estimator to be an element of. The spaces we consider here are the linear sieves. We recall their definition. The linear sieves are a collection $(S_m)_{m\in \mathcal{M}_n}$ of finite dimensional sub-space of $\mathbb{L}^2(\cercle)$ such that $\dim S_m = D_m$ with $D_m\leq n, \forall m\in \mathcal{M}_n$ and the following inequality is verified:
\begin{equation}\label{eq:Linear_Sieves}
\exists \Phi_0 >0, \forall m \in \mathcal{M}_n, \forall t\in S_m, \|t\|_{\infty} \leq \Phi_0 \sqrt{D_m} \|t\|_2.
\end{equation}
Moreover, for all $m\in\mathcal{M}_n$, if $(\philambda)_{\indices}$ is a orthonormal basis of $S_m$, where $|\Lambda_m|=D_m$ , an equivalent version of \eqref{eq:Linear_Sieves} is
\begin{equation}\label{eq:Linear_Sieves2}
\exists \Phi_0>0, \forall m \in \mathcal{M}_n, \|\sum_{\indices} \philambda^2\|_{\infty} \leq \Phi_0^2 D_m.
\end{equation}
Here $\mathcal{M}_n$ is the set of possible values of $m$.\\
We recall some examples of useful linear sieves$:$
\begin{itemize}
\item[$\bullet$] Regular polynomial spaces$:$ $S_m$ is generated by $m(r+1)$ polynomials of degrees from $0$ to $r$ on each subintervals $\left[\frac{2j\pi}{m}, \frac{2(j+1)\pi}{m} \right]$ for $j \in \{ 0, \dots , m-1 \}$. Thus we have $D_m= m(r+1)$ and we can consider $\mathcal{M}_n = \{ 1, \dots , \lfloor n/(r+1)\rfloor \}$\\
\item[$\bullet$] Trigonometric spaces$:$ $S_m$ is generated by $\{\varphi_0=\frac{1}{\sqrt{2\pi}}, \varphi_{2j-1}=\frac{1}{\sqrt{\pi}}\cos(j \cdot), \\ \varphi_{2j}=\frac{1}{\sqrt{\pi}}\sin(j \cdot) | \text{ for } j \in \{ 1, \dots , m \} \}$, thus $D_m=2m+1$, $\Phi_0 =\frac{1}{\sqrt{2\pi}}$ and we can consider $\mathcal{M}_n = \{ 1, \dots ,  [n/2]-1\}$.\\
\item[$\bullet$] Dyadic wavelet spaces$:$ $S_m$ is generated by $\{ \phi_{j_0,k}, \Psi_{j,k} , k\in\mathbb{Z} , m \geq j \geq j_0\}$ for any fixed resolution $j_0$ and with $\phi_{j_0,k}(x) = \sqrt{\frac{2^{j_0}}{2\pi}}\phi\left(\frac{2^{j_0}x}{2\pi} - k\right)$ and $\Psi_{j,k}(x) = \sqrt{\frac{2^{j}}{2\pi}}\Psi\left(\frac{2^j x}{2\pi} - k\right)$ where $\phi$ and $\Psi$ are respectively the scaling function and the mother wavelet on $\cercle$ and are elements of the Hölder space $C^r$ for $r\geq 0$.\\
\end{itemize}

Motivated by \eqref{eq:Esperance_et_psi} we define the following empirical contrast function:
\begin{equation}\label{eq:Contraste}
\gamma_n : t\in \mathbb{L}^2(\cercle) \mapsto \|t\|_2^2 - \frac{2}{n}\sum_{i=1}^n \Delta_i t(X_i').
\end{equation}
Thus we define the estimator of $\psi$ as the following. For $m$ in $\mathcal{M}_n$,
\[ \hat{\psi}_m = \argmin{t\in S_m}\hspace{3pt} \gamma_n(t) = \sum_{\indices} \hat{a}_\lambda \philambda, \]
where the $(\hat{a}_\lambda)_{\indices}$ will be specified later. The reason we define the contrast like this is the following. If we take its expectation using \eqref{eq:Esperance_et_psi} we have, for $t$ in $\mathbb{L}^2(\cercle)$,
\[ \Esp( \gamma_n(t)) = \|t\|_2^2 - 2<t,\psi> = \|t-\psi\|_2^2 - \|\psi\|^2_2. \]
The function $\hat{\psi}_m$ which minimizes the contrast $\gamma_n$ on $S_m$ is likely to minimize the norm $\|t-\psi\|_2^2$ on $S_m$, thus to be a relevant estimator of $\psi$.\\
We consider $\psi_m$ the orthogonal projection of $\psi$ on $S_m$. Thus if $(\philambda)_{\indices}$ is an orthonormal basis of $S_m$ we can write
\begin{equation}\label{eq:Def_psi_m}
\psi_m = \sum_{\indices} a_{\lambda}\philambda,
\end{equation}
with 
\begin{equation}\label{eq:def_de_a}
a_{\lambda} = <\psi , \philambda> = \Inte \psi(x) \philambda(x) \dd x = \Esp(\Delta\philambda(X')).
\end{equation} 
This means that the $(a_\lambda)_{\indices}$ are exactly the Fourier coefficients of $\psi_m$ and \eqref{eq:def_de_a} shows that $\tilde{a}_{\lambda} = \frac{1}{n}\sum_{i=1}^n \Delta_i \philambda (X_i')$ is a good estimation of $a_\lambda$, thus making $\sum_{\indices} \tilde{a}_{\lambda}\philambda$ a good estimation of $\psi_m$.\\
The next lemma shows that the coefficients of $\hat{\psi}_m$, the $(\hat{a}_\lambda)_{\indices}$, are exactly the empirical estimator of the Fourier coefficients.
\begin{lemma}\label{lemme_sur_les_coefs}
For all $\indices$, $\hat{a}_\lambda = \tilde{a}_\lambda = \frac{1}{n}\sum_{i=1}^n \Delta_i \philambda(X'_i)$.
\end{lemma}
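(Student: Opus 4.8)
The plan is to expand the contrast $\gamma_n$ in the coordinates furnished by the orthonormal basis $(\philambda)_{\indices}$ of $S_m$ and to recognize it, up to an additive constant independent of $t$, as a sum of squares. Write an arbitrary element $t\in S_m$ as $t=\sum_{\indices}a_\lambda\philambda$. By Parseval's identity (the basis being orthonormal), $\|t\|_2^2=\sum_{\indices}a_\lambda^2$. For the data-dependent term, linearity of the evaluation gives
\[
\frac{2}{n}\sum_{i=1}^n\Delta_i t(X_i')
=\frac{2}{n}\sum_{i=1}^n\Delta_i\sum_{\indices}a_\lambda\philambda(X_i')
=2\sum_{\indices}a_\lambda\tilde a_\lambda,
\]
where $\tilde a_\lambda=\frac1n\sum_{i=1}^n\Delta_i\philambda(X_i')$. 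Hence
\[
\gamma_n(t)=\sum_{\indices}\bigl(a_\lambda^2-2a_\lambda\tilde a_\lambda\bigr)
=\sum_{\indices}\bigl(a_\lambda-\tilde a_\lambda\bigr)^2-\sum_{\indices}\tilde a_\lambda^2 .
\]

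The last sum does not involve $t$, so minimizing $\gamma_n$ over $S_m$ amounts to minimizing $\sum_{\indices}(a_\lambda-\tilde a_\lambda)^2$ over the coefficient vector $(a_\lambda)_{\indices}\in\mathbb R^{D_m}$, which is attained, uniquely, at $a_\lambda=\tilde a_\lambda$ for every $\indices$. Since by definition $\hat\psi_m=\argmin{t\in S_m}\gamma_n(t)=\sum_{\indices}\hat a_\lambda\philambda$, identification of coefficients yields $\hat a_\lambda=\tilde a_\lambda=\frac1n\sum_{i=1}^n\Delta_i\philambda(X_i')$ for all $\indices$, which is the assertion. (Equivalently, one may observe that $t\mapsto\gamma_n(t)$ restricted to $S_m$ is a strictly convex quadratic form with Hessian $2I_{D_m}$ in these coordinates, so its unique minimizer is the critical point obtained from $\partial\gamma_n/\partial a_\mu=2a_\mu-2\tilde a_\mu=0$.)

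There is no real obstacle here: the argument is a completion of squares. The only points deserving a word of care are that the computation uses the orthonormality of $(\philambda)_{\indices}$ — which is what makes the quadratic form diagonal and the minimizer explicit — and that $D_m<\infty$, so that the $\argmin$ defining $\hat\psi_m$ is indeed attained; both are built into the definition of the linear sieves.
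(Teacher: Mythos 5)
Your proposal is correct and rests on exactly the same expansion of $\gamma_n$ in the orthonormal basis of $S_m$ as the paper's proof; you finish by completing the square, whereas the paper sets the gradient to zero and checks that the Hessian $2I_{D_m}$ is positive definite, but these are the same argument in two dressings, and you even note the calculus variant yourself. No gaps.
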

The proof can be found in Section~\ref{Preuve_des_coefs}.\\
Thus the estimator of $\psi$ can be written as
\begin{equation}\label{eq:Def_hat_psi_m}
\hat{\psi}_m = \sum_{\indices}\left( \frac{1}{n} \sum_{i=1}^n \Delta_i \philambda(X_i')\right)\philambda.
\end{equation}
For the estimation of $\sigma$ defined in \eqref{eq:Phi}, since it is a probability we consider the empirical estimator
\begin{equation}\label{eq:def_har_sigma}
\hat{\sigma} : x\in\cercle\mapsto \frac{1}{n} \sum_{i=1}^n \mathds{1}_{\{x\in[L_i, U_i]\}}.
\end{equation}
Because $f=\frac{\psi}{\sigma}$, we put a threshold to make sure it is always defined. Moreover we make it positive as it is the estimator of a density probability function. We then define the estimator of $f$ as the following:

\begin{equation}\label{Deuxieme estimateur}
\hat{f}: x\mapsto \frac{(\hat{\psi}_m(x))_+}{\hat{\sigma}(x) \lor n^{-1/2}} ,
\end{equation}
where $a \vee b = \max(a,b)$ and $(h(x))_+ =\max(h(x), 0) $ for a function $h$. \\
Let us now prove theoretical results about the estimator $\hat{f}$ and see what hypotheses are necessary to ensure its performances.

\section{Theoretical results}\setcounter{equation}{0}\label{SectionResultats}

\subsection{MISE upper bounds}

We make the following assumption on the model:\\[6pt]
\assumption{(A)}{
There exists a real $\sigma_0>0$ such that, for all $x$ in $\cercle$,\[ \sigma(x)\geq \sigma_0 >0.\]}

This means that, with non-zero probability, any point of the circle can be inside a window of observation. 
This assumption is a theoretical guarantee that any point $x$ of $\cercle$ can be observed and if we have a sample large enough it will be estimated by $\frac{(\hat{\psi}_m (x))_+}{\hat{\sigma}(x)}$. Even though $\sigma_0$ is an unknown quantity it will still be useful for theoretical computations. In some practical cases this assumption will not be verified (for example with deterministic censorship elements) and we will treat them in our simulations. The use of the threshold $n^{-1/2}$ is also here to prevent the cases where the sample is not large enough or $\sigma_0$ is too small resulting in $\hat{\sigma}(x)=0$ which makes our estimator ill-defined.\\
We define $\tilde{f}: x\mapsto \frac{\hat{\psi}_m(x)}{\hat{\sigma}(x) \lor n^{-1/2}}$ such that $\hat{f} = \tilde{f}_+$. We can show that for all $x\in\cercle$, $|\hat{f}(x)-f(x)| \leq |\tilde{f}(x) - f(x)|$ (see Section~\ref{Preuve MISE estimateur seuil}). So the following computations will use $\tilde{f}$.\\
First we show that
\begin{align*}
| \tilde{f} - f | &= \left| \frac{\hat{\psi}_m}{\hat{\sigma}\lor n^{-1/2}} - \frac{\psi}{\sigma} \right| \leq \left| \frac{\hat{\psi}_m - \psi}{\hat{\sigma}\lor n^{-1/2}} \right| + \left| \frac{\sigma - \hat{\sigma}\lor n^{-1/2}}{\hat{\sigma}\lor n^{-1/2}} f\right|.
\end{align*}
Meaning that
\[ \|\tilde{f} - f\|_2^2 \leq 2\underbrace{\left\| \frac{\hat{\psi}_m - \psi}{\hat{\sigma}\lor n^{-1/2}}\right\|_2^2}_{:=A_1} + 2\underbrace{\left\| \frac{\sigma - \hat{\sigma}\lor n^{-1/2}}{\hat{\sigma}\lor n^{-1/2}} f\right\|_2^2}_{:=A_2} .\]
Then we define the following random set $E = \left\{\omega\in\Omega, \forall x\in \cercle, \hat{\sigma}(x) \geq \frac{\sigma_0}{2}\right\}$. We can show that $E$ is a space of high probability using that \[E^c = \left\{\omega\in\Omega,\exists x\in\cercle, \hat{\sigma}(x) < \frac{\sigma_0}{2} \right\} \subset \left\{\|\hat{\sigma} - \sigma\|_\infty \geq \frac{\sigma_0}{2} \right\},\] and Lemma~\ref{Lemme} (see Section~\ref{Preuve du lemme}) shows that this last set is a set of low probability.\\
We find upper bound of $A_1$ and $A_2$ on the partition $\Omega = E\cup E^c$ and obtain the following result:
\begin{theorem}\label{MISE estimateur seuil}
Suppose $f$ is an element of $\mathbb{L}^2(\cercle)$ and Assumption (A). Then an upper bound of $\hat{f}$ MISE is

\[\Esp(\|\hat{f} - f\|_2^2) \leq \frac{8}{\sigma_0^2}\left(\| \psi - \psi_m\|_2^2 + \frac{\Phi_0^2 D_m \Esp(\Delta_1)}{n}\right)  + \frac{C}{n} ,\]
where $\psi_m$ is the orthogonal projection of $\psi$ on $S_m$,  $C$ is a constant that depends on $\Phi_0, \sigma_0$ and $\|f\|_2$.
\end{theorem}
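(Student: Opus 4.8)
The plan is to follow the decomposition already set up before the statement: since $|\hat f(x)-f(x)|\le|\tilde f(x)-f(x)|$ pointwise, it suffices to bound $\Esp(\|\tilde f-f\|_2^2)$, and using the triangle inequality we have $\|\tilde f-f\|_2^2\le 2A_1+2A_2$ with $A_1=\|(\hat\psi_m-\psi)/(\hat\sigma\vee n^{-1/2})\|_2^2$ and $A_2=\|((\sigma-\hat\sigma\vee n^{-1/2})/(\hat\sigma\vee n^{-1/2}))f\|_2^2$. I would bound each of $\Esp(A_1)$ and $\Esp(A_2)$ separately by splitting the expectation over the event $E=\{\forall x,\ \hat\sigma(x)\ge\sigma_0/2\}$ and its complement, as announced in the text.

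On the event $E$, the denominator $\hat\sigma\vee n^{-1/2}$ is bounded below by $\sigma_0/2$, so $A_1\mathds{1}_E\le \frac{4}{\sigma_0^2}\|\hat\psi_m-\psi\|_2^2$. Taking expectations and using the Pythagorean decomposition $\|\hat\psi_m-\psi\|_2^2=\|\psi-\psi_m\|_2^2+\|\hat\psi_m-\psi_m\|_2^2$ (the bias/variance split, valid because $\hat\psi_m,\psi_m\in S_m$ and $\psi-\psi_m\perp S_m$), the bias term gives $\|\psi-\psi_m\|_2^2$ and the variance term is $\Esp(\|\hat\psi_m-\psi_m\|_2^2)=\sum_{\indices}\Var(\hat a_\lambda)$. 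By Lemma~\ref{lemme_sur_les_coefs}, $\hat a_\lambda=\frac1n\sum_i\Delta_i\philambda(X_i')$, so $\Var(\hat a_\lambda)\le\frac1n\Esp(\Delta_1\philambda(X_1')^2)=\frac1n\Inte\philambda(x)^2\psi(x)\,\dd x$ by \eqref{eq:Esperance_et_psi}. Summing over $\lambda$ and using \eqref{eq:Linear_Sieves2}, $\sum_\lambda\Var(\hat a_\lambda)\le\frac1n\Inte(\sum_\lambda\philambda^2(x))\psi(x)\,\dd x\le\frac{\Phi_0^2D_m}{n}\Inte\psi(x)\,\dd x=\frac{\Phi_0^2D_m\Esp(\Delta_1)}{n}$, recognizing $\Inte\psi=\Inte f\sigma=\Esp(\Delta_1)$. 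This yields exactly $\Esp(A_1\mathds{1}_E)\le\frac{4}{\sigma_0^2}(\|\psi-\psi_m\|_2^2+\Phi_0^2D_m\Esp(\Delta_1)/n)$, which after multiplying by $2$ gives the main displayed term. For $A_2$ on $E$, I would write $|\sigma-\hat\sigma\vee n^{-1/2}|\le|\sigma-\hat\sigma|+|\hat\sigma-\hat\sigma\vee n^{-1/2}|\le\|\hat\sigma-\sigma\|_\infty+n^{-1/2}$ and bound $A_2\mathds{1}_E\le\frac{4}{\sigma_0^2}\|f\|_2^2(\|\hat\sigma-\sigma\|_\infty+n^{-1/2})^2$; taking expectations and invoking Lemma~\ref{Lemme} to control $\Esp(\|\hat\sigma-\sigma\|_\infty^2)$ by something of order $n^{-1}$ (a concentration/uniform-deviation bound for the empirical process $\hat\sigma-\sigma$, which is an average of i.i.d.\ bounded indicator-type functions over the circle) gives $\Esp(A_2\mathds{1}_E)\le C_1/n$ for a constant depending on $\Phi_0,\sigma_0,\|f\|_2$.

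On the complementary event $E^c$, I would use crude deterministic bounds together with the smallness of $\Proba(E^c)$. There, $\hat\sigma\vee n^{-1/2}\ge n^{-1/2}$, and $|\hat\psi_m(x)|\le\Phi_0\sqrt{D_m}\|\hat\psi_m\|_2$ with $\|\hat\psi_m\|_2^2=\sum_\lambda\hat a_\lambda^2\le\sum_\lambda(\frac1n\sum_i|\philambda(X_i')|)^2$; a uniform bound of the form $|\hat\psi_m|\le\Phi_0^2 D_m$ (since each $|\hat a_\lambda|\le\frac1n\sum_i|\philambda(X_i')|$ and $\sum_\lambda|\philambda(x)\philambda(X_i')|\le(\sum_\lambda\philambda^2(x))^{1/2}(\sum_\lambda\philambda^2(X_i'))^{1/2}\le\Phi_0^2D_m$) makes $A_1\mathds{1}_{E^c}$ at most a polynomial in $n$ (using $D_m\le n$) times $\mathds{1}_{E^c}$, and similarly $A_2\mathds{1}_{E^c}\le\|f\|_2^2(\sigma(x)+n^{-1/2})^2 n\cdot\mathds{1}_{E^c}\lesssim n\,\mathds{1}_{E^c}$ since $\sigma\le1$. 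Combined with $E^c\subset\{\|\hat\sigma-\sigma\|_\infty\ge\sigma_0/2\}$ and the exponential (or at least $o(n^{-k})$ for every $k$) bound from Lemma~\ref{Lemme}, each of $\Esp(A_1\mathds{1}_{E^c})$ and $\Esp(A_2\mathds{1}_{E^c})$ is $O(1/n)$, absorbed into $C/n$. Adding all four pieces and collecting the constants proves the theorem.

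The main obstacle is the $E^c$ analysis: one must make sure the concentration bound of Lemma~\ref{Lemme} decays fast enough (faster than any negative power of $n$, e.g.\ exponentially via Bernstein or a bounded-difference inequality applied uniformly over $\cercle$, which is possible because $\hat\sigma-\sigma$ is an average of bounded functions and the class $\{\mathds{1}_{\{x\in[\cdot,\cdot]\}}:x\in\cercle\}$ has controlled complexity) to beat the polynomial-in-$n$ deterministic bounds on $A_1$ and $A_2$ coming from $D_m\le n$ and the $n^{-1/2}$ threshold in the denominator. Once that is in hand, everything else is the routine bias–variance bookkeeping described above.
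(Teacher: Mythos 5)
Your proof is correct and follows the same strategy as the paper's: the pointwise domination $|\hat{f}-f|\le|\tilde{f}-f|$, the $2A_1+2A_2$ split, and the $E$/$E^c$ partition backed by Lemma~\ref{Lemme}. The deviations are local and all yield the same order. For $\Esp(A_2\mathds{1}_E)$ you integrate the tail of Lemma~\ref{Lemme} to control $\Esp(\|\hat{\sigma}-\sigma\|_\infty^2)=O(1/n)$, whereas the paper's Proposition~\ref{MISE de phi chapeau} works directly in $L^2$ via the pointwise variance $\Var(\hat{\sigma}(x))=\sigma(x)(1-\sigma(x))/n\le 1/(4n)$, giving $\Esp(\|(\sigma-\hat{\sigma})f\|_2^2)\le\|f\|_2^2/(4n)$ — cheaper and avoiding the sup-norm detour. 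On $E^c$ you invoke a crude envelope $\|\hat{\psi}_m\|_\infty\le\Phi_0^2D_m$, where the paper uses the sharper $L^2$ bound $\|\hat{\psi}_m\|_2^2\le\Phi_0^2D_m$ of \eqref{eq:Ineg9.9}; since the exponentially small $\Proba(E^c)$ absorbs any polynomial-in-$n$ factor, both suffice. In one spot you are actually a touch more careful: by writing $\|\sigma-\hat{\sigma}\vee n^{-1/2}\|_\infty\le\|\sigma-\hat{\sigma}\|_\infty+n^{-1/2}$ you correctly handle the possibility $\hat{\sigma}<n^{-1/2}$ on $E$, which the paper skips by implicitly assuming $n^{-1/2}\le\sigma_0/2$.
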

The proof can be found in Section~\ref{Preuve MISE estimateur seuil}\\
Moreover one can sharpen the result of Theorem~\ref{MISE estimateur seuil}. Here we choose the linear sieves to be the trigonometric spaces (their definitions is reminded in Section~\ref{2.3}). Additionally we suppose that $\psi$ is an element of a Sobolev class $W(\beta,L)$. We recall the definition of the Sobolev class. We define $\alpha_j$ as the following:
\begin{equation}\nonumber
\alpha_j=\left\{\begin{array}{ll}
        j^{\beta}, \hspace{25pt}\text{for even }j,\\
        (j+1)^{\beta}, \hspace{0pt}\text{for odd }j.
    \end{array}
\right.
\end{equation}
We can define the Sobolev class $W(\beta,L)$ for $\beta>0$ and $L>0$ as the following set of functions:
\begin{equation}\label{Sobolev_Def}
W(\beta,L) = \left\{ g \in \mathbb{L}^2(\cercle), \sum_{j=0}^{+\infty} \left(\alpha_j^2 \left|<g,\varphi_j>\right|^2\right) \hspace{3pt}< \frac{L^2}{\pi^{2\beta}} \right\},
\end{equation}
where $\{ \varphi_j\}_{j\in\mathbb{N}}$ is the trigonometric basis of $\mathbb{L}^2(\cercle)$. Finally we recall $\lfloor x \rfloor$ is the floor of $x$.\\
For this class of functions and those linear sieves we can prove the following lemma:
\begin{lemma}\label{Lemme_Sobolev}
Let $\psi$ be an element of $W(\beta_{\psi},L)$ and $S_m$ be the trigonometric space of dimension $D_m$, with $m=m_n$ chosen such that $D_{m_n}=\lfloor n^{\frac{1}{2\beta_{\psi}+1}} \rfloor$. Then we have the following upper bound:
 \[ \|\psi - \psi_m \|_2^2\leq \frac{L^2}{\pi^{2\beta_{\psi}}}n^{\frac{-2\beta_{\psi}}{2\beta_{\psi} +1}}.\]
\end{lemma}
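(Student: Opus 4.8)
The plan is to expand the projection error in the trigonometric basis and then recognise the resulting tail as being controlled by the Sobolev seminorm. Since $S_m$ is the trigonometric space of dimension $D_m$, it is spanned by $\varphi_0,\dots,\varphi_{D_m-1}$, so the orthogonal projection is $\psi_m=\sum_{j=0}^{D_m-1}\langle\psi,\varphi_j\rangle\varphi_j$, and Parseval's identity gives
\[ \|\psi-\psi_m\|_2^2=\sum_{j\ge D_m}|\langle\psi,\varphi_j\rangle|^2. \]

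Next I would bring in the Sobolev weights. The sequence $(\alpha_j)_{j\ge 0}$ is nondecreasing — it reads $0,2^{\beta_{\psi}},2^{\beta_{\psi}},4^{\beta_{\psi}},4^{\beta_{\psi}},\dots$, i.e.\ $\alpha_{2k-1}=\alpha_{2k}=(2k)^{\beta_{\psi}}$ — so for every $j\ge D_m$ we have $\alpha_j\ge\alpha_{D_m}>0$. Inserting $\alpha_j^2/\alpha_j^2$ into each term and pulling out the smallest weight yields
\[ \|\psi-\psi_m\|_2^2=\sum_{j\ge D_m}\frac{\alpha_j^2\,|\langle\psi,\varphi_j\rangle|^2}{\alpha_j^2}\le\frac{1}{\alpha_{D_m}^2}\sum_{j\ge D_m}\alpha_j^2\,|\langle\psi,\varphi_j\rangle|^2\le\frac{1}{\alpha_{D_m}^2}\cdot\frac{L^2}{\pi^{2\beta_{\psi}}}, \]
where the last inequality holds because the tail is dominated by the full series $\sum_{j\ge 0}\alpha_j^2|\langle\psi,\varphi_j\rangle|^2$, which is bounded by $L^2/\pi^{2\beta_{\psi}}$ by the membership $\psi\in W(\beta_{\psi},L)$ and \eqref{Sobolev_Def}.

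It then remains to lower-bound $\alpha_{D_m}$ by the chosen level. Because the trigonometric space has $D_m=2m+1$, which is odd, the definition of $\alpha_j$ gives $\alpha_{D_m}=(D_m+1)^{\beta_{\psi}}$; and since $D_m=\lfloor n^{1/(2\beta_{\psi}+1)}\rfloor$ we have $D_m+1>n^{1/(2\beta_{\psi}+1)}$, hence $\alpha_{D_m}\ge n^{\beta_{\psi}/(2\beta_{\psi}+1)}$. Substituting this into the previous display gives $\|\psi-\psi_m\|_2^2\le\frac{L^2}{\pi^{2\beta_{\psi}}}\,n^{-2\beta_{\psi}/(2\beta_{\psi}+1)}$, which is the assertion.

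There is no serious obstacle in this argument — it is the standard bias computation for a projection estimator over a Sobolev ellipsoid. The only two points needing a little care are that the tail sum starts exactly at the odd index $j=D_m$ (so that $\alpha_{D_m}=(D_m+1)^{\beta_{\psi}}$ rather than $D_m^{\beta_{\psi}}$), and that the floor in $D_{m_n}=\lfloor n^{1/(2\beta_{\psi}+1)}\rfloor$ is harmless here because $\lfloor x\rfloor+1>x$, so the needed inequality still points the right way.
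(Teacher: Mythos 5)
Your proof is correct and follows essentially the same route as the paper's: Parseval's identity for the projection tail, factoring out the smallest Sobolev weight $\alpha$, applying the ellipsoid constraint, and using the floor inequality $D_m+1>n^{1/(2\beta_\psi+1)}$. Indexing conventions differ slightly (the paper sums $\psi_m$ up to $j=D_m$ so the tail starts at $D_m+1$, you start the tail at $D_m$), but this is a cosmetic difference and in both cases the bottleneck weight is $(D_m+1)^{\beta_\psi}$, so the conclusion and the argument are the same.
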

The proof can be found in Section~\ref{Preuve_du_taux_de_hat_psi}.\\
With this lemma, we present a refinement of the previous theorem in this particular setting:
\begin{corollary}\label{Coro estimateur seuil}
Suppose $f$ is an element of $\mathbb{L}^2(\cercle)$, Assumption (A) and that $\psi$ is an element of $W(\beta_{\psi},L)$. Moreover if $S_m$ is a trigonometric space of dimension $D_m$ with $m=m_n$ chosen such that $D_{m_n}=\lfloor n^{\frac{1}{2\beta_{\psi}+1}}\rfloor$ we have the following rate for the MISE of $\hat{f}$:

\[\Esp(\|\hat{f} -f\|_2^2) =  \mathcal{O}\left( n^{\frac{-2\beta_{\psi}}{2\beta_{\psi} +1}} \right).\]
\end{corollary}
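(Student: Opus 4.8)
The plan is to combine Theorem~\ref{MISE estimateur seuil} with the bias estimate of Lemma~\ref{Lemme_Sobolev} and then verify that every remaining term is of order at most $n^{-2\beta_\psi/(2\beta_\psi+1)}$. Starting from the bound
\[
\Esp(\|\hat f - f\|_2^2) \leq \frac{8}{\sigma_0^2}\left(\|\psi-\psi_m\|_2^2 + \frac{\Phi_0^2 D_m \Esp(\Delta_1)}{n}\right) + \frac{C}{n},
\]
I would treat the three summands in turn. The constant $8/\sigma_0^2$ is fixed (Assumption (A)), $\Phi_0 = 1/\sqrt{2\pi}$ is the explicit constant for the trigonometric sieve, and $\Esp(\Delta_1) \leq 1$, so the second term is bounded by a constant times $D_{m_n}/n$. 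With the prescribed choice $D_{m_n} = \lfloor n^{1/(2\beta_\psi+1)}\rfloor \leq n^{1/(2\beta_\psi+1)}$, this gives $D_{m_n}/n \leq n^{1/(2\beta_\psi+1)-1} = n^{-2\beta_\psi/(2\beta_\psi+1)}$, which is exactly the target rate. The third term $C/n$ is negligible compared to this, since $2\beta_\psi/(2\beta_\psi+1) < 1$, so $C/n = o(n^{-2\beta_\psi/(2\beta_\psi+1)})$.

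For the first term, I would invoke Lemma~\ref{Lemme_Sobolev} directly: under the hypothesis $\psi \in W(\beta_\psi, L)$ and with the same choice $D_{m_n} = \lfloor n^{1/(2\beta_\psi+1)}\rfloor$, that lemma gives
\[
\|\psi - \psi_m\|_2^2 \leq \frac{L^2}{\pi^{2\beta_\psi}} n^{-2\beta_\psi/(2\beta_\psi+1)}.
\]
Hence this term is also $\mathcal O(n^{-2\beta_\psi/(2\beta_\psi+1)})$ with an explicit constant depending only on $L$ and $\beta_\psi$. Adding the three contributions, every term is $\mathcal O(n^{-2\beta_\psi/(2\beta_\psi+1)})$, and the overall constant depends only on $\sigma_0$, $\Phi_0$ (hence absolute), $\|f\|_2$, $L$ and $\beta_\psi$ — all fixed quantities — which yields the claimed rate.

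There is essentially no obstacle here: the corollary is a direct bookkeeping consequence of the two results it cites, and the only minor point to check is that the sieve-dimension choice is simultaneously the one required by Theorem~\ref{MISE estimateur seuil} (it needs $D_m \leq n$, which holds since $n^{1/(2\beta_\psi+1)} \leq n$) and the one appearing in Lemma~\ref{Lemme_Sobolev} (where it is imposed verbatim), so the two bounds can legitimately be used with the same $m = m_n$. One should also remark, for completeness, that balancing the bias term $n^{-2\beta_\psi/(2\beta_\psi+1)}$ against the variance term $D_m/n$ is precisely what motivates the choice $D_{m_n} \asymp n^{1/(2\beta_\psi+1)}$ in the first place, so the two rates match by design rather than by coincidence. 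I would close by noting that this rate is the classical nonparametric rate over Sobolev classes of smoothness $\beta_\psi$, confirming that the censoring does not degrade the optimal rate provided the denominator $\sigma$ is bounded below.
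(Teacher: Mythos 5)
Your proof is correct and matches the paper's argument: it plugs Lemma~\ref{Lemme_Sobolev} into the bound of Theorem~\ref{MISE estimateur seuil}, uses $D_{m_n}/n \leq n^{-2\beta_\psi/(2\beta_\psi+1)}$ for the variance term (with $\Esp(\Delta)\leq 1$ and $\Phi_0 = 1/\sqrt{2\pi}$), and observes that $C/n$ is of lower order. The extra remarks on $D_m\leq n$ and the bias–variance balance are sound supplementary commentary but do not change the route.
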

The proof can be found in Section~\ref{Proof Coro estimateur seuil}.\\
We observe that the rate of convergence for the MISE of the estimator of $f$ depends only on the regularity of $\psi$. This result means that if $f \in W(\beta_{f},L)$ with $\beta_{\psi} \geq \beta_f$, which happens as soon as $\sigma$ is smooth enough, then the rate of our estimator is the optimal rate of convergence for the estimation of a univariate probability density function belonging to a Sobolev class of regularity $\beta_f$.\\

\subsection{Adaptive estimation procedure}\label{SectionAdaptation}

Since we use a projection estimator for $\psi$, it depends on a parameter $m$. But the $m$ we want to use in Corollary~\ref{Coro estimateur seuil} depends on $\beta_{\psi}$ that is unknown, thus making the right $m$ to choose inaccessible. We then want to implement an automatic data-driven procedure to determine the best parameter $m$ for the estimation. For this we will use a penalization procedure. Meaning we define, for $\mathcal{M}_n$ the set of possible values of $m$ allowed,
\[ \hat{m} = \argmin{m\in \mathcal{M}_n} \left( \gamma_n(\hat{\psi}_m) + \text{pen}(m) \right),\]
where we still have to determine the penalization function $\text{pen}(\cdot)$. For the following results we need another condition on our linear sieves. We suppose our spaces are nested, i.e the mapping $m\mapsto D_m$ is a one-to-one mapping and if $m<m'$ then $S_m \subset S_{m'}$.This is verified for the regular polynomial spaces for dyadic subdivisions, the dyadic wavelet spaces and the trigonometric spaces.\\
The first adaptive procedure we can produce can be found in Proposition~\ref{Penalisation_première} (see Section~\ref{Proof Penalisation_aleatoire}). The problem from this result is that the lower bound we have for $\text{pen}(\cdot)$ depends on an unknown quantity  $\Esp(\Delta) = \Inte \psi(x)\dd x$. Indeed
\[ \Esp(\Delta) = \Esp\left(\mathds{1}_{\{X\in[L,U]\}}\right) = \Inte\Inte\Inte \mathds{1}_{\{x\in [l,u]\}} f(x) \Proba(\dd l,\dd u)\dd x= \Inte \psi(x)\dd x .\]
 There are two ways to solve this. One can either brutally write that $\Esp(\Delta) \leq 1$ since it is the expectation of a random variable that follows a Bernoulli distribution. Or one can estimate this quantity, thus making the penalty term also an estimator, but one needs to verify that the new estimator of $m$ is still adaptive. This is the result of the next theorem:

\begin{theorem}\label{Penalisation_aleatoire}

Suppose the sets $(S_m)_{m\in \mathcal{M}_n}$ are nested, i.e for $m'>m \in \mathcal{M}_n, \\S_m \subset S_{m'}$, $\|\psi\|_\infty < +\infty$ and Assumption (A). For $\hat{m}$ defined as
\[ \hat{m} = \argmin{m\in \mathcal{M}_n} \left( \gamma_n(\hat{\psi}_m) + \widehat{\text{pen}}(m) \right),\]
and
\[ \widehat{\text{pen}}(m) = \kappa \Phi_0^2 \frac{D_m}{n}\left( \frac{1}{n}\sum_{i=1}^n \Delta_i \right),\]
with $\kappa$ universal constant ($\kappa > 8$ works), we obtain the following oracle inequality for the MISE of $\hat{\psi}_{\hat{m}}$:
\[ \Esp\left( \| \hat{\psi}_{\hat{m}} - \psi \|_2^2 \right) \leq  C \inf_{m\in \mathcal{M}_n} \left(\|\psi - \psi_m\|_2^2 + \Phi_0^2 \frac{D_m}{n}\Esp(\Delta)\right) + \frac{\tilde{C}}{n},\]
where $\psi_m$ is the orthogonal projection of $\psi$ on $S_m$, $C$ depends only on $\kappa$ and $\tilde{C}$ is a constant that depends on $\kappa, \Phi_0, \sigma_0$ and $\|\psi\|_\infty$.
\end{theorem}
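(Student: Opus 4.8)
The plan is to run the classical penalized least–squares scheme, adapted to the fact that here the penalty is itself random. Set $\nu_n(t)=\frac1n\sum_{i=1}^n\Delta_i t(X_i')-\langle t,\psi\rangle$; by \eqref{eq:Esperance_et_psi} this is a centred linear empirical process, and from \eqref{eq:Contraste} one has $\gamma_n(t)=\|t-\psi\|_2^2-\|\psi\|_2^2-2\nu_n(t)$ for every $t\in\mathbb L^2(\cercle)$. First I would write the inequality defining $\hat m$ against an arbitrary fixed competitor $m\in\mathcal M_n$, substitute this identity and cancel $\|\psi\|_2^2$, which gives
\[ \|\hat\psi_{\hat m}-\psi\|_2^2\ \leq\ \|\hat\psi_m-\psi\|_2^2+2\nu_n(\hat\psi_{\hat m}-\hat\psi_m)+\widehat{\text{pen}}(m)-\widehat{\text{pen}}(\hat m). \]
Everything then hinges on the fluctuation term $2\nu_n(\hat\psi_{\hat m}-\hat\psi_m)$.

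Since the sieves are nested, $\hat\psi_{\hat m}-\hat\psi_m\in S_{m^\ast}$ with $m^\ast=\max(m,\hat m)$, so $\nu_n(\hat\psi_{\hat m}-\hat\psi_m)\leq\|\hat\psi_{\hat m}-\hat\psi_m\|_2\,\bar\nu_{m^\ast}$, where $\bar\nu_{m'}:=\sup\{\nu_n(t):t\in S_{m'},\|t\|_2\leq1\}$ and, in an orthonormal basis, $\bar\nu_{m'}^2=\sum_{\lambda\in\Lambda_{m'}}\nu_n(\varphi_\lambda)^2$. Using $2ab\leq\tfrac14 a^2+4b^2$, the bound $\|\hat\psi_{\hat m}-\hat\psi_m\|_2^2\leq2\|\hat\psi_{\hat m}-\psi\|_2^2+2\|\hat\psi_m-\psi\|_2^2$, and absorbing $\tfrac12\|\hat\psi_{\hat m}-\psi\|_2^2$ into the left-hand side, I would reach $\|\hat\psi_{\hat m}-\psi\|_2^2\leq3\|\hat\psi_m-\psi\|_2^2+8\bar\nu_{m^\ast}^2+2\widehat{\text{pen}}(m)-2\widehat{\text{pen}}(\hat m)$. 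Then I would introduce the deterministic surrogate $p(m'):=K\Phi_0^2 D_{m'}\Esp(\Delta)/n$, write $8\bar\nu_{m^\ast}^2=8(\bar\nu_{m^\ast}^2-p(m^\ast))_+ +8p(m^\ast)$ and use $p(m^\ast)\leq p(m)+p(\hat m)$ (monotonicity of $m\mapsto D_m$). On the event $E_1:=\{\frac1n\sum_{i=1}^n\Delta_i\geq\tfrac12\Esp(\Delta)\}$ one has $\widehat{\text{pen}}(\hat m)\geq\tfrac12\kappa\Phi_0^2 D_{\hat m}\Esp(\Delta)/n$, so $8p(\hat m)\leq2\widehat{\text{pen}}(\hat m)$ as soon as $8K\leq\kappa$; this is exactly where the assumption $\kappa>8$ enters, since it leaves room to take $K$ slightly larger than $1$. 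Thus on $E_1$,
\[ \|\hat\psi_{\hat m}-\psi\|_2^2\ \leq\ 3\|\hat\psi_m-\psi\|_2^2+8\sum_{m'\in\mathcal M_n}\big(\bar\nu_{m'}^2-p(m')\big)_+ +8p(m)+2\widehat{\text{pen}}(m). \]

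The decisive step — and the one I expect to be the main obstacle — is the uniform deviation bound $\sum_{m'\in\mathcal M_n}\Esp\big[(\bar\nu_{m'}^2-p(m'))_+\big]\leq C/n$. For fixed $m'$, $\bar\nu_{m'}$ is the supremum of $\nu_n$ over the unit ball of $S_{m'}$, to which I would apply the Talagrand/Klein--Rio concentration inequality (\cite{Talagrand}, \cite{Klein-Rio}) with the three parameters
\[ H^2=\frac{\Phi_0^2 D_{m'}\Esp(\Delta)}{n}\ \geq\ \Esp[\bar\nu_{m'}^2],\qquad v=\|\psi\|_\infty,\qquad b^2=\Phi_0^2 D_{m'}, \]
the bound on $\Esp[\bar\nu_{m'}^2]$ coming from $\Esp[\nu_n(\varphi_\lambda)^2]\leq\tfrac1n\Esp[\Delta_1\varphi_\lambda(X_1)^2]$ together with \eqref{eq:Linear_Sieves2}, the bound $v=\|\psi\|_\infty$ from $\Var(\Delta_1 t(X_1'))\leq\int t^2\psi\leq\|\psi\|_\infty$ for $\|t\|_2\leq1$ (this is where $\|\psi\|_\infty<\infty$ is needed), and $b$ from \eqref{eq:Linear_Sieves}. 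Taking the free constant in the squared version of the inequality close to $1$ gives $K$ close to $1$ and a remainder of the form $C\big(v n^{-1}e^{-c_1 nH^2/v}+b^2 n^{-2}e^{-c_2 nH/b}\big)$; since Assumption (A) forces $\Esp(\Delta)=\Inte f(x)\sigma(x)\dd x\geq\sigma_0>0$, we get $nH^2/v\geq c\,D_{m'}$ and $nH/b\geq c\sqrt n$, and summing over the at most $n$ distinct integers $D_{m'}\leq n$ yields a geometric series times a factor $O(1/n)$. Carrying the numerical constants through this estimate so that $\kappa>8$ remains admissible is the delicate part.

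It remains to take expectations and dispose of $E_1^c$. On $E_1$: $\Esp[\|\hat\psi_m-\psi\|_2^2]=\|\psi-\psi_m\|_2^2+\sum_{\lambda\in\Lambda_m}\Esp[\nu_n(\varphi_\lambda)^2]\leq\|\psi-\psi_m\|_2^2+\Phi_0^2 D_m\Esp(\Delta)/n$ by Pythagoras and the same variance computation, $\Esp[8p(m)+2\widehat{\text{pen}}(m)]=(8K+2\kappa)\Phi_0^2 D_m\Esp(\Delta)/n$, and the deviation term contributes $O(1/n)$. On $E_1^c$, whose probability is $\leq e^{-cn}$ by Bernstein's inequality for the i.i.d.\ Bernoullis $\Delta_i$ (again using $\Esp(\Delta)\geq\sigma_0>0$), I would bound $\|\hat\psi_{\hat m}-\psi\|_2^2$ and the residual penalty crudely by a power of $n$ via $|\hat a_\lambda|\leq\Phi_0\sqrt{D_{m'}}\leq\Phi_0\sqrt n$ and $D_{m'}\leq n$, so that their contribution is again $O(1/n)$. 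Collecting the terms gives $\Esp[\|\hat\psi_{\hat m}-\psi\|_2^2]\leq C(\kappa)\big(\|\psi-\psi_m\|_2^2+\Phi_0^2\tfrac{D_m}{n}\Esp(\Delta)\big)+\tilde C/n$ for each $m\in\mathcal M_n$, and taking the infimum over $m$ concludes. Alternatively, once Proposition~\ref{Penalisation_première} is available one could try to compare $\widehat{\text{pen}}$ to a valid deterministic penalty on $E_1$ and invoke it, but since the model–selection step itself uses the random penalty, the direct argument above seems cleaner.
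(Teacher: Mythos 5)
Your architecture matches the paper's almost exactly: split on whether the empirical $\frac1n\sum_i\Delta_i$ is close to $\Esp(\Delta)$, run a Talagrand-based oracle argument on the good event, and dispose of the bad event (whose probability decays exponentially by Hoeffding/Bernstein, since $\Esp(\Delta)\geq\sigma_0>0$) with the crude bound $\|\hat\psi_{\hat m}\|_2^2\leq\Phi_0^2D_{\hat m}\leq\Phi_0^2 n$ from \eqref{eq:Ineg9.9}. Your use of the one-sided event $E_1=\{\frac1n\sum\Delta_i\geq\tfrac12\Esp(\Delta)\}$ instead of the paper's two-sided $S_b$ is a genuine (small) simplification, and it is correct: only the lower bound is needed to control $\widehat{\text{pen}}(\hat m)$ from below, and $\Esp[\widehat{\text{pen}}(m)]$ is computed exactly, with no need for an upper bound on the empirical mean. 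The bounds $M_1=\Phi_0\sqrt{D_{m'}}$, $v=\|\psi\|_\infty$, $H^2=\Phi_0^2 D_{m'}\Esp(\Delta)/n$ and the ensuing geometric-series summation are also exactly as in the paper's Lemma~\ref{Lemme_de_Talagrand} and the proof of Proposition~\ref{Penalisation_première}.

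However, the constant-tracking — which you yourself flag as ``the delicate part'' — has a genuine gap, and as written your argument does not reach $\kappa>8$. Two issues compound. First, you write that the free constant in the squared Talagrand inequality can be taken ``close to $1$,'' but the second inequality of Lemma~\ref{Talagrand} subtracts $2(1+2\epsilon)H^2$, so the surrogate $p(m')=K\Phi_0^2 D_{m'}\Esp(\Delta)/n$ must have $K=2(1+2\epsilon)\to 2$, not $1$, as $\epsilon\to 0$. Second, you commit to the fixed Young split $2ab\leq\tfrac14 a^2+4b^2$ together with the crude $\|\hat\psi_{\hat m}-\hat\psi_m\|_2^2\leq 2\|\hat\psi_{\hat m}-\psi\|_2^2+2\|\hat\psi_m-\psi\|_2^2$; this is the choice $x=4$ in $2ab\leq x^{-1}a^2+xb^2$, and it is the smallest $x$ for which this particular absorption closes. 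With those fixed choices your requirement becomes $8p(\hat m)\leq 2\widehat{\text{pen}}(\hat m)$ on $E_1$, i.e.\ $\kappa\geq 8K=16(1+2\epsilon)$, hence $\kappa>16$. To recover $\kappa>8$ you need to carry $x$ as a free parameter and use the sharper two-parameter inequality $\|\hat\psi_{\hat m}-\psi_m\|_2^2\leq(1+y^{-1})\|\hat\psi_{\hat m}-\psi\|_2^2+(1+y)\|\psi_m-\psi\|_2^2$, exactly as the paper does in the proof of Proposition~\ref{Penalisation_première} (leading to $C_x=\tfrac{x+1}{x-1}$ and the requirement $x\,\text{p}(m,m')\leq\text{pen}(m)+\text{pen}(m')$, i.e.\ $\kappa>4x$ for the deterministic penalty and $\kappa>\tfrac{4x}{1-b}$ after accounting for $E_1$, so that with $x\to 1^+$, $\epsilon\to 0^+$, $b\to 0^+$ one can indeed approach $8$). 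A related cosmetic point: the paper compares $\gamma_n(\hat\psi_{\hat m})+\widehat{\text{pen}}(\hat m)$ against $\gamma_n(\psi_m)+\widehat{\text{pen}}(m)$ (the deterministic projection $\psi_m$), not against $\gamma_n(\hat\psi_m)+\widehat{\text{pen}}(m)$; either works, but using $\psi_m$ is slightly cleaner because the bias term $\|\psi_m-\psi\|_2^2$ appears directly without an extra $\Esp[\|\hat\psi_m-\psi\|_2^2]$ step.
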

The proof can be found in Section~\ref{Proof Penalisation_aleatoire}.\\
This means that $\hat{\psi}_{\hat{m}}$ satisfies an oracle inequality, which makes the best  bias-variance tradeoff. If we are in the case of the trigonometric spaces and $\psi$ is a Sobolev function then we obtain the rate of convergence stated in the next result:
\begin{corollary}\label{Vitesse_Adaptative_pour_psi}
Suppose the spaces $(S_m)_{m\in \mathcal{M}_n}$ are the trigonometric spaces, Assumption (A), $\|\psi\|_\infty < +\infty$ and that $\psi$ is an element of $\hspace{2pt} W(\beta_{\psi},L)$. Then we obtain

 \[ \Esp(\|\psi - \hat{\psi}_{\hat{m}} \|_2^2)  = \mathcal{O}\left( n^{\frac{-2\beta_{\psi}}{2\beta_{\psi} +1}} \right).\]
\end{corollary}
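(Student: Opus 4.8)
The strategy is to apply the oracle inequality of Theorem~\ref{Penalisation_aleatoire} and then bound the infimum on its right-hand side by evaluating it at a single well-chosen index.

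First I would fix $m_n \in \mathcal{M}_n$ as in Lemma~\ref{Lemme_Sobolev}, namely such that $D_{m_n} = \lfloor n^{1/(2\beta_\psi + 1)} \rfloor$ (recall that for the trigonometric spaces $D_m = 2m+1$, so strictly speaking one takes the largest admissible $m$ with $D_m \le n^{1/(2\beta_\psi+1)}$, which only changes constants; for $n$ large enough this $m_n$ indeed lies in $\mathcal{M}_n = \{1, \dots, [n/2]-1\}$ since $n^{1/(2\beta_\psi+1)} = o(n)$). The hypotheses of Theorem~\ref{Penalisation_aleatoire} are met here: the trigonometric spaces are nested, $\|\psi\|_\infty < +\infty$ by assumption, and Assumption~(A) holds. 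Hence
\[
\Esp\left( \|\hat\psi_{\hat m} - \psi\|_2^2 \right) \le C \inf_{m \in \mathcal{M}_n} \left( \|\psi - \psi_m\|_2^2 + \Phi_0^2 \frac{D_m}{n} \Esp(\Delta) \right) + \frac{\tilde C}{n},
\]
and, bounding the infimum by its value at $m_n$,
\[
\Esp\left( \|\hat\psi_{\hat m} - \psi\|_2^2 \right) \le C \left( \|\psi - \psi_{m_n}\|_2^2 + \Phi_0^2 \frac{D_{m_n}}{n} \Esp(\Delta) \right) + \frac{\tilde C}{n}.
\]

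Next I would control the two terms in the bracket. For the bias term, Lemma~\ref{Lemme_Sobolev} gives directly $\|\psi - \psi_{m_n}\|_2^2 \le \frac{L^2}{\pi^{2\beta_\psi}} n^{-2\beta_\psi/(2\beta_\psi+1)}$. For the variance term, since $\Delta$ is Bernoulli we have $\Esp(\Delta) \le 1$, and by the choice of $m_n$, $D_{m_n}/n \le n^{1/(2\beta_\psi+1)}/n = n^{-2\beta_\psi/(2\beta_\psi+1)}$. Finally, since $2\beta_\psi/(2\beta_\psi+1) < 1$, the residual term $\tilde C/n$ is $o\!\left(n^{-2\beta_\psi/(2\beta_\psi+1)}\right)$, hence negligible. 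Collecting the three bounds yields a constant (depending on $C$, $\tilde C$, $\Phi_0$, $L$, $\beta_\psi$) times $n^{-2\beta_\psi/(2\beta_\psi+1)}$, which is the claimed rate.

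There is essentially no serious obstacle here: the corollary is a direct consequence of Theorem~\ref{Penalisation_aleatoire} combined with Lemma~\ref{Lemme_Sobolev}. The only point requiring a little care is the bookkeeping about admissible dimensions — that an index $m_n$ with $D_{m_n}$ of order $n^{1/(2\beta_\psi+1)}$ genuinely belongs to $\mathcal{M}_n$, and that replacing the idealized value $\lfloor n^{1/(2\beta_\psi+1)}\rfloor$ by the nearest admissible $2m_n+1$ costs only multiplicative constants in both the bias and variance bounds — but this is entirely routine.
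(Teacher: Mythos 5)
Your proof is correct and follows exactly the route the paper indicates: apply the oracle inequality of Theorem~\ref{Penalisation_aleatoire}, then specialize the infimum to the $m_n$ from Lemma~\ref{Lemme_Sobolev} and use $\Esp(\Delta)\le 1$ together with the choice $D_{m_n}\approx n^{1/(2\beta_\psi+1)}$ to match both the bias and variance terms to the target rate. The remark about rounding $D_{m_n}$ to the nearest odd admissible value is the only subtlety and you handle it correctly.
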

This is proved using Theorem~\ref{Penalisation_aleatoire} and Lemma~\ref{Lemme_Sobolev}.\\
This last result tells us that this new estimator is adaptive, meaning it achieves the classical rate of convergence for the estimation of a Sobolev function without the knowledge of its smoothness. 
With this result and using Theorem~\ref{MISE estimateur seuil} we can show that the estimation of $f$ also satisfies an oracle inequality. The next proposition states exactly this:

\begin{proposition}\label{Adaptation seuil}
Suppose the spaces $(S_m)_{m\in \mathcal{M}_n}$ are nested,  $\|f\|_\infty < +\infty$ and Assumption (A).
Moreover we take $\hat{m}$ and $\widehat{\text{pen}}$ defined in Theorem~\ref{Penalisation_aleatoire}.
If $\hat{f}^* = \frac{(\hat{\psi}_{\hat{m}})_+}{\hat{\sigma} \lor n^{-1/2}}$ we obtain the following oracle inequality for the MISE of $\hat{f}^*$:
\[ \Esp\left( \|\hat{f}^* - f \|_2^2 \right) \leq  K\inf_{m\in \mathcal{M}_n} \left(\|\psi - \psi_m\|_2^2 +  \Phi_0^2 \frac{D_m}{n}\Esp(\Delta)\right) + \frac{\tilde{K}}{n},\]
where $\psi_m$ is the orthogonal projection of $\psi$ on $S_m$, $K$ depends only on $\kappa$ and $\tilde{K}$ depends on $\Phi_0, \sigma_0$ and $\|f\|_\infty$.
\end{proposition}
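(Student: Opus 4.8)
The plan is to mimic the proof of Theorem~\ref{MISE estimateur seuil} verbatim, simply replacing the deterministic level $m$ by the data-driven level $\hat m$ and invoking the oracle inequality of Theorem~\ref{Penalisation_aleatoire} in place of the direct bias--variance bound. Note first that the hypotheses of Theorem~\ref{Penalisation_aleatoire} are met: since $0\le\sigma\le 1$ we have $\|\psi\|_\infty\le\|f\|_\infty<+\infty$, so Theorem~\ref{Penalisation_aleatoire} applies to $\hat\psi_{\hat m}$.

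Set $\tilde f^{*}=\hat\psi_{\hat m}/(\hat\sigma\lor n^{-1/2})$, so that $\hat f^{*}=(\tilde f^{*})_{+}$. As in Section~\ref{Preuve MISE estimateur seuil} (this uses only $f\ge 0$), $|\hat f^{*}(x)-f(x)|\le|\tilde f^{*}(x)-f(x)|$ for every $x$, hence $\|\hat f^{*}-f\|_2^2\le\|\tilde f^{*}-f\|_2^2$. Then the same triangle inequality as in Theorem~\ref{MISE estimateur seuil},
\[ |\tilde f^{*}-f|\le\left|\frac{\hat\psi_{\hat m}-\psi}{\hat\sigma\lor n^{-1/2}}\right|+\left|\frac{\sigma-\hat\sigma\lor n^{-1/2}}{\hat\sigma\lor n^{-1/2}}\,f\right|, \]
gives $\|\tilde f^{*}-f\|_2^2\le 2A_1^{*}+2A_2$ with $A_1^{*}=\big\|(\hat\psi_{\hat m}-\psi)/(\hat\sigma\lor n^{-1/2})\big\|_2^2$ and $A_2=\big\|\frac{\sigma-\hat\sigma\lor n^{-1/2}}{\hat\sigma\lor n^{-1/2}}f\big\|_2^2$. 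The term $A_2$ does not involve $\hat\psi_{\hat m}$ at all, so it is exactly the quantity already bounded in the proof of Theorem~\ref{MISE estimateur seuil}, yielding $\Esp(A_2)\le C/n$ with $C$ depending only on $\Phi_0,\sigma_0,\|f\|_\infty$ (using $\|f\|_2^2\le\|f\|_\infty$ on $\cercle$).

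It remains to bound $\Esp(A_1^{*})$, which I split over the event $E=\{\omega:\ \forall x\in\cercle,\ \hat\sigma(x)\ge\sigma_0/2\}$ and its complement. On $E$ one has $\hat\sigma\lor n^{-1/2}\ge\sigma_0/2$ pointwise, so $A_1^{*}\mathds 1_E\le(4/\sigma_0^2)\|\hat\psi_{\hat m}-\psi\|_2^2$; taking expectations and plugging in the oracle inequality of Theorem~\ref{Penalisation_aleatoire} bounds $\Esp(A_1^{*}\mathds 1_E)$ by $(4/\sigma_0^2)\big(C\inf_{m\in\mathcal M_n}(\|\psi-\psi_m\|_2^2+\Phi_0^2 D_m\Esp(\Delta)/n)+\tilde C/n\big)$, which is of the announced form. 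On $E^{c}$ one only has $\hat\sigma\lor n^{-1/2}\ge n^{-1/2}$, hence $A_1^{*}\mathds 1_{E^{c}}\le n\|\hat\psi_{\hat m}-\psi\|_2^2\mathds 1_{E^{c}}\le 2n\big(\|\hat\psi_{\hat m}\|_2^2+\|\psi\|_2^2\big)\mathds 1_{E^{c}}$. The key observation is the crude deterministic bound $\|\hat\psi_m\|_2^2\le\Phi_0^2 D_m\le\Phi_0^2 n$, valid for every $m\in\mathcal M_n$: by Lemma~\ref{lemme_sur_les_coefs}, $\|\hat\psi_m\|_2^2=\frac1n\sum_{i=1}^n\Delta_i\hat\psi_m(X_i')\le\|\hat\psi_m\|_\infty\le\Phi_0\sqrt{D_m}\,\|\hat\psi_m\|_2$ by \eqref{eq:Linear_Sieves}, so $\|\hat\psi_m\|_2\le\Phi_0\sqrt{D_m}$, and $D_m\le n$. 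Being uniform over $\mathcal M_n$, this bound also holds for the random $\hat m$; together with $\|\psi\|_2^2\le\|\psi\|_\infty\le\|f\|_\infty$ it gives $A_1^{*}\mathds 1_{E^{c}}\le c\,n^2\mathds 1_{E^{c}}$ for a constant $c=c(\Phi_0,\|f\|_\infty)$. Finally $E^{c}\subset\{\|\hat\sigma-\sigma\|_\infty\ge\sigma_0/2\}$, and Lemma~\ref{Lemme} shows that $\Proba(E^{c})$ decays faster than any power of $n$ (in particular $\Proba(E^{c})=\mathcal O(n^{-3})$), so $\Esp(A_1^{*}\mathds 1_{E^{c}})=\mathcal O(1/n)$.

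Summing the three contributions yields the stated oracle inequality, with $K$ a multiple of $C/\sigma_0^2$ (hence depending only on $\kappa$) and $\tilde K$ collecting the remaining $1/n$ terms (depending on $\Phi_0,\sigma_0,\|f\|_\infty$, and on $\kappa$ through $\tilde C$). The only point that is genuinely new compared with Theorem~\ref{MISE estimateur seuil} is the handling of the random index $\hat m$ on the low-probability event $E^{c}$: one must ensure the crude $\mathbb L^2$-bound on $\hat\psi_m$ is uniform over $\mathcal M_n$ (so that it survives the substitution $m\rightsquigarrow\hat m$) and that $\Proba(E^{c})$ is small enough to absorb the resulting $n^2$ factor. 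Both are immediate from \eqref{eq:Linear_Sieves}, $D_m\le n$, and Lemma~\ref{Lemme}, so this bookkeeping, though the main thing to be careful about, is the only real obstacle; the rest is a transcription of the earlier argument.
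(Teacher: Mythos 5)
Your proof is correct and follows essentially the same route as the paper's: decompose over $E$ and $E^c$, invoke the oracle inequality of Theorem~\ref{Penalisation_aleatoire} on $E$, and on $E^c$ absorb the $n^2$ factor coming from the uniform crude bound $\|\hat\psi_m\|_2^2\le\Phi_0^2 D_m\le n\Phi_0^2$ into the exponentially small $\Proba(E^c)$ supplied by Lemma~\ref{Lemme}. The small variations (handling the $\hat\sigma$-error term separately before the $E/E^c$ split; deriving the crude bound via $\|\hat\psi_m\|_2^2=\tfrac1n\sum_i\Delta_i\hat\psi_m(X_i')\le\|\hat\psi_m\|_\infty$ rather than the Cauchy--Schwarz argument of \eqref{eq:Ineg9.9}) are immaterial, and you correctly make explicit two checks the paper leaves implicit, namely $\|\psi\|_\infty\le\|f\|_\infty$ and the $\kappa$-dependence inherited by $\tilde K$.
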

The proof can be found in Section~\ref{Proof Adaptation seuil}.\\
If we are in the case of the trigonometric spaces and $\psi$ is a Sobolev function then we obtain a rate of convergence for $\hat{f}^*$, stated in the next corollary:

\begin{corollary}\label{Coro deuxieme estimateur}
Suppose the spaces $(S_m)_{m\in \mathcal{M}_n}$ are the trigonometric spaces,\\  $\|f\|_\infty < +\infty$, Assumption (A) and $\psi$ is an element of $W(\beta_{\psi},L)$. Then if $\hat{f}^* = \frac{(\hat{\psi}_{\hat{m}})_+}{\hat{\sigma} \lor n^{-1/2}}$ as in Proposition~\ref{Adaptation seuil} we obtain

\[\Esp(\|\hat{f}^* -f\|_2^2) =  \mathcal{O}\left( n^{\frac{-2\beta_{\psi}}{2\beta_{\psi} +1}} \right).\]
\end{corollary}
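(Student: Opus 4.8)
The idea is to read off the rate from the oracle inequality of Proposition~\ref{Adaptation seuil}, exactly as Corollary~\ref{Vitesse_Adaptative_pour_psi} was obtained from Theorem~\ref{Penalisation_aleatoire}. The trigonometric spaces are nested and, since $\sigma \le 1$, we have $\|\psi\|_\infty = \|f\sigma\|_\infty \le \|f\|_\infty < +\infty$; together with Assumption (A) this puts us in the setting of Proposition~\ref{Adaptation seuil}, so with $\hat{f}^* = (\hat{\psi}_{\hat{m}})_+/(\hat{\sigma}\lor n^{-1/2})$,
\[ \Esp\left( \|\hat{f}^* - f \|_2^2 \right) \leq  K\inf_{m\in \mathcal{M}_n} \left(\|\psi - \psi_m\|_2^2 +  \Phi_0^2 \frac{D_m}{n}\Esp(\Delta)\right) + \frac{\tilde{K}}{n}. \]

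First I would bound $\Esp(\Delta) \le 1$, since $\Delta$ is a Bernoulli variable. Then, instead of keeping the infimum over all of $\mathcal{M}_n$, I would simply retain the single competitor $m = m_n$ appearing in Lemma~\ref{Lemme_Sobolev}, the one with $D_{m_n} = \lfloor n^{1/(2\beta_{\psi}+1)}\rfloor$ (for the trigonometric collection $D_m = 2m+1$ is odd, so strictly one takes $m_n$ so that $D_{m_n}$ is the largest odd integer not exceeding $\lfloor n^{1/(2\beta_{\psi}+1)}\rfloor$, which changes only constants; and since $\beta_\psi>0$ one has $m_n \le [n/2]-1$ for $n$ large, so $m_n\in\mathcal{M}_n$). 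This yields
\[ \Esp\left( \|\hat{f}^* - f \|_2^2 \right) \leq  K\left(\|\psi - \psi_{m_n}\|_2^2 +  \Phi_0^2 \frac{D_{m_n}}{n}\right) + \frac{\tilde{K}}{n}. \]
Now I would apply Lemma~\ref{Lemme_Sobolev}, which gives $\|\psi - \psi_{m_n}\|_2^2 \le \frac{L^2}{\pi^{2\beta_{\psi}}} n^{-2\beta_{\psi}/(2\beta_{\psi}+1)}$, and observe that $\Phi_0^2 D_{m_n}/n \le \Phi_0^2 n^{1/(2\beta_{\psi}+1)}/n = \Phi_0^2 n^{-2\beta_{\psi}/(2\beta_{\psi}+1)}$. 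Since also $1/n = \mathcal{O}(n^{-2\beta_{\psi}/(2\beta_{\psi}+1)})$, all three terms are of order $n^{-2\beta_{\psi}/(2\beta_{\psi}+1)}$, which is the claimed rate.

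There is no genuine difficulty here: the statement is a routine combination of Proposition~\ref{Adaptation seuil} and Lemma~\ref{Lemme_Sobolev}. The only point that warrants a line of care is the discretization remark above, namely that the prescribed dimension $\lfloor n^{1/(2\beta_{\psi}+1)}\rfloor$ is attainable up to constants within the trigonometric sieves and that the corresponding index lies in $\mathcal{M}_n$ for $n$ large enough, so that it is a legitimate competitor in the oracle infimum.
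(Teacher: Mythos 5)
Your proof is correct and follows essentially the same route as the paper's: apply the oracle inequality of Proposition~\ref{Adaptation seuil}, bound $\Esp(\Delta)\le 1$, specialize the infimum to the competitor $m_n$ with $D_{m_n}\approx\lfloor n^{1/(2\beta_\psi+1)}\rfloor$, and invoke Lemma~\ref{Lemme_Sobolev} — exactly the calculation of Section~\ref{Proof Coro estimateur seuil}. Your side remarks (that $\|f\|_\infty<\infty$ implies $\|\psi\|_\infty<\infty$ since $\sigma\le 1$, and that for the trigonometric collection one takes the nearest admissible odd dimension, affecting only constants) are accurate points of care that the paper passes over silently.
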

This is proved using Proposition~\ref{Adaptation seuil} and the same calculations as in Section~\ref{Proof Coro estimateur seuil}.\\
Proposition~\ref{Adaptation seuil} establishes that $\hat{f}^*$ satisfies an oracle inequality, ensuring the best bias-variance tradeoff up to a constant and a remainder term of order $n^{-1}$. Moreover, Corollary~\ref{Coro deuxieme estimateur} states that the data-driven estimator achieves the same convergence rate as the one in Corollary~\ref{Coro estimateur seuil}, without requiring prior knowledge of the smoothness of $\psi$. Finally, if $f \in W(\beta_f, L)$ and $\beta_\psi \geq \beta_f$ (a condition that holds as soon as $\sigma$ is sufficiently smooth) then this convergence rate corresponds to the optimal rate for estimating an univariate density function belonging to a Sobolev class of regularity $\beta_f$.

\section{Simulations}\setcounter{equation}{0}\label{SectionSimulations}
We numerically implement the procedure of Theorem~\ref{Penalisation_aleatoire} to compute $\hat{\psi}_{\hat{m}}$ and then $\hat{f}^*$. We recall the quantities necessary for the estimation. We consider the trigonometric spaces for the linear sieves, thus we know that $D_m = 2m+1$, $\{ \varphi_\lambda\}_{\indices}$ is the trigonometric basis of $S_m$, with $\Lambda_m = \{0,\dots,2m\}$  and $\Phi_0 = \frac{1}{\sqrt{2\pi}}$. Then the set of possible models is represented by $\mathcal{M}_n = \{ 1,\cdots, \lfloor n/2 \rfloor -1 \}$. We then compute for all $m$ in $\mathcal{M}_n$ the penalty term evaluated in $m$ and the contrast evaluated in the estimator $\hat{\psi}_m$.
To compute the penalty terms we use the package CAPUSHE that calibrates the constant $\kappa$ using slope heuristics (see \citelink{Baudry-2012}{Baudry \& al (2012)}). For the contrast term using \eqref{eq:Contraste} and \eqref{eq:Def_hat_psi_m} we can show that 
\[ \gamma_n(\hat{\psi}_m) = -\sum_{\indices} \hat{a}_{\lambda}^2 = - \sum_{\indices} \left(\frac{1}{n}\sum_{i=1}^n  \Delta_i \philambda(X'_i)\right)^2. \]
With that we have
 \[ \hat{m} = \argmin{m\in \mathcal{M}_n} \left(-\sum_{\indices} \hat{a}_{\lambda}^2   + \kappa \frac{2m+1}{2\pi n}\left(\frac{1}{n} \sum_{i=1}^n \Delta_i \right) \right),\]
and it is easily computable. Then we compute $\hat{\psi}_{\hat{m}} = \sum_{\lambda \in \Lambda_{\hat{m}}}\hat{a}_\lambda \philambda$ and finally $\hat{f}^*  = \frac{(\hat{\psi}_{\hat{m}})_+}{\hat{\sigma} \lor n^{-1/2}}$ where $\hat{\sigma}$ is defined in \eqref{eq:def_har_sigma}.\\
To test our estimator we choose to estimate the density probability function of the Von Mises distribution $M(\mu,k)$. We recall the density of this distribution
\[ f(x) = \frac{1}{2\pi I_0(k)}e^{k \cos(x-\mu)},\]
where $\mu$ is the mean direction, $k$ is the concentration parameter and $I_0$ is the modified Bessel function of the first kind of order $0$ such that $f$ is a density on $\cercle$. This distribution is the circular equivalent of the Gaussian distribution on the real line. We consider different types of censorship and use a Monte Carlo method with $N=100$ samples of different sizes to estimate the MISE of our estimator. The models we consider are the following\\
Model $1$: $X\sim M(\pi,1)$, $L  \text{ and } U \text{ are independent}$, $L \sim  M\left(\frac{2\pi}{3},1\right)$ and $U \sim  M\left(\frac{4\pi}{3}, 1\right)$.\\
Model $2$: $X\sim M(\pi,1)$, $L \text{ and } U \text{ are independent}$, $L \sim  M\left(\frac{4\pi}{3},1\right)$ and $U \sim  M\left(\frac{2\pi}{3}, 1\right)$. This is Model $1$ where we exchanged the role of $L$ and $U$ .\\
Model $3$: $X\sim \frac{6}{10}M\left(\frac{\pi}{3},3\right) +  \frac{4}{10}M\left(\frac{15\pi}{9},3\right)$, $L  \text{ and } U \text{ are independent}$, \\$L \sim  M\left(\frac{2\pi}{3},3\right)$ and $U \sim  M\left(\frac{4\pi}{3}, 3\right)$.\\
Model $4$: $X\sim M(\pi,1)$, $L$ and $U$ are deterministic and respectively equal to $\frac{2\pi}{3}$ and $\frac{4\pi}{3}$.\\
 In Table~\ref{table:1} we gather the values of our simulations, which are the estimations of the MISE, the mean percentages of censored values and the mean lengths of the censoring arcs in each simulation. Furthermore, in Figure~\ref{fig:Simu_de_base} we represent in each case a plot of our estimator and the density estimated. \\
\begin{table}[!h]
\begin{center}
\begin{tabular}{|c c c c c|}
\hline\multicolumn{5}{|c|}{MISE estimation}\\
\hline {Sample size} & {$n=50$} & {$n=200$} & {$n=500$} & {$n=1000$} \\
{Model 1} & {$0.056$} & {$0.019$} & {$0.007$} & {$0.004$} \\
{Model 2} & {$0.036$} & {$0.011$} & {$0.005$} & {$0.003$} \\
{Model 3} & {$0.123$} & {$0.072$} & {$0.051$} & {$0.043$} \\
{Model 4} & {$0.183$} & {$0.412$} & {$0.706$} & {$1.211$} \\
\hline \multicolumn{5}{|c|}{$\%$ censored data} \\
\hline {Sample size}  & {$n=50$} & {$n=200$} & {$n=500$} & {$n=1000$} \\
{Model 1} & {$44\%$} & {$44.09\%$} & {$43.787\%$} & {$44.03\%$} \\
{Model 2} & {$55.88\%$} & {$55.95\%$} & {$56.01\%$} & {$56.21\%$} \\
{Model 3} & {$85.32\%$} & {$85.92\%$} & {$86.34\%$} & {$86.03\%$} \\
{Model 4} & {$40.38\%$} & {$38.89\%$} & {$39.37\%$} & {$39.01\%$} \\
\hline\multicolumn{5}{|c|}{Length of censoring arc}\\
\hline {Sample size}  & {$n=50$} & {$n=200$} & {$n=500$} & {$n=1000$} \\
{Model 1} & {$3.49$} & {$3.46$} & {$3.48$} & {$3.48$} \\
{Model 2} & {$2.80$} & {$2.80$} & {$2.82$} & {$2.81$} \\
{Model 3} & {$4.10$} & {$4.09$} & {$4.09$} & {$4.10$} \\
{Model 4} & {$2.09$} & {$2.09$} & {$2.09$} & {$2.09$} \\
\hline
\end{tabular}
\end{center}
\caption{MISE estimation, mean percentage of censored data and mean length of the censoring arc for $N=100$ replications of simulated data of different sample size.}
\label{table:1}
\end{table}
\begin{figure}[!h]
\begin{subfigure}{.45\textwidth}
  \centering
   \includegraphics[width=6.5cm]{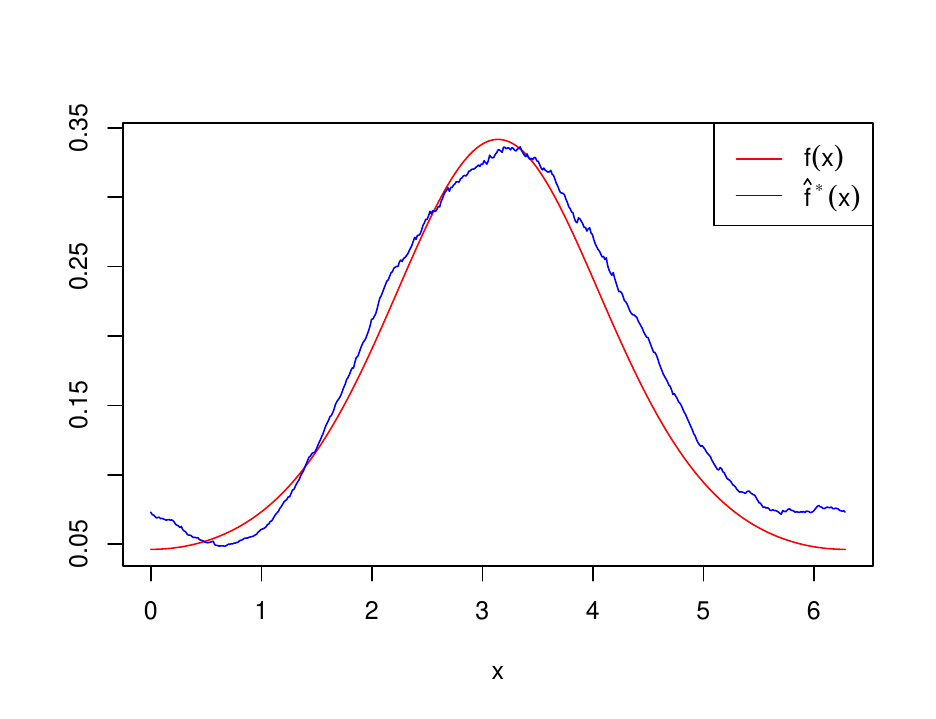}  
  \caption{Model $1$.}
  \label{fig:Sub_1_monitoring}
\end{subfigure}
\begin{subfigure}{.45\textwidth}
  \centering
  \includegraphics[width=6.5cm]{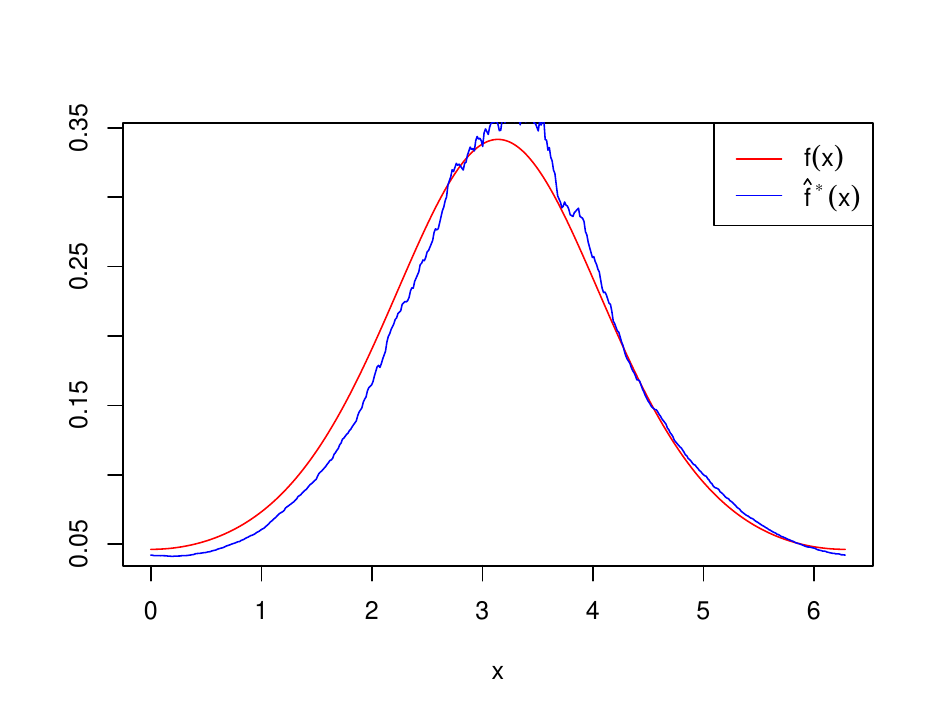}  
  \caption{Model $2$.}
  \label{fig:Sub_1_monitoring}
\end{subfigure}
\begin{subfigure}{.45\textwidth}
  \centering
  \includegraphics[width=6.5cm]{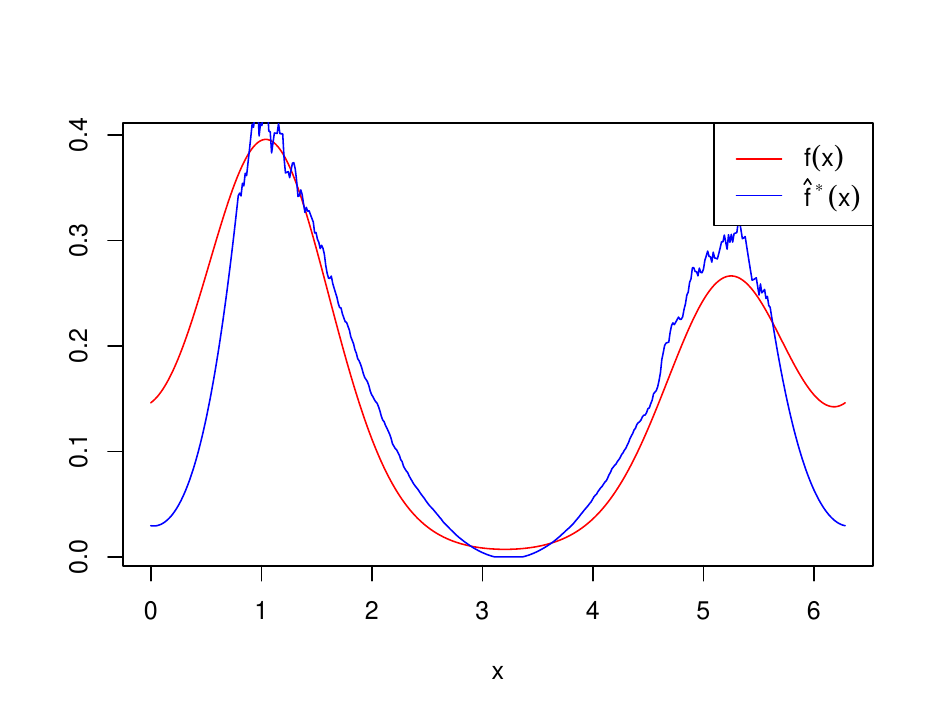}    
  \caption{Model $3$.}
  \label{fig:Sub_1_monitoring}
\end{subfigure}
\begin{subfigure}{.45\textwidth}
  \centering
  \includegraphics[width=6.5cm]{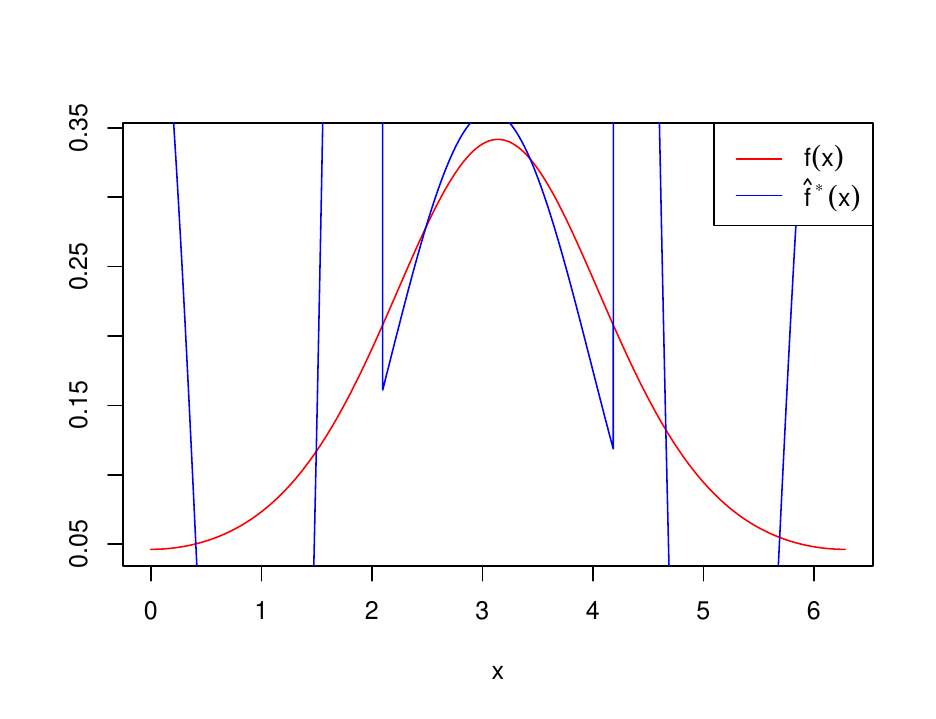} 
  \caption{Model $4$.}
  \label{fig:Sub_1_monitoring}
\end{subfigure}

\caption{Plot of the true density in red and the estimator in blue for a sample of size $n=500$ simulated data. The four plots represent the four different models presented at the beginning of the Section.}
\label{fig:Simu_de_base}
\end{figure}
\begin{figure}[!h]
  \centering
  \includegraphics[width=8cm]{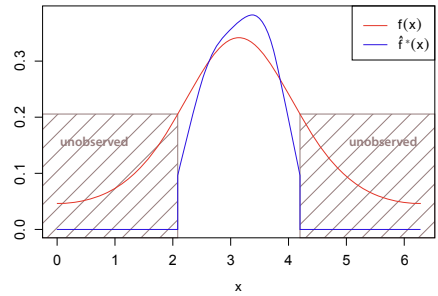} 
\caption{Plot of the true density and $\frac{(\hat{\psi}_{\hat{m}})_+}{\hat{\sigma}}\mathds{1}_{\hat{\sigma}\ne 0}$ for a sample of size $n=500$ simulated data for the Model $4$.}
\label{fig:deterministe_sans_seuil}
\end{figure}
Analyzing Table~\ref{table:1}, we observe that for the first three models, the MISE decreases towards $0$ as $n$ increases, which is consistent with the theoretical results. The third model exhibits a higher MISE due to the strong level of censoring. The last model corresponds to a deterministic case. In that situation, Assumption (A) is not satisfied, since on $[U,L]$, $\sigma$ is equal to $0$. Therefore, the theoretical guarantees no longer apply. In particular, we know that on $[U,L]$, $\hat{\sigma}(x) = 0$, which implies that on this interval, $\hat{f}^* = \sqrt{n} \hat{\psi}_{\hat{m}}$. This explains why the MISE increases with $n$. If it is known from the data that $L$ and $U$ are deterministic, one could instead use the following modified estimator $\frac{\hat{\psi}_{\hat{m}}}{\hat{\sigma}}\mathds{1}_{\{\hat{\sigma}\ne 0\}}$ which avoids boundary effects. Although this modification does not ensure that the MISE converges to $0$, it prevents it from diverging to infinity.\\
The plots in Figure~\ref{fig:Simu_de_base} show that the estimator performs quite satisfactorily for the first three models, even when the level of censoring is high and the underlying distribution is more challenging to estimate, such as in the bimodal case. Moreover, as previously discussed, when the censoring variables are deterministic, the estimation is poor over the censoring arc due to the threshold. Nevertheless, the estimation remains acceptable over the observable part of the circle. The modification we proposed for this specific case is illustrated in Figure~\ref{fig:deterministe_sans_seuil}.\\
Finally, we compare our estimator to the one introduced by  \citelink{Jammalamadaka-2009}{Jammalamadaka \& Mangalam (2009)}. To this end, we replicate their simulation procedure to estimate the parameters of a von Mises distribution. Their approach is parametric: it assumes knowledge of the underlying distributions form and directly estimates its parameters. In contrast, our method is nonparametric: we estimate the density of the observations first, and then use it to infer the parameters of the distribution. We simulate $L$ from a uniform circular distribution, and $U$ is defined as $L - \alpha$, so that the censoring arc always has length $\alpha$. We then generate $200$ samples of size $100$, each drawn from a $M(2,k)$ distribution, where we vary the concentration parameter $k$. The results are presented in Tables~\ref{Tab:Comparaison1} and~\ref{Tab:Comparaison2}.\\
\begin{table}[!h]
\begin{center}
\begin{tabular}{|c c c c c|}
\hline \multirow[t]{2}{*}{ Length of censoring arc $\alpha$ } & \multicolumn{2}{l}{ Estimation of $\mu$} & \multicolumn{2}{l|}{ Estimation of $k$} \\
& $\tilde{\mu}$ & $\hat{\mu}$ & $\tilde{k}$  & $\hat{k}$ \\
\hline 1 & {$1.985$} & {$2.005$} & {$1.047$} & {$1.041$} \\
 3 & {$2.006$}  & {$1.990 $}& {$1.037$} & {$1.021$} \\
\hline
\end{tabular}
\end{center}
\caption{Comparison of the estimation of $(\mu,k)$ when $\mu=2$ and $k=1$, $X \sim M(\mu,k), L$ follows the uniform distribution and $U= L-\alpha$ with $\alpha \in \{1,3\}$.  $(\tilde{\mu},\tilde{k})$ are from Jammalamadaka \& Mangalam and $(\hat{\mu},\hat{k})$ are our estimations.}
\label{Tab:Comparaison1}
\end{table}
\begin{table}[!h]
\begin{center}
\begin{tabular}{|c c c c c|}
\hline \multirow[t]{2}{*}{ Length of censoring arc  $\alpha$ } & \multicolumn{2}{l}{ Estimation of $\mu$} & \multicolumn{2}{l|}{ Estimation of $k$} \\
& $\tilde{\mu}$ & $\hat{\mu}$ & $\tilde{k}$ & $\hat{k}$ \\
\hline 1 & {$1.993$} & {$1.999 $}& {$3.110$} & {$2.936 $}\\
 3 &{$ 2.008$} & {$1.989$}& {$3.083$} & {$3.206$} \\
\hline
\end{tabular}
\end{center}
\caption{Comparison of the estimation of $(\mu,k)$ when $\mu=2$ and $k=3$, $X \sim M(\mu,k), L$ follows the uniform distribution and $U= L-\alpha$ with $\alpha \in \{1,3\}$. $(\tilde{\mu},\tilde{k})$ are from Jammalamadaka \& Mangalam and $(\hat{\mu},\hat{k})$ are our estimations.}
\label{Tab:Comparaison2}
\end{table}
Analyzing Table~\ref{Tab:Comparaison1} and Table~\ref{Tab:Comparaison2}, we notice that our estimations are quite satisfactory even though we do not make the assumption that the distribution is a Von Mises compared to Jammalamadaka and Mangalam.

\section{Proofs}\setcounter{equation}{0}\label{SectionPreuves}

\subsection{A circular interval lemma}\label{Circulaire}

\begin{lemma}\label{EqCirculaire}
If $L \ne U \in \cercle$ and $x\in \cercle$ then we have
\[ \mathds{1}_{\{x\in[L, U]\}} =  \mathds{1}_{\{L\leq x\}} - \mathds{1}_{\{U < x\}} + \mathds{1}_{\{L\geq U\}}.\]
\end{lemma}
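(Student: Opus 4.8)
The statement is purely combinatorial once the circular convention $\mathbb{S}^1 \cong [0,2\pi[$ is used, so the plan is a case analysis on the relative position of $L$ and $U$. Since $L \ne U$, exactly one of the two cases $L < U$ or $L > U$ occurs, and I would treat them separately, checking in each case that both sides of \eqref{eq:Circulaire} take the same value on each of the three subarcs into which $L$ and $U$ split the circle.

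First I would handle the case $L < U$. Here $\mathds{1}_{\{L\geq U\}} = 0$, and by the chosen (anticlockwise) convention $[L,U]$ is the ordinary closed interval $\{x : L \le x \le U\}$. I would then split $[0,2\pi[$ into $\{x < L\}$, $\{L \le x \le U\}$, and $\{U < x\}$: on the first, the left side is $0$ and the right side is $0 - 0 = 0$; on the second, the left side is $1$ and the right side is $1 - 0 = 1$ (note that $U < x$ fails throughout, in particular at $x=U$, and $L \le x$ holds, in particular at $x=L$ since $U<L$ is false); on the third, $x > U > L$ forces $L \le x$, so the right side is $1 - 1 = 0$, matching the left side. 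This settles the identity when $L<U$.

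Next I would handle the case $L > U$, where $\mathds{1}_{\{L\geq U\}} = 1$. By the convention recalled just before the lemma, $[L,U] = [L, 2\pi[\,\sqcup\,[0,U] = \mathbb{S}^1 \setminus\, ]U, L[$. I would split $[0,2\pi[$ into $\{0 \le x \le U\}$, $\{U < x < L\}$, and $\{L \le x < 2\pi\}$: on the first, $x \le U < L$ gives right side $0 - 0 + 1 = 1$ and $x \in [L,U]$; on the second, $U < x < L$ gives right side $0 - 1 + 1 = 0$ and $x \notin [L,U]$; on the third, $x \ge L > U$ gives right side $1 - 1 + 1 = 1$ and $x \in [L,U]$. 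In all subcases the two sides agree, which completes the case $L > U$ and hence the proof.

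The only thing that needs genuine care — and the only place an error could creep in — is the bookkeeping of strict versus non-strict inequalities at the endpoints $x = L$ and $x = U$ (the arc $[L,U]$ being closed at both ends) together with the wraparound description of $[L,U]$ when $L > U$; there is no real mathematical obstacle beyond this.
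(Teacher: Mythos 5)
Your proof is correct and takes essentially the same route as the paper: both split on the sign of $L-U$ and, in each case, use the explicit description of the arc $[L,U]$ to compare the two sides. The paper packages this as a short indicator-algebra computation (writing $\mathds{1}_{\{x\in[L,U]\}} = \mathds{1}_{\{x\in[L,U]\}}\mathds{1}_{\{L<U\}} + \mathds{1}_{\{x\in[L,U]\}}\mathds{1}_{\{L\geq U\}}$, substituting $\mathds{1}_{\{L\leq x\}}-\mathds{1}_{\{U<x\}}$ in the first piece and $\mathds{1}_{\{L\leq x\}}+\mathds{1}_{\{x\leq U\}}$ in the second, and simplifying), whereas you spell out the underlying subarc-by-subarc check; the mathematical content is identical.
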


\begin{proof}
\begin{align}
\nonumber
\mathds{1}_{\{x\in[L, U]\}} &= \mathds{1}_{\{x\in[L,U]\}}\mathds{1}_{\{L < U\}} + \mathds{1}_{\{x\in[L,U]\}}\mathds{1}_{\{L\geq U\}}\\
\nonumber
&= (\mathds{1}_{\{L\leq x\}} -\mathds{1}_{\{U < x\}})\mathds{1}_{\{L < U\}} + (\mathds{1}_{\{L\leq x\}} +\mathds{1}_{\{x\leq U\}})\mathds{1}_{\{L\geq U\}}\\
\nonumber
&= \mathds{1}_{\{L\leq x\}}(\mathds{1}_{\{L < U\}} + \mathds{1}_{\{L\geq U\}}) - \mathds{1}_{\{U < x\}}(\mathds{1}_{\{L < U\}} + \mathds{1}_{\{L\geq U\}}) + \mathds{1}_{\{L\geq U\}}\\
\nonumber
&= \mathds{1}_{\{L\leq x\}} - \mathds{1}_{\{U < x\}} + \mathds{1}_{\{L\geq U\}}.
\end{align}
\end{proof}

\subsection{Proof of Lemma~\ref{lemme_sur_les_coefs}} \label{Preuve_des_coefs}

We recall the definitions of $\hat{\psi}_m$ and $\psi_m$:
\begin{equation}\nonumber
\left\{\begin{array}{ll}
        \psi_m = \sum_{\indices} a_{\lambda}\philambda \text{, where } a_{\lambda} = <\psi, \philambda> ,\\
        \hat{\psi}_m = \argmin{t\in S_m} \hspace{3pt} \gamma_n(t) = \sum_{\indices} \hat{a}_{\lambda}\philambda \text{,}
    \end{array}
\right.
\end{equation}
where $\gamma_n : t\in \mathbb{L}^2(\cercle) \mapsto \|t\|_2^2 - \frac{2}{n}\sum_{i=1}^n \Delta_i t(X_i)$.\\
Set $\indices$, we prove here that $\hat{a}_\lambda= \tilde{a}_\lambda = \frac{1}{n} \sum_{\indices} \Delta_i \philambda(X'_i)$.\\
For $t$ a function in $S_m$, its representation in the basis $(\philambda)_{\indices}$ is
\begin{equation}\label{eq:t_dans_la_base_de_S_m}
 t = \sum_{\indices} b_{t,\lambda}\philambda \text{, where } b_{t,\lambda} = <t, \philambda>.
\end{equation}
Thus we will consider the restriction of $\gamma_n$ on $S_m$, where we represent every element of $S_m$ with its coordinates in the basis $(\philambda)_{\indices}$. This gives the following definition, for $t \in S_m$ like in \eqref{eq:t_dans_la_base_de_S_m}:
\[ \gamma_n : (b_{t,\lambda})_{\indices} \mapsto \sum_{\indices}\left( b^2_{t,\lambda} - \frac{2 b_{t,\lambda}}{n}\sum_{i=1}^n \Delta_i \philambda(X_i') \right) .\]
Since $\gamma_n$ is a polynomial on its inputs, we compute the partial derivatives. If $b_t=(b_{t,\lambda})_{\indices}$ and $\frac{\partial}{ \partial \lambda}$ is the partial derivative over the $\lambda$-th coordinate, then for $\tau\in\Lambda_m$ we have
\[  \frac{\partial \gamma_n (b_t)}{\partial \tau} = 2b_{t,\tau} - \frac{2}{n}\sum_{i=1}^n \Delta_i \varphi_{\tau}(X_i).\]
Now since $\hat{\psi}_m$ is an extremum of $\gamma_n$, it verifies $\nabla \gamma_n(\hat{\psi}_m) = \nabla \gamma_n (\hat{a}) = 0$, meaning for all $\indices$,
\[ 2\hat{a}_{\lambda} - \frac{2}{n}\sum_{i=1}^n \Delta_i \varphi_{\lambda}(X_i) = 0 \Leftrightarrow \hat{a}_{\lambda} = \frac{1}{n}\sum_{i=1}^n \Delta_i \varphi_{\lambda}(X_i') = \tilde{a}_\lambda. \]
We obtained that $(\tilde{a}_\lambda)_\indices$ are a extremum of $\gamma_n$. We need to prove that it is a minimum of the contrast.
If we look at second order partial derivatives of $\gamma_n$ we have, for $(\tau,\eta)\in\Lambda_m$
\[  \frac{\partial^2 \gamma_n(b_t)}{\partial \tau \partial \eta} = 2 b_{t,\tau} \mathds{1}_{\tau = \eta}. \]
This means that the Hessian matrix of $\gamma_n$ is equal to $2I_{D_m}$ which is a positive semi-definite matrix. It means  that $\gamma_n$ is convex and thus making  $(\tilde{a}_\lambda)_{\indices}$  its unique minimum and we obtain the  announced result.

\subsection{A concentration inequality lemma.}\label{Preuve du lemme}

\begin{lemma}\label{Lemme}
For a sample $(L_i, U_i)_{i\in\{ 1, \dots , n \}}$ i.i.d. with the same law as the random couple $(L, U)$, the function $\sigma : x\in\cercle \mapsto \Proba( x\in [L, U])$ and $\hat{\sigma}$ its empirical estimator. We have for all $y>0$,
\[ \Proba\left(\|\hat{\sigma} - \sigma\|_{\infty} \geq y\right) \leq 6 e^{-2n \frac{y^2}{9}}. \]
\end{lemma}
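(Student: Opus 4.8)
The plan is to reduce the uniform deviation $\|\hat{\sigma}-\sigma\|_\infty$ to deviations of one-dimensional (circular) empirical distribution functions, which are controlled by the Dvoretzky--Kiefer--Wolfowitz (DKW) inequality with Massart's sharp constant, together with a Hoeffding bound and a union bound; the three-term split is precisely what produces the factor $6$ and the exponent $2ny^2/9$ appearing in the statement.

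First I would apply the circular interval identity \eqref{eq:Circulaire}. Since $L\ne U$ almost surely, on an event of probability one we have, for every $i\in\{1,\dots,n\}$ and every $x\in\cercle$,
\[ \mathds{1}_{\{x\in[L_i,U_i]\}} = \mathds{1}_{\{L_i\leq x\}} - \mathds{1}_{\{U_i< x\}} + \mathds{1}_{\{L_i\geq U_i\}}. \]
Averaging over $i$, and taking expectations in the same identity applied to the couple $(L,U)$, gives
\[ \hat{\sigma}(x) = \hat{F}_L(x) - \hat{G}_U(x) + \hat{p}, \qquad \sigma(x) = F_L(x) - G_U(x) + p, \]
where $F_L(x)=\Proba(L\leq x)$, $G_U(x)=\Proba(U<x)$, $p=\Proba(L\geq U)$, and $\hat{F}_L,\hat{G}_U,\hat{p}$ are the corresponding empirical averages over $(L_i,U_i)_{1\leq i\leq n}$. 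Subtracting and using the triangle inequality for $\|\cdot\|_\infty$,
\[ \|\hat{\sigma}-\sigma\|_\infty \leq \|\hat{F}_L-F_L\|_\infty + \|\hat{G}_U-G_U\|_\infty + |\hat{p}-p|, \]
so that by a union bound $\Proba(\|\hat{\sigma}-\sigma\|_\infty\geq y)$ is at most the sum of the probabilities that each of the three terms exceeds $y/3$.

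It then remains to bound each piece. The quantity $\hat{F}_L$ is the empirical distribution function of the i.i.d. circular sample $L_1,\dots,L_n$, so the DKW inequality with Massart's constant gives $\Proba(\|\hat{F}_L-F_L\|_\infty\geq y/3)\leq 2e^{-2ny^2/9}$. The term involving $G_U$ uses a strict inequality, but this is harmless: since $\hat{G}_U(x)-G_U(x)=\lim_{z\uparrow x}(\hat{F}_U(z)-F_U(z))$ one has $\|\hat{G}_U-G_U\|_\infty\leq\|\hat{F}_U-F_U\|_\infty$, and DKW applied to $\hat{F}_U$ gives the same bound $2e^{-2ny^2/9}$. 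Finally $\hat{p}$ is an average of i.i.d. Bernoulli$(p)$ variables, so Hoeffding's inequality yields $\Proba(|\hat{p}-p|\geq y/3)\leq 2e^{-2ny^2/9}$. Summing the three bounds gives $\Proba(\|\hat{\sigma}-\sigma\|_\infty\geq y)\leq 6e^{-2ny^2/9}$, as desired.

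I do not anticipate a genuine obstacle here; the only points needing a line of justification are the almost-sure validity of \eqref{eq:Circulaire} simultaneously over the whole sample (a finite intersection of probability-one events) and the elementary reduction of the strict-inequality empirical process $\hat{G}_U-G_U$ to the ordinary one $\hat{F}_U-F_U$, so that DKW may be invoked verbatim. Everything else is the triangle inequality, a union bound, and two standard concentration inequalities.
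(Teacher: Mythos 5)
Your proof is correct and follows essentially the same route as the paper: the same three-term decomposition via \eqref{eq:Circulaire}, the same triangle inequality and union bound, the DKW inequality with Massart's constant on the two empirical-distribution-function terms, and Hoeffding on the Bernoulli term. The only cosmetic difference is that the paper converts the strict-inequality term $\mathds{1}_{\{U<x\}}$ by substituting $V=2\pi-U$ so as to work with ordinary right-continuous empirical CDFs, whereas you keep $G_U(x)=\Proba(U<x)$ and observe $\|\hat G_U-G_U\|_\infty\le\|\hat F_U-F_U\|_\infty$ via left limits; both maneuvers are valid and give the identical bound.
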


\begin{proof}

By definition of $\sigma$ and by \eqref{eq:Circulaire} and if we write $V=2\pi - U$ we have, for all $x\in\cercle$,
\begin{align}
\nonumber
 \sigma(x) &= \Proba(x\in [L, U]) \\
\nonumber
&= \Inte\Inte \mathds{1}_{\{x\in [l, u]\}} \Proba_{(L,U)}(\dd l , \dd u) \\
\nonumber
&= \Inte \Inte (\mathds{1}_{\{l\leq x\}} - \mathds{1}_{\{u < x\}} + \mathds{1}_{\{l\geq u\}}) \Proba_{(L, U)}(\dd l , \dd u) \\
\nonumber
&= \Inte \mathds{1}_{\{l\leq x\}} \Proba_L(\dd l) - \Inte \mathds{1}_{\{u < x\}} \Proba_U(\dd u) +\Proba( L \geq U)\\
\nonumber
&=\Inte \mathds{1}_{\{l\leq x\}} \Proba_L(\dd l) - \Inte \mathds{1}_{\{v > 2\pi - x\}} \Proba_V(\dd v) +\Proba( L \geq U)\\
\nonumber
&= \Proba (L \leq x) - (1 - \Proba (V \leq 2\pi - x)) + \Proba( L \geq U)\\
\nonumber
&= \Proba(L \leq x) + \Proba(V \leq 2\pi-x) -1 + (1-\Proba(L < U)) \\
\label{eq:Décompo de phi}
&= F_L (x) + F_V (2\pi-x) - \Proba( L < U),
\end{align}
where $F_L$ and $F_V$ are the distribution functions of $L$ and $V$.\\
For $\hat{\sigma}$ the same calculation shows that
\[ \hat{\sigma}(x) = \hat{F}_L (x) + \hat{F}_V (2\pi-x) - \hat{\Proba} ( L< U),\]
where $\hat{F}_L$ and $\hat{F}_V$ are the empirical distribution function of $L$ and $V$, $\hat{\Proba} ( L< U) = \frac{1}{n}\sum_{i=1}^n \mathds{1}_{\{L_i < U_i\}}$.
Thus we have, for all $x\in\cercle$,
\begin{align*} 
|\sigma(x) - \hat{\sigma}(x)| &\leq |F_L (x) - \hat{F}_L(x) | + |F_V ( 2\pi - x) - \hat{F}_V(2\pi - x) | \\
&+ |\Proba( L < U) -  \hat{\Proba} ( L< U)| \\
&\leq \| F_L - \hat{F}_L \|_\infty + \| F_V - \hat{F}_V \|_\infty + |\Proba( L < U) -  \hat{\Proba} ( L< U)|.
\end{align*}
Thus 
\[ \| \sigma - \hat{\sigma} \|_\infty \leq \| F_L - \hat{F}_L \|_\infty + \| F_V - \hat{F}_V \|_\infty + |\Proba( L < U) -  \hat{\Proba} ( L< U)|.\]
So for $y>0$
\begin{align}
\nonumber
 \Proba\left(\|\hat{\sigma} - \sigma\|_{\infty} \geq y\right) &\leq \Proba\left(\|\hat{F}_L - F_L\|_{\infty} \geq \frac {y}{3}\right) +  \Proba\left(\|\hat{F}_V - F_V\|_{\infty} \geq \frac {y}{3}\right) \\
\nonumber
&+  \Proba\left(|\hat{\Proba} ( L< U) - \Proba( L < U)| \geq \frac {y}{3}\right) \\
& \leq 6 e^{-2n \frac{y^2}{9}},
\end{align}
where the last inequality is obtained by using Dvoretzky–Kiefer–Wolfowitz's inequality on the first two terms and Hoeffding's inequality on the third one.

\end{proof}

\subsection{Proof of Theorem~\ref{MISE estimateur seuil}}\label{Preuve MISE estimateur seuil}

When $\tilde{f}$ is positive we have $\tilde{f} = \hat{f}$. However when $\tilde{f}$ is non positive then $\tilde{f} \leq f$ and $\hat{f}=0$, meaning
\[ |\tilde{f} - f| = f -\tilde{f} \geq |f-0| = |f-\hat{f}|.\]
Put together we have
\[  |\tilde{f} - f| =  |\tilde{f} - f|\mathds{1}_{\tilde{f}>0} +  |\tilde{f} - f|\mathds{1}_{\tilde{f}\leq0} \geq |\hat{f}-f|\mathds{1}_{\tilde{f}>0} +|f-\hat{f}|\mathds{1}_{\tilde{f}<0} = |\hat{f}-f|.\]
We recall $\tilde{f} = \frac{\hat{\psi}_m}{\hat{\sigma} \wedge n^{-1/2}}$  and $f= \frac{\psi}{\sigma}$. Thus we have the following inequality:
\[ \|\tilde{f} - f\|_2^2 \leq 2\underbrace{\left\| \frac{\hat{\psi}_m - \psi}{\hat{\sigma}\lor n^{-1/2}}\right\|_2^2}_{:=A_1} + 2\underbrace{\left\| \frac{\sigma - \hat{\sigma}\lor n^{-1/2}}{\hat{\sigma}\lor n^{-1/2}} f\right\|_2^2}_{:=A_2}. \]
The goal now is to find upper bounds for $A_1$ et $A_2$ using a well found partition of $\Omega$. For this we will use the result of Lemma~\ref{Lemme}.
Define the set $E =  \left\{ \omega\in\Omega,\forall x\in \cercle, \hat{\sigma}(x) \geq \frac{\sigma_0}{2}\right\}$. Notice that \[E^c= \left\{\omega\in\Omega,\exists x\in[0,2\pi[, \hat{\sigma}(x) < \frac{\sigma_0}{2} \right\} \subset \left\{\omega\in\Omega, \|\hat{\sigma} - \sigma\|_{\infty} \geq \frac{\sigma_0}{2} \right\}.\] So thanks to Lemma~\ref{Lemme}, with $y=\frac{\sigma_0}{2}$, we have 
\[ \Proba(E^c) \leq \Proba\left(\left\{\omega\in\Omega, \|\hat{\sigma} - \sigma\|_{\infty} \geq \frac{\sigma_0}{2} \right\} \right) \leq 6e^{-2n \frac{\sigma_0^2}{36}}.\]
We will study how $\Esp(A_1)$ et $\Esp(A_2)$ behave on the partition $\Omega = E\sqcup E^c$. On the set $E^c$ we will use the inequality $\hat{\sigma}\lor n^{-1/2} \geq n^{-1/2}$ and the set $E$ we have the inequality $\hat{\sigma}\lor n^{-1/2} = \hat{\sigma} \geq \frac{\sigma_0}{2}$.\\
Some results are necessary to conclude. We state them in the following propositions.
\begin{proposition}\label{MISE de psi chapeau}
Suppose $\psi\in\mathbb{L}^2(\cercle).$ An upper bound for the MISE of the estimator $\hat{\psi}_m$ is the following:
\[ \Esp\left(\|\psi - \hat{\psi}_m \|_2^2\right)\leq \| \psi - \psi_m\|_2^2 + \frac{\Phi_0^2 D_m \Esp(\Delta)}{n}.\]

\end{proposition}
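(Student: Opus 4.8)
The plan is to run the standard bias--variance decomposition for a linear projection estimator; the only point that needs a little care is the bookkeeping around replacing $\varphi_\lambda(X_i')$ by $\varphi_\lambda(X_i)$ on the event $\{\Delta_i = 1\}$.

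First I would record that $\hat\psi_m$ is an unbiased estimator of the orthogonal projection $\psi_m$. By Lemma~\ref{lemme_sur_les_coefs} the coefficients of $\hat\psi_m$ in the orthonormal basis $(\philambda)_{\indices}$ are $\hat a_\lambda = \frac1n\sum_{i=1}^n \Delta_i\varphi_\lambda(X_i')$, and by \eqref{eq:def_de_a} each has expectation $\Esp(\Delta\varphi_\lambda(X')) = a_\lambda = <\psi,\varphi_\lambda>$, so $\Esp(\hat\psi_m) = \psi_m$. Since $\psi - \psi_m \perp S_m$ while $\psi_m - \hat\psi_m \in S_m$, the Pythagorean identity gives
\[ \|\psi - \hat\psi_m\|_2^2 = \|\psi - \psi_m\|_2^2 + \|\psi_m - \hat\psi_m\|_2^2 , \]
and taking expectations reduces the problem to bounding $\Esp\|\psi_m - \hat\psi_m\|_2^2 = \sum_{\indices}\Var(\hat a_\lambda)$, where the last equality is Parseval applied to $\psi_m - \hat\psi_m \in S_m$ together with the unbiasedness just established.

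Next, since the triplets $(X_i',L_i,U_i)$ are i.i.d., I would write $\Var(\hat a_\lambda) = \frac1n\Var\bigl(\Delta_1\varphi_\lambda(X_1')\bigr) \le \frac1n\Esp\bigl(\Delta_1^2\varphi_\lambda(X_1')^2\bigr)$. Using $\Delta_1^2 = \Delta_1 \in \{0,1\}$ and the fact that $\Delta_1\varphi_\lambda(X_1')^2 = \Delta_1\varphi_\lambda(X_1)^2$ — on $\{\Delta_1=0\}$ both sides vanish, on $\{\Delta_1=1\}$ one has $X_1' = X_1$ — I would then sum over $\lambda$ and pull the finite sum inside the expectation:
\[ \sum_{\indices}\Esp\bigl(\Delta_1\varphi_\lambda(X_1)^2\bigr) = \Esp\Bigl(\Delta_1\sum_{\indices}\varphi_\lambda(X_1)^2\Bigr) \le \Bigl\|\sum_{\indices}\varphi_\lambda^2\Bigr\|_\infty \Esp(\Delta_1) \le \Phi_0^2 D_m \Esp(\Delta_1), \]
the last inequality being exactly the linear sieve bound \eqref{eq:Linear_Sieves2}. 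This gives $\Esp\|\psi_m - \hat\psi_m\|_2^2 \le \Phi_0^2 D_m\Esp(\Delta)/n$ and hence the stated inequality.

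I do not expect a genuine obstacle here: the argument is routine once the orthogonality structure is used. The one spot to be attentive about is the substitution $\varphi_\lambda(X_i') \rightsquigarrow \varphi_\lambda(X_i)$, which is legitimate precisely because this quantity always occurs multiplied by $\Delta_i$ — the same observation already exploited in \eqref{eq:Esperance_et_psi} — so the arbitrary value $-\pi$ assigned to censored observations plays no role.
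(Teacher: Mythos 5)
Your proof is correct and follows essentially the same route as the paper: Pythagorean decomposition into bias plus $\|\psi_m - \hat\psi_m\|_2^2$, Parseval and i.i.d.\ structure reducing the latter to a sum of per-coefficient variances, the bound $\Var(\Delta\varphi_\lambda(X')) \le \Esp(\Delta\varphi_\lambda^2(X'))$, and then the sieve inequality \eqref{eq:Linear_Sieves2}. The only difference is presentational: you spell out explicitly why the arbitrary value $-\pi$ of $X_i'$ on $\{\Delta_i=0\}$ is harmless, a point the paper leaves implicit.
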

The proof can be found in Section~\ref{Proof of première prop}.
\begin{proposition}\label{MISE de phi chapeau}
Suppose $f\in\mathbb{L}^2(\cercle)$. An upper bound for the weighted MISE of the estimator $\hat{\sigma}$ is the following:
\[ \Esp\left(\|(\sigma - \hat{\sigma})f\|_2^2\right)\leq \frac{\|f\|_2^2}{4n}.\]
\end{proposition}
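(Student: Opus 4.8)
\textbf{Proof proposal for Proposition~\ref{MISE de phi chapeau}.}
The plan is to exploit the fact that $\hat{\sigma}$ is an unbiased estimator of $\sigma$ built from i.i.d. Bernoulli variables, so that its pointwise mean squared error is just a variance that we control uniformly by $\frac{1}{4n}$, and then integrate against $f^2$.

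First I would fix $x\in\cercle$ and observe that, by definition \eqref{eq:def_har_sigma} and \eqref{eq:Phi}, the variables $\mathds{1}_{\{x\in[L_i,U_i]\}}$, $1\leq i\leq n$, are i.i.d. with common law $\Ber(\sigma(x))$, so $\hat{\sigma}(x)=\frac1n\sum_{i=1}^n\mathds{1}_{\{x\in[L_i,U_i]\}}$ satisfies $\Esp(\hat{\sigma}(x))=\sigma(x)$ and
\[ \Esp\left((\sigma(x)-\hat{\sigma}(x))^2\right)=\Var(\hat{\sigma}(x))=\frac{\sigma(x)(1-\sigma(x))}{n}\leq\frac{1}{4n}, \]
using $t(1-t)\leq\frac14$ for $t\in[0,1]$.

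Next I would write $\|(\sigma-\hat{\sigma})f\|_2^2=\Inte (\sigma(x)-\hat{\sigma}(x))^2 f(x)^2\dd x$ and apply the Tonelli theorem (the integrand is nonnegative and measurable in $(\omega,x)$) to exchange $\Esp$ and $\Inte$, which yields
\[ \Esp\left(\|(\sigma-\hat{\sigma})f\|_2^2\right)=\Inte \Esp\left((\sigma(x)-\hat{\sigma}(x))^2\right) f(x)^2\dd x\leq\frac{1}{4n}\Inte f(x)^2\dd x=\frac{\|f\|_2^2}{4n}, \]
which is the claim; the hypothesis $f\in\mathbb{L}^2(\cercle)$ is exactly what makes the right-hand side finite. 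There is no real obstacle here: the only point requiring a line of justification is the use of Tonelli to swap expectation and the circle integral, and this is immediate from nonnegativity, so the argument is essentially the three displays above.
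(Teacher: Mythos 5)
Your proof is correct and follows exactly the same route as the paper's: both recognize $\hat{\sigma}(x)$ as the empirical mean of i.i.d.\ $\Ber(\sigma(x))$ variables, bound the pointwise variance by $\frac{1}{4n}$, and swap expectation with the circle integral (the paper does this tacitly, you invoke Tonelli explicitly). No gap, and no substantive difference in method.
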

The proof can be found in Section~\ref{Preuve_du_taux_de_hat_phi}.
\begin{proposition}\label{Troisième majoration}
Recall $E^c= \left\{\omega\in\Omega,\exists x\in[0,2\pi[, \hat{\sigma}(x) < \frac{\sigma_0}{2} \right\}$. Suppose $\psi\in\mathbb{L}^2(\cercle)$ and Assumption (A) . An upper bound for the MISE of the estimator $\hat{\psi}_m$ on the set $E^c$ is the following:
\[\Esp\left(\| \hat{\psi}_m - \psi\|_2^2 \mathds{1}_{E^c}\right)\leq (n \Phi_0^2 + \|\psi\|_2^2)12 e^{-2n \frac{\sigma_0^2}{36}}.\]
\end{proposition}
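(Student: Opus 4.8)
The plan is to establish a crude but purely deterministic bound on $\|\hat{\psi}_m-\psi\|_2^2$ that grows at most linearly in $n$, and then to multiply it by $\Proba(E^c)$, which Lemma~\ref{Lemme} shows to be exponentially small. First I would use $(a+b)^2\le 2a^2+2b^2$ to write
\[
\|\hat{\psi}_m-\psi\|_2^2\;\le\;2\|\hat{\psi}_m\|_2^2+2\|\psi\|_2^2 ,
\]
where $\|\psi\|_2^2<+\infty$ is a fixed constant by hypothesis. The only work is then bounding $\|\hat{\psi}_m\|_2^2$ deterministically.

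For that, I would invoke Lemma~\ref{lemme_sur_les_coefs} to express $\hat{\psi}_m=\sum_{\indices}\hat a_\lambda\philambda$ with $\hat a_\lambda=\frac1n\sum_{i=1}^n\Delta_i\philambda(X'_i)$, and apply Parseval in the orthonormal basis $(\philambda)_{\indices}$ of $S_m$:
\[
\|\hat{\psi}_m\|_2^2=\sum_{\indices}\hat a_\lambda^2=\sum_{\indices}\Big(\tfrac1n\sum_{i=1}^n\Delta_i\philambda(X'_i)\Big)^2 .
\]
By the Cauchy--Schwarz (Jensen) inequality applied to the empirical average and using $\Delta_i^2=\Delta_i$, each term is at most $\frac1n\sum_{i=1}^n\Delta_i\philambda(X'_i)^2$; since the summands with $\Delta_i=0$ vanish, only indices with $\Delta_i=1$ contribute, and for those $X'_i=X_i\in\cercle$, so the evaluation of $\philambda$ always takes place at a genuine point of the circle. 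Summing over $\lambda$ and using \eqref{eq:Linear_Sieves2} together with $\frac1n\sum_i\Delta_i\le 1$ and $D_m\le n$ gives
\[
\|\hat{\psi}_m\|_2^2\le \frac1n\sum_{i=1}^n\Delta_i\,\Big\|\sum_{\indices}\philambda^2\Big\|_\infty\le \Phi_0^2 D_m\cdot\frac1n\sum_{i=1}^n\Delta_i\le \Phi_0^2 D_m\le \Phi_0^2 n .
\]
Hence $\|\hat{\psi}_m-\psi\|_2^2\le 2\bigl(\Phi_0^2 n+\|\psi\|_2^2\bigr)$ on all of $\Omega$.

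Finally, because this bound is deterministic,
\[
\Esp\!\left(\|\hat{\psi}_m-\psi\|_2^2\,\mathds{1}_{E^c}\right)\le 2\bigl(\Phi_0^2 n+\|\psi\|_2^2\bigr)\,\Proba(E^c).
\]
Using the inclusion $E^c\subset\{\|\hat{\sigma}-\sigma\|_\infty\ge\sigma_0/2\}$ already recorded in the proof of Theorem~\ref{MISE estimateur seuil} and Lemma~\ref{Lemme} with $y=\sigma_0/2$ (so that $2n y^2/9=2n\sigma_0^2/36$), we get $\Proba(E^c)\le 6e^{-2n\sigma_0^2/36}$, and multiplying the two estimates yields $(n\Phi_0^2+\|\psi\|_2^2)\,12\,e^{-2n\sigma_0^2/36}$, which is the claim. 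The only genuinely non-routine step is the deterministic control of $\|\hat{\psi}_m\|_2^2$: one has to be careful that \eqref{eq:Linear_Sieves2} is applied at points of $\cercle$ (legitimate since the terms with $\Delta_i=0$ drop out) and to remember that $D_m\le n$ is built into the definition of the linear sieves; everything else is a one-line combination with Lemma~\ref{Lemme}.
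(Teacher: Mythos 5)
Your proof is correct and follows essentially the same route as the paper: the same decomposition $\|\hat{\psi}_m-\psi\|_2^2\le 2\|\hat{\psi}_m\|_2^2+2\|\psi\|_2^2$, the same deterministic bound $\|\hat{\psi}_m\|_2^2\le\Phi_0^2 D_m\le\Phi_0^2 n$ via \eqref{eq:Linear_Sieves2} and $D_m\le n$, and the same appeal to Lemma~\ref{Lemme} with $y=\sigma_0/2$ to control $\Proba(E^c)$. Your explicit remark that the summands with $\Delta_i=0$ drop out (so that $\philambda$ is only ever evaluated at genuine points $X_i\in\cercle$) is a small clarification of a step the paper leaves implicit, but the argument is the same.
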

The proof can be found in Section~\ref{Preuve de la troisième majoration}.\\

For $A_1$ we have
\begin{align*}
\Esp(A_1\mathds{1}_{E})=\Esp\left(\left\| \frac{\hat{\psi}_m - \psi}{\hat{\sigma}\lor n^{-1/2}}\right\|_2^2\mathds{1}_{E} \right) &\leq \frac{4}{\sigma_0^2}
\Esp\left(\|\hat{\psi}_m - \psi\|_2^2\right) \\
&\leq \frac{4}{\sigma_0}\left(\| \psi - \psi_m\|_2^2 + \frac{\Phi_0^2 D_m \Esp(\Delta)}{n} \right),
\end{align*}
using Proposition~\ref{MISE de psi chapeau}, and
\begin{align*} 
\Esp(A_1\mathds{1}_{E^c})=\Esp\left(\left\| \frac{\hat{\psi}_m - \psi}{\hat{\sigma}\lor n^{-1/2}}\right\|_2^2\mathds{1}_{E^c} \right) &\leq n \Esp\left(\| \hat{\psi}_m - \psi\|_2^2 \mathds{1}_{E^c}\right) \\
&\leq 2n\left(n\Phi_0^2 + \|\psi\|_2^2\right)\Proba(E^c) \\
&\leq \left(n\Phi_0^2 + \|\psi\|_2^2\right)12ne^{-2n \frac{\sigma_0^2}{36}},
\end{align*}
using Proposition~\ref{Troisième majoration} and Lemma~\ref{Lemme}.\\


For $A_2$ we have
\[\Esp(A_2\mathds{1}_{E})=\Esp\left(\left\| \frac{(\sigma - \hat{\sigma}\lor n^{-1/2})f}{\hat{\sigma}\lor n^{-1/2}}\right\|_2^2 \mathds{1}_{E} \right) \leq \frac{4}{\sigma_0^2}\Esp\left(\|(\sigma - \hat{\sigma}) f\|_2^2\right) \leq \frac{\|f\|_2^2}{\sigma_0^2 n}, \]
using Proposition~\ref{MISE de phi chapeau}, and
\begin{align*} 
\Esp(A_2\mathds{1}_{E^c})=\Esp\left(\left\| \frac{(\sigma - \hat{\sigma}\lor n^{-1/2})f}{\hat{\sigma}\lor n^{-1/2}}\right\|_2^2\mathds{1}_{E^c} \right) &\leq n \Esp\left(\|\underbrace{(\sigma - \hat{\sigma}\lor n^{-1/2})}_{\leq 2}f\|_2^2 \mathds{1}_{E^c}\right)\\
&\leq 4n\|f\|_2^2\Proba(E^c) \\
&\leq \left(2\|f\|_2^2\right) 12ne^{-2n \frac{\sigma_0^2}{36}},
\end{align*}
using Lemma~\ref{Lemme}.\\
With the different inequalities obtained we have
\begin{align*}
\Esp\left(\|\hat{f} - f\|_2^2\right) &\leq \frac{1}{\sigma_0^2}\left(8\| \psi - \psi_m\|_2^2 + \frac{8\Phi_0^2 D_m \Esp(\Delta)}{n} + \frac{2\|f\|_2^2}{ n}\right)  \\
&+ \left(n\Phi_0^2 + \|\psi\|_2^2 + 2\|f\|_2^2\right)24ne^{-2n \frac{\sigma_0^2}{36}}\\
&\leq  \frac{8}{\sigma_0^2}\left(\| \psi - \psi_m\|_2^2 + \frac{\Phi_0^2 D_m \Esp(\Delta)}{n}\right) + \frac{C}{n},
\end{align*}
where $C$ is a constant that depends on $\Phi_0, \sigma_0$ and $\|f\|_2$. We obtained the inequality announced in the theorem.

\subsection{Proof of Proposition~\ref{MISE de psi chapeau}}\label{Proof of première prop}

Using Pythagorean theorem, \eqref{eq:Def_hat_psi_m} and \eqref{eq:Def_psi_m}, we have
\begin{align*}
    \|\psi - \hat{\psi}_m \|_2^2 &= \| \psi - \psi_m\|_2^2 + \|\psi_m - \hat{\psi}_m\|_2^2 \\
    &= \|\psi - \psi_m\|_2^2 + \sum_{\indices} (a_{\lambda} - \hat{a}_{\lambda})^2\\
    &= \| \psi - \psi_m\|_2^2 + \sum_{\indices} \left(\frac{1}{n}\sum_{i=1}^n \Delta_i\philambda(X_i) - \underbrace{\Inte\philambda(x)\psi(x) \dd x}_{=\Esp\left(\frac{1}{n}\sum_{i=1}^n \Delta_i\philambda(X_i')\right)} \right)^2.
\end{align*}
Using \eqref{eq:Linear_Sieves2}, an upper bound of $\hat{\psi}_m$ MISE is 
\begin{align*}
    \Esp\left(\|\psi - \hat{\psi}_m \|_2^2\right) &=\| \psi - \psi_m\|_2^2 + \sum_{\indices} \Var\left(\frac{1}{n}\sum_{i=1}^n \Delta_i\philambda(X_i')\right)\\
    &=\| \psi - \psi_m\|_2^2 + \frac{1}{n}\sum_{\indices}\Var(\Delta \philambda(X'))\\
	&\leq \| \psi - \psi_m\|_2^2 + \frac{1}{n}\sum_{\indices} \Esp\left(\Delta^2 \philambda^2(X')\right)\\
	&\leq \| \psi - \psi_m\|_2^2 + \frac{1}{n} \Esp\left(\|\sum_{\indices}\philambda^2 \|_{\infty}\Delta\right)\\
	&\leq \| \psi - \psi_m\|_2^2 + \frac{\Phi_0^2 D_m \Esp(\Delta)}{n}.
\end{align*}
We obtain the announced result.

\subsection{Proof of Proposition~\ref{MISE de phi chapeau}.}\label{Preuve_du_taux_de_hat_phi}

We recall from \eqref{eq:def_har_sigma} that \\$\hat{\sigma}(x) = \frac{1}{n}\sum_{i=1}^n \mathds{1}_{\{x\in[L_i, U_i]\}}$ and that it is the empirical estimator of $\sigma$, such that $\Esp(\hat{\sigma}) = \sigma$. Then we have
\begin{align*}
\Esp\left(\|(\hat{\sigma} - \sigma)f\|_2^2\right) &= \Esp\left(\Inte (\hat{\sigma}(x) -\sigma(x))^2 f^2(x)\dd x\right) \\
&= \Inte \Var(\hat{\sigma}(x))f^2(x) \dd x = \Inte \frac{1}{n} \Var\left(\mathds{1}_{\{x\in[L, U]\}}\right)f^2(x)\dd x \\
&= \Inte \frac{1}{n} \left( \Esp\left(\mathds{1}_{\{x\in[L, U]\}}^2\right) - \Esp\left(\mathds{1}_{\{x\in[L, U]\}}\right)^2 \right)  f^2(x)\dd x \\
&= \Inte \frac{1}{n} \left(\sigma(x) - \sigma^2(x)\right)f^2(x) \dd x  \leq \Inte \frac{f^2(x)}{4n} \dd x = \frac{\|f\|_2^2}{4n}.
\end{align*}
We obtain the announced inequality.

\subsection{Proof of Proposition~\ref{Troisième majoration}}\label{Preuve de la troisième majoration}

First we have the following decomposition:
\begin{align}
\nonumber\|\hat{\psi}_m - \psi \|_2^2 &\leq 2 (\|\hat{\psi}_m\|_2^2 + \|\psi\|_2^2).
\end{align}
To find an upper bound of the $\mathbb{L}^2$ norm of $\hat{\psi}_m$ we use the property of the sieves stated in \eqref{eq:Linear_Sieves2}.
\begin{align} \|\hat{\psi}_m\|_2^2 &\nonumber=\sum_{\indices} \hat{a}_{\lambda}^2 \leq  \sum_{\indices} \left( \frac{1}{n}\sum_{i=1}^n \Delta_i \philambda(X_i')\right)^2 \leq \sum_{\indices} \frac{n}{n^2}\sum_{i=1}^n\philambda^2(X_i)\\
& \label{eq:Ineg9.9} \leq \frac{1}{n}\sum_{i=1}^n\sum_{\indices}\philambda^2(X_i)  \leq  \frac{1}{n}\sum_{i=1}^n \|\sum_{\indices}\philambda^2\|_{\infty} \leq \|\sum_{\indices}\philambda^2\|_{\infty} \leq \Phi_0^2D_m \leq n \Phi_0^2 .
\end{align}
So we have
\[\Esp\left(\| \hat{\psi}_m - \psi\|_2^2 \mathds{1}_{E^c}\right)\leq 2\Esp\left( (n \Phi_0^2 + \|\psi\|_2^2)\mathds{1}_{E^c}\right) \leq 2\left(n \Phi_0^2 + \|\psi\|_2^2\right)\Proba(E^c).\]
What remains is to find an upper bound of $\Proba(E^c)$.\\
Using its definition, we have $\Proba(E^c) \leq \Proba\left(\|\hat{\sigma} - \sigma\|_{\infty} \geq \frac{\sigma_0}{2}\right)$. Using Lemma~\ref{Lemme}, with $y=\frac{\sigma_0}{2}$, we obtain the following upper bound:
\[\Esp\left(\|\hat{\psi}_m - \psi\|_2^2\mathds{1}_{E^c}\right)\leq \left(n \Phi_0^2 + \|\psi\|_2^2\right)12 e^{-2n \frac{\sigma_0^2}{36}}.\]
We find the inequality of the proposition.

\subsection{Proof of Lemma~\ref{Lemme_Sobolev}}\label{Preuve_du_taux_de_hat_psi}

For $m \in \mathcal{M}_n$, the trigonometric space $S_m$ is generated by $\{\varphi_0 := \frac{1}{\sqrt{2\pi}}, \varphi_j := \frac{1}{\sqrt{\pi}}\cos(j\cdot),\varphi_{j+1}:=\frac{1}{\sqrt{\pi}}\sin(j\cdot) | \text{ for } j \in \{ 1, \dots , m \}\}$. Its dimension is $D_m = 2m+1$ and $\Phi_0 = \frac{1}{\sqrt{2\pi}}$. For all $j\in \mathbb{N}$ we recall $a_j= \Inte \psi(x)\varphi_j(x) \dd x$. With this orthonormal basis of $\mathbb{L}^2(\cercle)$ and $S_m$ we can write those equalities in the $\mathbb{L}^2$ sense,
\[\psi = \sum_{j=0}^{+\infty} a_j \varphi_j, \hspace{40pt} \psi_m = \sum_{j=0}^{D_m}  a_j \varphi_j .\]
Using those definitions we have
\[\| \psi - \psi_m\|_2^2\leq \sum_{j= D_m + 1}^{+\infty}a_j^2 . \]
Now we use the fact that $\psi$ is an element of the Sobolev class $W(\beta_{\psi},L)$. This means that
\[ \sum_{j=0}^{+\infty} \left(\alpha_j^2 \left|<\psi,\varphi_j>\right|^2\right) = \sum_{j=0}^{+\infty} \alpha_j^2 a_j^2  \hspace{3pt}< \frac{L^2}{\pi^{2\beta}}, \]
where the $\alpha_j$ are defined as the following:
\begin{equation}\nonumber
\alpha_j=\left\{\begin{array}{ll}
        j^{\beta_{\psi}}, \hspace{25pt}\text{for even }j,\\
        (j-1)^{\beta_{\psi}}, \hspace{0pt}\text{for odd }j.
    \end{array}
\right.
\end{equation}
Because the $\alpha_j$ are non-decreasing we also have
\[ \sum_{j= D_m + 1}^{+\infty}a_j^2 \leq \frac{1}{\alpha_{D_m +1}^2}\sum_{j= 0}^{+\infty}\alpha_j^2 a_j^2 \leq \frac{L^2}{\pi^{2\beta_{\psi}}\alpha_{D_m +1}^2}  = \frac{L^2}{\pi^{2 \beta_{\psi}}} \left(D_m + 1\right)^{-2\beta_{\psi}} \leq \frac{L^2}{\pi^{2\beta_{\psi}}}n^{\frac{-2\beta_{\psi}}{2\beta_{\psi} +1}}.\]
which is exactly the inequality we wanted to prove.

\subsection{Proof of Corollary~\ref{Coro estimateur seuil}}\label{Proof Coro estimateur seuil}
We remind that Theorem~\ref{MISE estimateur seuil} gives us
\begin{equation}
\nonumber
\Esp\left(\|\hat{f} - f\|_2^2\right) \leq \frac{8}{\sigma_0^2}\left(\| \psi - \psi_m\|_2^2 + \frac{\Phi_0^2 D_m \Esp(\Delta)}{n}\right)  + \frac{C}{n}.
\end{equation}
We know from Lemma~\ref{Lemme_Sobolev} that
\[ \|\psi - \psi_m\|_2^2 \leq \frac{L^2}{\pi^{2\beta_\psi}}n^{\frac{-2\beta_\psi}{2\beta_\psi + 1}}.\]
Since $D_m = \lfloor n^{\frac{1}{2\beta_{\psi}+1}}\rfloor$ and $\Phi_0=\frac{1}{\sqrt{2\pi}}$ in the case of trigonometric spaces we have
\[ \frac{\Phi_0^2 D_m \Esp(\Delta)}{n} \leq \frac{1}{2\pi}n^{\frac{1}{2\beta_{\psi}+1} - 1} \leq \frac{1}{2\pi}n^{\frac{-2\beta_\psi}{2\beta_\psi + 1}}.\]
Combining those two results we obtain
\[ \Esp\left(\|\hat{f} - f\|_2^2\right) = \mathcal{O}\left(n^{\frac{-2\beta_{\psi}}{2\beta_{\psi} +1}}\right). \]
We obtain the rate of convergence mentioned in the corollary.

\subsection{A useful Talagrand's inequality}

\begin{lemma}\label{Talagrand}
Set $Z_1,\dots, Z_n$ independent random variables and \\$\nu_n(l)= \frac{1}{n} \sum_{i=1}^n \left(l(Z_i) - \Esp(l(Z_i)) \right)$ for $l$ an element of a countable class $\mathcal{L}$ of uniformly bounded measurable functions. Then
\[ \Proba\left(\sup_{l\in\mathcal{L}} \left|\nu_n(l)\right| \geq H + \xi\right) \leq \exp\left(-\frac{n\xi^2}{2(v+4HM_1) + 6M_1\xi} \right),\]
for \hspace{0.2cm}$\sup_{l\in\mathcal{L}} \|l\|_{\infty} \leq M_1 , \hspace{0.2cm}\Esp\left(\sup_{l\in\mathcal{L}} \left|\nu_n(l)\right|\right) \leq H , \hspace{0.2cm}\sup_{l\in\mathcal{L}}\frac{1}{n} \sum_{i=1}^n\Var(l(Z_i)) \leq v$.\\
Moreover, for $\epsilon >0$ we have
\[ \Esp\left( \sup_{l\in\mathcal{L}} \left|\nu_n(l)\right|^2 - 2(1+2\epsilon)H^2\right)_+ \leq C\left(\frac{v}{n}e^{-K_1 \epsilon \frac{nH^2}{v}} + \frac{M_1^2}{n^2 C(\epsilon)^2}e^{-K_2C(\epsilon)\sqrt{\epsilon}\frac{nH}{M_1}} \right),\]
where $C(\epsilon) = \left(\sqrt{1+\epsilon}-1\right)\wedge 1$ (writing $a \wedge b := \min(a,b)$), $K_1= 1/6$ and $K_2 = 1/(21\sqrt{2})$.
\end{lemma}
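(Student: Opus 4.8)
This is the classical Talagrand inequality for the supremum of a centered empirical process indexed by a countable uniformly bounded class, and the plan is to obtain it by invoking a known version with explicit constants rather than re-deriving it. Applying Bousquet's form (equivalently, the Klein--Rio version) of the bound to the process $n\nu_n$ and rescaling gives a tail of exactly the announced shape once one replaces the sharp weak-variance parameter by the cruder uniform bound $v$ and absorbs the $HM_1$ cross term into the denominator as written. Countability of $\mathcal L$ removes all measurability issues, and $\sup_{l\in\mathcal L}\|l\|_\infty\le M_1$ supplies the bounded-difference input; no new idea is needed at this stage.

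\textbf{From the tail bound to the second moment bound.} Set $W:=\sup_{l\in\mathcal L}|\nu_n(l)|$ and $a:=2(1+2\epsilon)H^2$. Then
\[
\Esp\bigl(W^2-a\bigr)_+ = \int_0^{+\infty}\Proba\bigl(W>\sqrt{a+t}\,\bigr)\,\dd t .
\]
Since $a\ge 2H^2>H^2$, we have $\xi(t):=\sqrt{a+t}-H>0$ for every $t\ge 0$, so the first inequality applies with $\xi=\xi(t)$ and yields $\Proba(W>\sqrt{a+t})\le \exp\bigl(-n\xi(t)^2/(2(v+4HM_1)+6M_1\xi(t))\bigr)$. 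The elementary step is to split the denominator into its ``variance regime'' and its ``bounded regime'': using $\tfrac1{A+B}\ge\tfrac12\bigl(\tfrac1A\wedge\tfrac1B\bigr)$,
\[
\exp\!\Bigl(-\tfrac{n\xi^2}{2(v+4HM_1)+6M_1\xi}\Bigr)\ \le\ \exp\!\Bigl(-\tfrac{n\xi^2}{4(v+4HM_1)}\Bigr)+\exp\!\Bigl(-\tfrac{n\xi}{12M_1}\Bigr),
\]
and after absorbing $4HM_1$ against $v$ in the first exponential this reduces the problem to integrating a genuinely Gaussian tail $e^{-cn\xi^2/v}$ plus a genuinely exponential tail $e^{-cn\xi/M_1}$.

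\textbf{The two one-dimensional integrals.} For the Gaussian part one uses the lower bound $\xi(t)^2=(\sqrt{a+t}-H)^2\ge c\,(\epsilon H^2+t)$ (obtained from $2H\sqrt{a+t}\le\theta H^2+(a+t)/\theta$ with a suitable $\theta$; here the gap $2(1+2\epsilon)-1$ is what makes $\epsilon H^2$, not just $H^2$, survive), so that $\int_0^\infty e^{-cn(\epsilon H^2+t)/v}\,\dd t=\tfrac{v}{cn}e^{-cn\epsilon H^2/v}$, giving the first term with $K_1=1/6$. For the exponential part one uses $\xi(t)\ge\sqrt{a+t}-H\ge c\,C(\epsilon)\bigl(\sqrt\epsilon\,H+\sqrt t\bigr)$ with $C(\epsilon)=(\sqrt{1+\epsilon}-1)\wedge1$ (the minimum with $1$ is what keeps the bound valid for large $\epsilon$), and then $\int_0^\infty e^{-cnC(\epsilon)(\sqrt\epsilon H+\sqrt t)/M_1}\,\dd t$ is a Gamma-type integral equal to a constant times $\tfrac{M_1^2}{n^2C(\epsilon)^2}e^{-cnC(\epsilon)\sqrt\epsilon H/M_1}$, giving the second term with $K_2=1/(21\sqrt2)$. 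The only delicate point — and the main obstacle — is propagating the numerical constants cleanly through the two regime splits and the $\sqrt{a+t}-H$ lower bounds so that they come out exactly as $K_1=1/6$ and $K_2=1/(21\sqrt2)$; everything else is bookkeeping. Since all of this is standard, the most economical exposition is to cite Klein \& Rio for the first bound and Birg\'e \& Massart for the integration computation yielding the second.
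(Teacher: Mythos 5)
There is a genuine gap in your plan, and it is not a matter of bookkeeping constants as you suggest at the end. After splitting the denominator $2(v+4HM_1)+6M_1\xi$ into two pieces via $\tfrac{1}{A+B}\ge\tfrac12(\tfrac1A\wedge\tfrac1B)$, you are left with $\exp\bigl(-n\xi^2/(4(v+4HM_1))\bigr)$ and you claim to pass to $e^{-cn\xi^2/v}$ by ``absorbing $4HM_1$ against $v$.'' That step requires an inequality of the form $v+4HM_1\le C\,v$, i.e.\ $HM_1\lesssim v$, which is not implied by the hypotheses of the lemma: $M_1$, $H$, $v$ are independent bounds and in the intended application ($M_1\sim\Phi_0\sqrt{D}$, $v\sim\|\psi\|_\infty$, $H^2\sim\Phi_0^2D\,\Esp(\Delta)/n$) the ratio $HM_1/v$ is of order $D/\sqrt{n}$ and need not be bounded. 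So your reduction to a ``genuinely Gaussian tail $e^{-cn\xi^2/v}$'' is not valid, and the resulting inequality is false in the regime $HM_1\gg v$.

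The paper avoids this precisely by \emph{not} trying to absorb the $8HM_1$ term into $v$. It first writes $\xi=\lambda+\eta H$, expands the numerator as $\xi^2\ge\lambda^2+2\eta\lambda H$, and then uses a three-way split $\frac{a+b}{c+d+e}\ge\frac13(\frac{a}{c}\wedge\frac{b}{d}\wedge\frac{a}{e})$ in which the cross term $2\eta\lambda H$ is paired against $8HM_1+6M_1\eta H$; here the factor $H$ cancels, yielding $\frac{\eta\lambda}{M_1(4+3\eta)}$ with no $H$ in the denominator. This pairing is the key idea, not an incidental constant-chasing step, and it is exactly what your two-way split cannot reproduce. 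The rest of your outline — citing Klein--Rio for the first bound, $\Esp(X)_+=\int_0^\infty\Proba(X\ge t)\,\dd t$, the lower bound $\xi(t)\ge\sqrt{\epsilon H^2+t/2}$ with $\eta=C(\epsilon)$, and the two one-dimensional integrals — does match the paper's structure once the correct splitting is in place, so a repair would be to adopt the $\xi=\lambda+\eta H$ decomposition and the three-way inequality before integrating.
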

The first inequality is a result from \citelink{Talagrand}{Talagrand ('96)} and \citelink{Klein-Rio}{Klein, Rio (2005)}.\\
We will show how we obtain the second inequality.
For this we use an idea from \citelink{Birge-Massart-1998}{Birgé, Massart ('98)}. Setting $\xi = \lambda + \eta H$ and $\frac{a+b}{c+d+e} \geq \frac{1}{3}\left(\frac{a}{c} \wedge \frac{b}{d} \wedge \frac{a}{e}\right)$ for $a,b,c,d,e \in \mathbb{R}_+$ we have
\begin{align*}
\frac{\xi^2}{2v + 8HM_1+ 6M_1 \xi} &\geq \frac{ \lambda^2 + 2\eta\lambda H}{2v + 8HM_1 + 6M_1\lambda + 6M_1 \eta H}\\
&\geq \frac{1}{3}\left(\frac{\lambda^2}{2v}\wedge \frac{\lambda\eta}{M_1\left(4 + 3\eta\right)} \wedge \frac{\lambda^2}{6M_1\lambda}\right)\\
&\geq \frac{1}{3}\left(\frac{\lambda^2}{2v}\wedge \frac{(\eta\wedge 1)\lambda}{7M_1}\right),
\end{align*}
using that $\frac{\eta}{4+3\eta} \geq \frac{\eta \wedge 1}{7}$.
With this inequality and Talagrand's inequality we obtain
\begin{equation}\label{eq:Talgrand_utile}
\Proba\left(\sup_{l\in\mathcal{L}} \left|\nu_n(l)\right| \geq (1+\eta)H + \lambda\right)\leq \exp\left( -\frac{n}{3}\left(\frac{\lambda^2}{2v}\wedge \frac{(\eta\wedge 1)\lambda}{7M_1}\right)\right).
\end{equation}
For $\epsilon>0$ fixed, we set $\eta = \sqrt{1+\epsilon} -1 = C(\epsilon)$, such that $\sqrt{1+\epsilon}=1+\eta$ and $\lambda= \sqrt{\epsilon H^2 +\frac{t}{2}}$ for a given $t$ and we use the fact that $\Esp(X)_+ = \int_0^{\infty} \Proba(X\geq t) \dd t$ with $X= \sup_{l\in\mathcal{L}} \left|\nu_n(l)\right|^2 - 2(1+\epsilon)H^2$. Using \eqref{eq:Talgrand_utile} we obtain

\begin{align*}
&\Esp\left( \sup_{l\in\mathcal{L}} \left|\nu_n(l)\right|^2 -   2(1+2\epsilon)H^2\right)_+ \\
&= \int_0^{\infty} \Proba\left(\sup_{l\in\mathcal{L}} \left|\nu_n(l)\right|^2 - 2(1+2\epsilon)H^2 \geq t \right) \dd t \\
&= \int_0^{\infty} \Proba\left(\sup_{l\in\mathcal{L}} \left|\nu_n(l)\right|^2 \geq 2(1+2\epsilon)H^2 + t\right) \dd t \\
&= \int_0^{\infty} \Proba\left(\sup_{l\in\mathcal{L}} \left|\nu_n(l)\right| \geq \sqrt{2(1+2\epsilon)H^2 + t} \right) \dd t \\
&= \int_0^{\infty} \Proba\left(\sup_{l\in\mathcal{L}} \left|\nu_n(l)\right| \geq \sqrt{2(1+\epsilon)H^2 + t + 2\epsilon H^2} \right) \dd t \\
&\leq \int_0^{\infty} \Proba\left(\sup_{l\in\mathcal{L}} \left|\nu_n(l)\right| \geq \sqrt{(1+\epsilon)}H + \sqrt{\frac{t}{2} + \epsilon H^2} \right) \dd t \\
&\leq \int_{0}^{\infty} \exp\left( -\frac{n}{3}\left( \frac{\epsilon H^2 +\frac{t}{2}}{2v} \wedge \frac{C(\epsilon)\sqrt{\epsilon H^2 +\frac{t}{2}}}{7M_1}\right) \right) \dd t \\
&\leq e^{-\frac{n\epsilon H^2}{6v}}\int_{0}^{\infty} e^{-\frac{nt}{12v}} \dd t + e^{-\frac{nC(\epsilon)\sqrt{\epsilon}H}{21\sqrt{2}M_1}}\int_0^{\infty}e^{-\frac{nC(\epsilon)\sqrt{t}}{42M_1}}\dd t \\
&\leq  e^{-K_1\frac{n\epsilon H^2}{v}} \cdot 12\frac{v}{n} + e^{-K_2\frac{nC(\epsilon)\sqrt{\epsilon}H}{M_1}} \cdot 2\left(\frac{42M_1}{nC(\epsilon)}\right)^2\\
&\leq C\left(\frac{v}{n}e^{-K_1\frac{n\epsilon H^2}{v}} + \frac{M_1^2}{n^2 C(\epsilon)^2}e^{-K_2\frac{nC(\epsilon)\sqrt{\epsilon}H}{M_1}} \right),
\end{align*}
with $C = 2*42^2 = 3528$ we obtain the announced result.

\subsection{First adaptive property}\label{Proof Penalisation_première}
We can prove the following adaptive property for our estimator:
\begin{proposition}\label{Penalisation_première}
Suppose the spaces $(S_m)_{m\in \mathcal{M}_n}$ are nested, i.e for $m'>m \in \mathcal{M}_n, S_m \subset S_{m'}$ and $\|\psi\|_\infty < +\infty$. Then for $\hat{m}$ defined as
\[ \hat{m} = \argmin{m\in \mathcal{M}_n} \left( \gamma_n(\hat{\psi}_m) + \text{pen}(m) \right),\]
and 
\[ \text{pen}(m) \geq \kappa \Phi_0^2 \frac{D_m}{n} \Esp(\Delta),\]
with $\kappa$ universal constant ($\kappa > 4$ works), we obtain the following inequality for the MISE of $\hat{\psi}_{\hat{m}}$:
\[ \Esp\left( \| \hat{\psi}_{\hat{m}} - \psi \|_2^2 \right) \leq  \breve{C}\inf_{m\in \mathcal{M}_n} \left(\|\psi - \psi_m\|_2^2 + \text{pen}(m) \right) + \frac{\mathring{C}}{n},\]
where $\breve{C}$ depends only on $\kappa$ and $\mathring{C}$ is a constant that depend on $\kappa$, $\Phi_0$ and $\|\psi\|_\infty$.
\end{proposition}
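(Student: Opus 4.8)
The plan is to carry out the classical penalized--projection (model selection) argument, using Talagrand's inequality in the form of Lemma~\ref{Talagrand} as the concentration tool. First I would introduce the centred empirical process attached to the contrast,
\[ \nu_n(t) := \frac1n\sum_{i=1}^n \Delta_i\, t(X_i') - \langle t,\psi\rangle, \qquad t\in\mathbb{L}^2(\cercle), \]
so that \eqref{eq:Esperance_et_psi} yields $\gamma_n(t) = \|t-\psi\|_2^2 - \|\psi\|_2^2 - 2\nu_n(t)$ and $\Esp\nu_n(t)=0$. Writing this identity for $t=\hat\psi_{\hat m}$ and $t=\hat\psi_m$ and inserting the defining inequality $\gamma_n(\hat\psi_{\hat m})+\text{pen}(\hat m)\le \gamma_n(\hat\psi_m)+\text{pen}(m)$, valid for an arbitrary fixed $m\in\mathcal{M}_n$, gives
\[ \|\hat\psi_{\hat m}-\psi\|_2^2 \;\le\; \|\psi-\hat\psi_m\|_2^2 + 2\nu_n(\hat\psi_{\hat m}-\hat\psi_m) + \text{pen}(m) - \text{pen}(\hat m). \]
Taking expectations, $\Esp\|\psi-\hat\psi_m\|_2^2$ is controlled by Proposition~\ref{MISE de psi chapeau}, and its variance part $\Phi_0^2 D_m\Esp(\Delta)/n$ merges with $\text{pen}(m)$ since $\text{pen}(m)\ge\kappa\Phi_0^2 D_m\Esp(\Delta)/n$; as $m$ is arbitrary, this is where the $\inf_{m\in\mathcal{M}_n}$ of the statement comes from. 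Everything then reduces to bounding the cross term.

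To handle $2\nu_n(\hat\psi_{\hat m}-\hat\psi_m)$ I would use that the spaces are nested, so $\hat\psi_{\hat m}-\hat\psi_m\in S_{m\vee\hat m}$. With $B_{m,m'}:=\{t\in S_{m\vee m'}:\|t\|_2=1\}$ and Young's inequality,
\[ 2\nu_n(\hat\psi_{\hat m}-\hat\psi_m)\;\le\;\theta\,\|\hat\psi_{\hat m}-\hat\psi_m\|_2^2+\theta^{-1}\sup_{t\in B_{m,\hat m}}\nu_n(t)^2, \]
and since $\|\hat\psi_{\hat m}-\hat\psi_m\|_2^2\le 2\|\hat\psi_{\hat m}-\psi\|_2^2+2\|\psi-\hat\psi_m\|_2^2$, choosing $\theta<1/2$ lets the $\|\hat\psi_{\hat m}-\psi\|_2^2$ contribution be absorbed into the left-hand side. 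Because $\hat m$ is data-driven I would then dominate the random supremum deterministically: with $p(m,m'):=2(1+2\epsilon)\Phi_0^2\frac{D_{m\vee m'}}{n}\Esp(\Delta)$ for a small fixed $\epsilon>0$,
\[ \sup_{t\in B_{m,\hat m}}\nu_n(t)^2\;\le\;\sum_{m'\in\mathcal{M}_n}\Big(\sup_{t\in B_{m,m'}}\nu_n(t)^2-p(m,m')\Big)_++p(m,\hat m). \]
Using $D_{m\vee\hat m}\le D_m+D_{\hat m}$, the surviving term $\theta^{-1}p(m,\hat m)$ splits into a piece bounded by a multiple of $\text{pen}(m)$ plus a piece $\le\theta^{-1}2(1+2\epsilon)\Phi_0^2\frac{D_{\hat m}}{n}\Esp(\Delta)$ that is absorbed by $-\text{pen}(\hat m)$ as soon as $\kappa\ge 2(1+2\epsilon)/\theta$; letting $\theta\uparrow\tfrac12$ and $\epsilon\downarrow 0$ yields exactly the announced threshold $\kappa>4$.

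It then remains to bound each $\Esp\big(\sup_{t\in B_{m,m'}}\nu_n(t)^2-p(m,m')\big)_+$, which is where Lemma~\ref{Talagrand} enters, applied to the class $\{(x',\delta)\mapsto\delta\,t(x'):t\in B_{m,m'}\}$ (rendered countable by separability). Its inputs are read off from the sieve properties: $M_1=\sup_{t\in B_{m,m'}}\|t\|_\infty\le\Phi_0\sqrt{D_{m\vee m'}}$ by \eqref{eq:Linear_Sieves}; $v=\sup_{t\in B_{m,m'}}\Var(\Delta t(X'))\le\sup_t\langle t^2,\psi\rangle\le\|\psi\|_\infty$, which is the only use of the hypothesis $\|\psi\|_\infty<+\infty$; and, since $\sup_{t\in B_{m,m'}}\nu_n(t)^2=\sum_\lambda\nu_n(\varphi_\lambda)^2$ for an orthonormal basis $(\varphi_\lambda)$ of $S_{m\vee m'}$,
\[ \Esp\Big(\sup_{t\in B_{m,m'}}\nu_n(t)^2\Big)=\frac1n\sum_\lambda\Var(\Delta\varphi_\lambda(X'))\le\frac1n\,\Esp(\Delta)\,\Big\|\sum_\lambda\varphi_\lambda^2\Big\|_\infty\le\Phi_0^2\frac{D_{m\vee m'}}{n}\Esp(\Delta)=:H^2 \]
by \eqref{eq:Linear_Sieves2}, so $p(m,m')=2(1+2\epsilon)H^2$ is precisely the threshold the lemma subtracts. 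The lemma then gives a bound of the shape $C\big(\frac vn e^{-K_1\epsilon nH^2/v}+\frac{M_1^2}{n^2 C(\epsilon)^2}e^{-K_2 C(\epsilon)\sqrt\epsilon\,nH/M_1}\big)$; summing over $m'\in\mathcal{M}_n$ — exploiting that $nH/M_1=\sqrt{n\Esp(\Delta)}$ is independent of $m'$, that $nH^2/v$ is proportional to $D_{m\vee m'}$, and the at-least-linear growth of $m\mapsto D_m$ to make the series geometric — leaves a remainder of order $1/n$, which is the $\mathring C/n$ term. Collecting the estimates, taking expectations and dividing by $1-2\theta$ produces the oracle inequality.

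I expect the main obstacle to be exactly this last bookkeeping: arranging the sum over $m'$ of the two Talagrand residuals to be genuinely $O(1/n)$ with a well-controlled constant (i.e.\ preventing the exponents from degenerating) while simultaneously keeping the penalty-constant threshold down to $\kappa>4$ through a tight use of Young's inequality and of the cancellation against $\text{pen}(\hat m)$.
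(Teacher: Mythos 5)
Your proposal follows essentially the same route as the paper: the oracle inequality is deduced from the defining inequality for $\hat{m}$, a Young-type split of the cross term into a norm piece (absorbed via a $\theta<1/2$ parameter) and a supremum of a centred process over a unit ball of $S_{m\vee m'}$, a subtraction of the threshold $p(m,m')=2(1+2\epsilon)H^2$ with $H,v,M_1$ read off from the sieve properties, Talagrand's inequality in the form of Lemma~\ref{Talagrand}, and a sum over $m'\in\mathcal{M}_n$. The only differences are cosmetic: you compare $\gamma_n(\hat{\psi}_{\hat{m}})$ to $\gamma_n(\hat{\psi}_m)$ (invoking Proposition~\ref{MISE de psi chapeau} for its expectation) whereas the paper compares to $\gamma_n(\psi_m)$ directly, and you set $p$ with $D_{m\vee m'}$ whereas the paper uses $D_m+D_{m'}$; the latter spares you the side condition $D_m\gtrsim m$ you implicitly need to make the sum over $m'\le m$ be $O(1/n)$, but both are valid for the sieves at hand.
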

First we need the following lemma:
\begin{lemma}\label{Lemme_de_Talagrand}
Let the set $B_{m,m'}(0,1) := \{ t\in S_m + S_{m'}, \|t\|_2=1 \}$. Writing $l_t(z)=l_t(x,\delta) := \delta t(x)$ with $t$ in $B_{m,m'}(0,1)$ and $Z_i = (X_i,\Delta_i)$, we define the following empirical process,
\[ \nu_n(l_t) = \frac{1}{n} \sum_{i=1}^n \left( \Delta_i t(X_i) - \Esp(\Delta_i t(X_i)) \right).\]
We have the following inequality for every $\epsilon > 0$:
\[ \Esp \left( \sup_{t\in B_{m,m'}(0,1)} \nu_n^2(l_t) - \text{p}(m,m') \right)_+ \leq \frac{\kappa_1}{n} \left( e^{-\kappa_2 \epsilon (D_m + D_{m'})} + \frac{e^{-\kappa_3 C(\epsilon)\sqrt{\epsilon}\sqrt{n}}}{C(\epsilon)^2} \right),\]
with $\text{p}(m,m') = 2(1+2\epsilon)\frac{\Phi_0^2(D_m + D_{m'})}{n}\Esp(\Delta)$ and $C(\epsilon) = (\sqrt{1+\epsilon} - 1) \wedge 1$. The constants $\kappa_1,\kappa_2,\kappa_3$ depend on $\Phi_0$ and $\psi$. 
\end{lemma}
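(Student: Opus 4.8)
The plan is to apply the second (maximal-type) inequality of Lemma~\ref{Talagrand} to the empirical process $\nu_n(l_t)$ indexed by the unit ball $B_{m,m'}(0,1)$ of $S_m + S_{m'}$, after a standard reduction to a countable index set. First I would note that $S_m + S_{m'}$ is finite dimensional with dimension at most $D_m + D_{m'}$ (using that the spaces are nested, $S_m+S_{m'}$ is just the larger of the two, but bounding by $D_m+D_{m'}$ is harmless and matches the statement), and that $\nu_n(l_t)$ is linear and continuous in $t$, so its supremum over $B_{m,m'}(0,1)$ equals the supremum over a countable dense subset; hence Lemma~\ref{Talagrand} applies with $Z_i=(X_i,\Delta_i)$ and $l_t(x,\delta)=\delta t(x)$. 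The core of the proof is then to compute admissible values of the three quantities $M_1$, $H$, $v$ appearing in Talagrand's inequality.

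For $M_1$: since $t\in S_m+S_{m'}$ has $\|t\|_2=1$, the sieve property \eqref{eq:Linear_Sieves} gives $\|t\|_\infty \leq \Phi_0\sqrt{D_m+D_{m'}} \leq \Phi_0\sqrt{2n}$ (as $D_m,D_{m'}\leq n$), and $|l_t(x,\delta)|=|\delta t(x)|\leq \|t\|_\infty$, so one may take $M_1 = \Phi_0\sqrt{D_m+D_{m'}}$ (or the cruder $\Phi_0\sqrt{2n}$ when a bound independent of the models is wanted). For $v$: $\frac1n\sum_i \Var(l_t(Z_i)) \leq \Esp(\Delta t^2(X)) \leq \|t\|_2^2\,\|\psi\|_\infty$ via \eqref{eq:Esperance_et_psi} applied to $t^2$, wait — more directly $\Esp(\Delta t(X)^2)=\langle t^2,\psi\rangle\leq \|\psi\|_\infty\|t\|_2^2=\|\psi\|_\infty$; alternatively one can bound $\Esp(\Delta t^2(X))\leq \Esp(\Delta)\|t\|_\infty^2$ but the first bound, $v=\|\psi\|_\infty$, is the clean one and explains why $\kappa_2,\kappa_3$ depend on $\psi$. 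For $H$: one needs $\Esp(\sup_{t} |\nu_n(l_t)|) \leq H$ with $H^2$ of order $\Phi_0^2(D_m+D_{m'})\Esp(\Delta)/n$; expanding in an orthonormal basis $(\varphi_\lambda)$ of $S_m+S_{m'}$, $\sup_{\|t\|_2=1}\nu_n^2(l_t) = \sum_\lambda \nu_n^2(l_{\varphi_\lambda})$, so $\Esp(\sup_t \nu_n^2(l_t)) = \sum_\lambda \Var(\frac1n\sum_i \Delta_i\varphi_\lambda(X_i)) = \frac1n\sum_\lambda \Var(\Delta\varphi_\lambda(X)) \leq \frac1n \Esp(\Delta\sum_\lambda\varphi_\lambda^2) \leq \frac{\Phi_0^2(D_m+D_{m'})\Esp(\Delta)}{n}$ by \eqref{eq:Linear_Sieves2}; then $H := \bigl(\Phi_0^2(D_m+D_{m'})\Esp(\Delta)/n\bigr)^{1/2}$ works by Jensen.

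With these choices, $2(1+2\epsilon)H^2 = \text{p}(m,m')$ exactly, and plugging $M_1$, $H$, $v$ into the second inequality of Lemma~\ref{Talagrand} gives
\[
\Esp\Bigl(\sup_{t\in B_{m,m'}(0,1)}\nu_n^2(l_t) - \text{p}(m,m')\Bigr)_+ \leq C\Bigl(\frac{v}{n}e^{-K_1\epsilon \frac{nH^2}{v}} + \frac{M_1^2}{n^2 C(\epsilon)^2}e^{-K_2 C(\epsilon)\sqrt\epsilon \frac{nH}{M_1}}\Bigr).
\]
It remains to simplify the exponents into the stated form. In the first exponent, $\frac{nH^2}{v} = \frac{\Phi_0^2(D_m+D_{m'})\Esp(\Delta)}{\|\psi\|_\infty}$, which is $\gtrsim (D_m+D_{m'})$ up to a constant depending on $\Phi_0,\psi$ — but one must be careful if $\Esp(\Delta)$ can be small; here one absorbs it into $\kappa_2$ or, if $\Esp(\Delta)$ is not bounded below, notes that the case $\Esp(\Delta)$ small makes $\text{p}(m,m')$ and $H$ small and the bound still holds with a harmless constant (this is the one subtlety to handle cleanly). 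In the second exponent, $\frac{nH}{M_1} = \sqrt{n}\cdot\sqrt{\Esp(\Delta)} \cdot$(a constant), giving $e^{-\kappa_3 C(\epsilon)\sqrt\epsilon\sqrt n}$, and $\frac{M_1^2}{n^2} = \frac{\Phi_0^2(D_m+D_{m'})}{n^2}\leq \frac{2\Phi_0^2}{n}$, which absorbs into $\kappa_1/n$. Collecting the $\Phi_0$- and $\|\psi\|_\infty$-dependent factors into $\kappa_1,\kappa_2,\kappa_3$ yields the claimed inequality. The main obstacle is bookkeeping the dependence on $\Esp(\Delta)$ in the exponents consistently (ensuring it does not spoil the claimed geometric decay when it is small), together with the routine but careful reduction to a countable class so that Lemma~\ref{Talagrand} is literally applicable.
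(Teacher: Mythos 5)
Your proof follows essentially the same route as the paper: you apply the second inequality of Lemma~\ref{Talagrand} with the same three bounds $M_1 = \Phi_0\sqrt{D_m+D_{m'}}$, $v = \|\psi\|_\infty$, and $H^2 = \Phi_0^2(D_m+D_{m'})\Esp(\Delta)/n$, the last obtained by expanding $\sup_t \nu_n^2(l_t)$ in an orthonormal basis exactly as the paper does, so that $2(1+2\epsilon)H^2 = \text{p}(m,m')$. The one worry you flag about $\Esp(\Delta)$ being small is already discharged by the statement itself, which permits $\kappa_2,\kappa_3$ to depend on $\psi$, hence on $\Esp(\Delta)=\int_{\cercle}\psi$, which is a fixed positive quantity for a given model.
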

We shall prove this lemma.
\begin{proof}
This lemma is an application of Talagrand's inequality reminded in \\Lemma~\ref{Talagrand}.
To use it we need to find three upper bounds:
\[ \sup_{l\in\mathcal{L}} \| l\|_{\infty} \leq M_1, \hspace{40pt} \sup_{l\in \mathcal{L}} \Var(l(Z_1)) \leq v, \hspace{40pt}  \Esp\left( \sup_{l\in\mathcal{L}} \left|\nu_n(l)\right| \right) \leq H, \]
where, thanks to usual density arguments, instead of a countable class, we are allowed to take a unit ball in a finite functional space. So we set $\mathcal{L} = \{l_t, t \in B_{m,m'}(0,1)\}$ and its elements are $ l_t(x,\delta) := \delta t(x)$ and the random variables are $Z_i = (X_i, \Delta_i)$ for $i\in \{ 1, \dots , n \}$. Since our spaces are nested we know that $S_m + S_{m'} = S_{\max(m,m')}$ and thus if we write $D(m,m')$ the dimension of $S_m + S_{m'}$ we have $D(m,m') = D_{m \vee m'} \leq D_m + D_{m'}$ where $a\vee b = \max(a,b)$.
First we have
\begin{align*}
\sup_{l\in\mathcal{L}} \| l\|_{\infty} &= \sup_{t\in B_{m,m'}(0,1)} \| l_t\|_{\infty}\leq \sup_{t\in B_{m,m'}(0,1)} \| t \|_{\infty}\\
& \leq\sup_{t\in B_{m,m'}(0,1)} \Phi_0\sqrt{D(m,m')} \|t\|_2 \leq \Phi_0\sqrt{D_m + D_{m'}} =: M_1 ,
\end{align*}
using \eqref{eq:Linear_Sieves}.
Then we have
\begin{align*}
\sup_{l\in \mathcal{L}} \Var(l(Z_1)) &= \sup_{t\in B_{m,m'}(0,1)} \Var(l_t(X_1,\Delta_1))\leq \sup_{t\in B_{m,m'}(0,1)} \Var(\Delta_1t(X_1))\\
& \leq \sup_{t\in B_{m,m'}(0,1)} \Esp\left(\Delta_1 t^2(X_1)\right)\\
&\leq \sup_{t\in B_{m,m'}(0,1)} \Inte\Inte\Inte \mathds{1}_{\{x\in [l,u]\}} t^2(x)f(x) \Proba_{(L,U)}(\dd l, \dd u) \dd x \\
&\leq \sup_{t\in B_{m,m'}(0,1)} \Inte t^2(x)f(x)\sigma(x) \dd x \leq \sup_{t\in B_{m,m'}(0,1)} \Inte t^2(x) \psi(x)\dd x \\
&\leq \|\psi\|_{\infty} =: v .
\end{align*}
We notice $\nu_n(l_t) = \frac{1}{n}\sum_{i=1}^n \left(\Delta_i t(X_i) - \Esp(\Delta_i t(X_i)\right) = <t , \hat{\psi}_{m \vee m'} - \psi_{m \vee m'}>$. Moreover,  if we write $\Lambda_{m,m'}$ the indexes of a basis of $S_m+S_{m'} = S_{m\vee m'}$, we know that 
\begin{align*}
 \sum_{\lambda \in \Lambda_{m,m'}} \nu_n^2(l_{\philambda}) =\|\hat{\psi}_{m \vee m'} - \psi_{m \vee m'}\|_2^2  &= \sup_{t\in B_{m,m'}(0,1)} \left|<t,\hat{\psi}_{m \vee m'} - \psi_{m \vee m'}>\right|^2 \\
 &=  \sup_{t\in B_{m,m'}(0,1)}  \left|\nu_n(l_t)\right|^2,
\end{align*}
 and 
 \[\Esp\left(\sum_{\lambda \in \Lambda_{m,m'}} \nu_n^2(l_{\philambda})\right) = \sum_{\lambda \in \Lambda_{m,m'}} \Var\left(\frac{1}{n}\sum_{i=1}^n\Delta_i \philambda(X_i)\right).\] Using those two equalities and \eqref{eq:Linear_Sieves2} we have
\begin{align*}
\Esp\left( \sup_{l\in\mathcal{L}} \left|\nu_n(l)\right|^2 \right) &= \Esp\left( \sup_{t\in B_{m,m'}(0,1)} \left|\nu_n(l_t)\right|^2 \right) = \frac{1}{n} \sum_{\lambda \in \Lambda_{m,m'}} \Var(\Delta_1\philambda(X_1))\\ 
& \leq \frac{1}{n} \sum_{\lambda \in \Lambda_{m,m'}} \Esp\left( \Delta_1\philambda^2(X_1)\right)\leq \frac{1}{n} \sum_{\lambda \in \Lambda_{m,m'}} \Inte \philambda^2(x) \psi(x) \dd x \\
&\leq \frac{1}{n} \Inte \| \sum_{\lambda \in \Lambda_{m,m'}} \philambda^2\|_{\infty} \psi(x) \dd x \leq \frac{1}{n} \Inte \Phi_0^2 D(m,m') \psi(x) \dd x \\
& \leq \frac{\Phi_0^2 (D_m + D_{m'})}{n}\Esp(\Delta) =: H^2.
\end{align*}
If we set $\text{p}(m,m') = 2(1+2\epsilon)\Phi_0^2\frac{(D_m + D_{m'})}{n}\Esp(\Delta)$ then by Talagrand's inequality recalled in Lemma~\ref{Talagrand} we have the desired inequality.
\end{proof}

Let us now prove the Proposition~\ref{Penalisation_première} itself.\\
Set $m\in \mathcal{M}_n$.
By definition of $\hat{m}$ and $\hat{\psi}_m$ we know that
\begin{equation}\label{eq:inegalite_gamma_et_hat_m}
\gamma_n(\hat{\psi}_{\hat{m}}) + \text{pen}(\hat{m}) \leq \gamma_n(\hat{\psi}_{m}) + \text{pen}(m) \leq \gamma_n(\psi_{m}) + \text{pen}(m).
\end{equation}
There is a relation between the process $\nu_n$ defined in Lemma~\ref{Lemme_de_Talagrand} and the contrast $\gamma_n$. Indeed using~\eqref{eq:Esperance_et_psi}, if $t$ and $s$ are two function then
\begin{align*}
\gamma_n(t) - \gamma_n(s) &= \| t\|_2^2 - \|s\|_2^2 -\frac{2}{n} \sum_{i=1}^n \left(\Delta_i t(X_i) - \Delta_i s(X_i) \right)\\
&= \|t\|^2_2 - 2<t,\psi> + \|\psi\|^2_2 \\
& -\|s\|^2_2 + 2<s,\psi> - \|\psi\|^2_2 \\
& - \frac{2}{n} \sum_{i=1}^n \left( \Delta_i (t-s)(X_i) - <t-s, \psi> \right)\\
&= \|t-\psi\|_2^2 - \|s-\psi\|_2^2 -\frac{2}{n}\sum_{i=1}^n (l_{t-s}(X_i,\Delta_i) - \Esp(l_{t-s}(X_i,\Delta_i)))\\
&= \|t-\psi\|_2^2 - \|s-\psi\|_2^2 -2\nu_n(l_{t-s}).
\end{align*}
Using this decomposition on inequality \eqref{eq:inegalite_gamma_et_hat_m} we obtain
\[ \text{pen}(\hat{m}) - \text{pen}(m) \leq \gamma_n(\psi_m) - \gamma_n(\hat{\psi}_{\hat{m}}) = \|\psi_m - \psi \|_2^2 - \|\hat{\psi}_{\hat{m}} - \psi\|_2^2 - 2 \nu_n(l_{\psi_m - \hat{\psi}_{\hat{m}}}). \]
Using this, and the fact that $t\mapsto \nu_n(t)$ is linear, we have the following inequality:
\begin{gather}
\nonumber\|\hat{\psi}_{\hat{m}} - \psi\|_2^2  \leq \text{pen}(m) - \text{pen}(\hat{m}) + \| \psi_m - \psi\|_2^2 + 2\nu_n\left(l_{\hat{\psi}_{\hat{m}} - \psi_m}\right) \\
\nonumber\leq \text{pen}(m) - \text{pen}(\hat{m}) + \| \psi_m - \psi\|_2^2 + 2\|\hat{\psi}_{\hat{m}} - \psi_m \|_2 \left(\sup_{t\in B_{m,\hat{m}}(0,1)}\nu_n(l_t)\right)\\
\label{eq:intermediaire} \leq \text{pen}(m) - \text{pen}(\hat{m}) + \| \psi_m - \psi\|_2^2 + x^{-1}\|\hat{\psi}_{\hat{m}} - \psi_m \|_2^2 + x\left(\sup_{t\in B_{m,\hat{m}}(0,1)}\nu_n^2(l_t)\right),
\end{gather}
where $B_{m,\hat{m}}(0,1) = \left\{t\in S_m + S_{\hat{m}}| \|t\|_2 \leq 1  \right\}$,  and we use that $\forall (a,b)\in\mathbb{R}_+^2, \forall x>0, 2ab \leq x^{-1}a^2 + xb^2$. We know that for every $y$ positive, $$\| \hat{\psi}_{\hat{m}} -\psi_m \|_2^2 \leq (1 + y^{-1}) \| \hat{\psi}_{\hat{m}} - \psi \|_2^2 + (1+y) \| \psi_m - \psi\|_2^2.$$ We take $y=C_x := \frac{x+1}{x-1}$ with $x>1$ such that $C_x >0$ and $1- x^{-1}\left(1+C_x^{-1}\right) = C_x^{-1}$ and $1+ x^{-1}(1+C_x) = C_x$. Thus using this inequality on \eqref{eq:intermediaire} we obtain
\begin{equation}\label{eq:intermediaire2}
C_x^{-1}\|\hat{\psi}_{\hat{m}} - \psi\|_2^2 \leq \text{pen}(m) - \text{pen}(\hat{m})+ C_x\|\psi_{m} - \psi \|_2^2 + x\sup_{t\in B_{m,\hat{m}}(0,1)}\nu_n^2(l_t) .\\
\end{equation}
We set a function $\text{p}(\cdot,\cdot)$ such that, for all $m,m'$ in $\mathcal{M}_n$,
\begin{equation}\label{eq:Inegalité_pen_p}
 x \text{p}(m,m') \leq \text{pen}(m) + \text{pen}(m').
\end{equation}
With this in mind we come back to \eqref{eq:intermediaire2}
\begin{align}
\nonumber C_x^{-1}\|\hat{\psi}_{\hat{m}} - \psi\|_2^2 &\leq \text{pen}(m) - \text{pen}(\hat{m})+ C_x\|\psi_{m} - \psi \|_2^2 + x\left(\sup_{t\in B_{m,\hat{m}}(0,1)}\nu_n^2(l_t)\right) \\
\nonumber &\leq \text{pen}(m) - \text{pen}(\hat{m})+ C_x\|\psi_{m} - \psi \|_2^2 \\
\nonumber &+ x \text{p}(m,\hat{m})+ x\left(\sup_{t\in B_{m,\hat{m}}(0,1)}\nu_n^2(l_t) - \text{p}(m,\hat{m}) \right) \\
\nonumber &\leq 2\text{pen}(m) + C_x\|\psi_{m} - \psi \|_2^2+ x\left(\sup_{t\in B_{m,\hat{m}}(0,1)}\nu_n^2(l_t) - \text{p}(m,\hat{m}) \right).
\end{align}
So in the end we obtain
\begin{equation}\label{eq:adpatation_quasi_finie}
\|\hat{\psi}_{\hat{m}} - \psi\|_2^2 \leq C_x^2 \|\psi_{m} - \psi \|_2^2+ 2C_x\text{pen}(m)+ xC_x\left(\sup_{t\in B_{m,\hat{m}}(0,1)}\nu_n^2(l_t) - \text{p}(m,\hat{m}) \right) .
\end{equation}
To end the proof we have to show that
\begin{align}
\nonumber\Esp\left(\sup_{t\in B_{m,\hat{m}}(0,1)}\nu_n^2(l_t) - \text{p}(m,\hat{m})\right)_+ &\leq \sum_{m'\in\mathcal{M}_n} \Esp\left( \sup_{t\in B_{m,m'}(0,1)}\nu_n^2(l_t) - \text{p}(m,m') \right)_+ \\
\label{eq:besoin_du_lemme}&\leq \frac{C_1}{n}.
\end{align}
To prove this upper bound we use Lemma~\ref{Lemme_de_Talagrand}.
\begin{eqnarray*}
&&\sum_{m'\in\mathcal{M}_n} \Esp\left( \sup_{t\in B_{m,m'}(0,1)}\nu_n^2(l_t) - \text{p}(m,m') \right)_+ \\
&\leq& \sum_{m'\in\mathcal{M}_n}\frac{\kappa_1}{n} \left( e^{-\kappa_2\epsilon (D_m + D_{m'})} + \frac{e^{-\kappa_3 C(\epsilon)\sqrt{\epsilon}\sqrt{n}}}{C(\epsilon)^2} \right) \\
&\leq& \frac{\kappa_1}{n} \left( \sum_{m'\in\mathcal{M}_n}e^{-\kappa_2\epsilon (D_m + D_{m'})} \right) + \frac{\kappa_1 |\mathcal{M}_n|e^{-\kappa_3 C(\epsilon)\sqrt{\epsilon}\sqrt{n}}}{nC(\epsilon)^2} \\
&\leq& \left(\frac{\kappa_1 e^{-\kappa_2\epsilon D_m}}{n} \underbrace{\sum_{k=0}^n e^{-\kappa_2 \epsilon k}}_{< + \infty}\right) + \frac{\kappa_1 e^{-\kappa_3 C(\epsilon)\sqrt{\epsilon}\sqrt{n}}}{C(\epsilon)^2} \leq \frac{C_1}{n},
\end{eqnarray*}
for a constant $C_1>0$. Here we choose $\epsilon=\frac{1}{2}$ and recall that $|\mathcal{M}_n|\leq n$. 
Moreover we have that $\text{p}(m,\hat{m}) =4\Phi_0^2\frac{(D_m + D_{\hat{m}})}{n}\Esp(\Delta) $. So if we take $\kappa  = 4x >4$ we have
\[ \text{pen}(m) \geq \kappa\Phi_0^2 \frac{D_m}{n}\Esp(\Delta), \]
we will have inequality~\eqref{eq:Inegalité_pen_p} verified.\\
So, if we write $C_\kappa = \frac{\kappa + 4}{\kappa - 4}$, we obtain that for all $m\in\mathcal{M}_n$,
\begin{align*}
\Esp\left(\|\hat{\psi}_{\hat{m}} - \psi\|_2^2\right) &\leq C_{\kappa}^2 \|\psi_{m} - \psi \|_2^2+ 2C_{\kappa}\text{pen}(m)+ \frac{\kappa C_1 C_{\kappa}}{4n}\\
&\leq \breve{C} \left(  \|\psi_{m} - \psi \|_2^2 + \text{pen}(m) \right) + \frac{\mathring{C}}{n},
\end{align*}
where $\breve{C}$ depends on $\kappa$ and $\mathring{C}$ depends on $\kappa$, $\Phi_0$ and $\|\psi\|_\infty$. This inequality holds for all $m\in \mathcal{M}_n$.\\
Thus we have the wanted inequality 
\[ \Esp\left( \| \hat{\psi}_{\hat{m}} - \psi \|_2^2 \right) \leq  \breve{C}\inf_{m\in \mathcal{M}_n} \left(\|\psi - \psi_m\|_2^2 + \text{pen}(m) \right) + \frac{\mathring{C}}{n}.\]

\subsection{Proof of Theorem~\ref{Penalisation_aleatoire}} \label{Proof Penalisation_aleatoire}
To prove this theorem we use Proposition~\ref{Penalisation_première} and some ideas of its proof (see Section~\ref{Proof Penalisation_première}).\\
We set the following random set, for $b\in ]0, 1[$,
\[ S_b = \left\{\omega\in\Omega, \left| \frac{\frac{1}{n} \sum_{i=1}^n \Delta_i}{ \Esp(\Delta)} - 1 \right| < b\right\} .\]
On $S_b$ the following inequalities are true
\[ \frac{1}{n}\sum_{i=1}^n \Delta_i  < (b+1)\Esp(\Delta), \hspace{10pt}\Esp(\Delta)< \frac{1}{(1-b)n}\sum_{i=1}^n \Delta_i.  \]
With those inequalities we can follow the arguments and computations of the proof of Proposition~\ref{Penalisation_première}.
So with a penalization term defined as $ \widehat{\text{pen}}(m) = 2x(1+2\epsilon)\Phi_0^2\frac{D_m}{n} \frac{1}{(1-b)n}\sum_{i=1}^n \Delta_i $, for $\epsilon>0$, $x>1$ and $C_x = \frac{x+1}{x-1}$, using \eqref{eq:adpatation_quasi_finie} and \eqref{eq:besoin_du_lemme}, we would obtain the following, for all $m\in\mathcal{M}_n$:
\[ \Esp\left(\|\hat{\psi}_{\hat{m}} - \psi\|_2^2 \mathds{1}_{S_b}\right) \leq C_x^2 \|\psi_m - \psi\|_2^2 +\frac{2C_x \Phi_0^2}{(1-b)}\frac{D_m}{n}\Esp(\Delta) + \frac{Q}{n}.\]
where $Q$ depends on $\kappa, \Phi_0$ and $\|\psi\|_\infty$.\\
So all there is to do to obtain our inequality is to prove that
\[ \Esp\left(\|\hat{\psi}_{\hat{m}} - \psi\|_2^2\mathds{1}_{S_b^c}\right) \leq \frac{Q'}{n}. \]
Using \eqref{eq:Ineg9.9} we know that
\[ \|\hat{\psi}_{\hat{m}} - \psi\|_2^2 \leq 2\left( \|\hat{\psi}_{\hat{m}}\|_2^2 +  \|\psi\|_2^2\right) \leq 2\left(\Phi_0^2 D_{\hat{m}} + \|\psi\|_2^2\right) \leq 2\left(\Phi_0^2 n + \|\psi\|_2^2\right).\]
Thus we have
\[\Esp\left( \|\hat{\psi}_{\hat{m}}- \psi\|_2^2\mathds{1}_{S_b^c}\right) \leq 2\left(\Phi_0^2 n + \|\psi\|_2^2\right)\Proba(S_b^c).\]
It remains to show that $\Proba(S_b^c) = \mathcal{O}\left(n^{-2}\right)$. For that we use Hoeffding's inequality with $B= b\Esp(\Delta)$,
\begin{align*}
\Proba(S_b^c) &= \Proba\left( \left| \frac{1}{n}\sum_{i=1}^n \Delta_i - \Esp(\Delta) \right| \geq B \right) \leq \Proba\left(\left| \sum_{i=1}^n \Delta_i - n\Esp(\Delta) \right| \geq nB\right)\\
&\leq 2\exp\left(- \frac{2(nB)^2}{\sum_{i=1}^n 1^2}\right) = 2\exp\left(-2nB^2\right) \leq 2\exp\left(-2\sigma_0^2 b^2 n\right) = \mathcal{O}\left(n^{-2}\right).
\end{align*}
So we obtain the announced upper bound
\[ \Esp\left( \|\hat{\psi}_{\hat{m}}- \psi\|_2^2\mathds{1}_{S_b^c}\right) \leq \frac{Q'}{n},\]
where $Q'$ depends on $\Phi_0,\sigma_0$ and $\|\psi\|_\infty$.\\
In the end we take $b=\epsilon = \frac{1}{2}$ thus having $\widehat{\text{pen}}(m) = 8x\Phi_0^2\frac{D_m}{n} \frac{1}{n}\sum_{i=1}^n \Delta_i $. So we take $\kappa = 8x > 8$ and with $C'_\kappa = \frac{\kappa + 8}{\kappa - 8}$ we obtain
\begin{align*}
 \Esp\left(\|\hat{\psi}_{\hat{m}}- \psi\|_2^2\right) &\leq  {C'_{\kappa}}^2 \|\psi_m - \psi\|_2^2 + 2C'_\kappa \Phi_0^2\frac{D_m}{n}\Esp(\Delta) + \frac{Q}{n} + \frac{Q'}{n}\\
&\leq C\left( \|\psi - \psi_m\|_2^2 + \Phi_0^2 \frac{D_m}{n}\Esp(\Delta) \right) + \frac{\tilde{C}}{n},
\end{align*}
where $C$ depends only on $\kappa$, and $\tilde{C}$ depends on $\kappa, \Phi_0, \sigma_0$ and $\|\psi\|_\infty$. This inequality holds for all $m\in \mathcal{M}_n$.\\
Thus we obtain the wanted inequality
\[ \Esp\left( \| \hat{\psi}_{\hat{m}} - \psi \|_2^2 \right) \leq  C\inf_{m\in \mathcal{M}_n} \left(\|\psi - \psi_m\|_2^2 + \Phi_0^2 \frac{D_m}{n}\Esp(\Delta) \right) + \frac{\tilde{C}}{n}.\]

\subsection{Proof of Proposition~\ref{Adaptation seuil}}\label{Proof Adaptation seuil}

We recall from Section~\ref{Preuve MISE estimateur seuil} we have
\begin{align*}
\Esp\left(\|\hat{f}^* - f\|_2^2\right) &=\Esp\left(\|\hat{f}^* - f\|_2^2\mathds{1}_{E}\right) + \Esp\left(\|\hat{f}^* - f\|_2^2\mathds{1}_{E^c}\right)\\
&\leq \frac{8}{\sigma_0^2}\left(\Esp\left(\|\hat{\psi}_{\hat{m}} - \psi\|_2^2\right) + \Esp\left(\|(\sigma - \hat{\sigma}) f\|_2^2\right) \right)  + \Esp\left(\|\hat{f}^* - f\|_2^2\mathds{1}_{E^c}\right)\\
&\leq \frac{8}{\sigma_0^2}\left(\Esp\left(\|\hat{\psi}_{\hat{m}} - \psi\|_2^2\right) + \frac{\|f\|_2^2}{4n}\right) +  \Esp\left(\|\hat{f}^* - f\|_2^2\mathds{1}_{E^c}\right).
\end{align*}
For the last term we use that on $E^c$ we have $\hat{\sigma}\lor n^{-1/2} \geq n^{-1/2}$ thus
\begin{align*}
\|\hat{f}^* - f\|_2^2 &\leq 2\left\|\frac{(\hat{\psi}_{\hat{m}})_+}{\hat{\sigma}\lor n^{-1/2}}\right\|_2^2 + 2\|f\|_2^2\leq 2n\|(\hat{\psi}_{\hat{m}})_+\|_2^2 + 2\|f\|_2^2\\
&\leq 2n\|\hat{\psi}_{\hat{m}}\|_2^2 + 2\|f\|_2^2 \leq 2n^2\Phi_0^2 + 2\|f\|_2^2 ,\\
\end{align*}
using \eqref{eq:Ineg9.9}.\\
And because $E^c\subset \left\{ \omega\in\Omega,\|\hat{\sigma} - \sigma\|_{\infty} \geq \frac{\sigma_0}{2} \right\}$ thanks to Lemma~\ref{Lemme} we have
\begin{align*}
\Esp\left(\|\hat{f}^* - f\|_2^2\mathds{1}_{E^c}\right) \leq \left(2n^2\Phi_0^2 + 2\|f\|_2^2\right)\Proba(E^c) &\leq \left(n^2\Phi_0^2 + 2\pi\|f\|_{\infty}^2\right)12e^{-2n\frac{\sigma_0^2}{36}}\\
&\leq \frac{C_2}{n}.
\end{align*}
Thus using Theorem~\ref{Penalisation_aleatoire} we obtain
\begin{align*}
 \Esp\left( \|\hat{f}^* - f \|_2^2 \right) &\leq  C\inf_{m\in \mathcal{M}_n} \left(\|\psi - \psi_m\|_2^2 +\Phi_0^2 \frac{D_m}{n}\Esp(\Delta) \right) + \left( \tilde{C} + \frac{\pi\|f\|_{\infty}^2}{2} + C_2\right) \frac{1}{n}\\
&\leq  K \inf_{m\in \mathcal{M}_n} \left(\|\psi - \psi_m\|_2^2 +\Phi_0^2 \frac{D_m}{n}\Esp(\Delta) \right) + \frac{\tilde{K}}{n},
\end{align*}
where $K:=C$ depends only on $\kappa$ and $\tilde{K}:= \tilde{C} + \frac{\pi\|f\|_{\infty}^2}{2} + C_2$ is a constant that depends on $\kappa, \Phi_0, \sigma_0$ and $\|f\|_\infty$. We then obtain the oracle inequality of the proposition.

\definecolor{codegreen}{rgb}{0,0.6,0}
\definecolor{codegray}{rgb}{0.5,0.5,0.5}
\definecolor{codepurple}{rgb}{0.58,0,0.82}
\definecolor{backcolour}{rgb}{0.95,0.95,0.92}

\section*{Bibliography}

\printbibliography[heading=none]

\end{document}